\DeclareMathOperator{\co}{co}
\DeclareMathOperator{\dens}{dens}
\newcommand{\cconv}{\overline{\co}}
\renewcommand{\geq}{\geqslant}
\renewcommand{\leq}{\leqslant}
\newcommand{\norm}[1]{\left\Vert#1\right\Vert}
\newcommand{\spn}{\operatorname{span}}
\newcommand{\Xss}{X^{\ast\ast}}
\newcommand{\supp}{\operatorname{supp}}
\newcommand{\iten}{\widehat{\otimes}_\varepsilon}
\newcommand{\pten}{\widehat{\otimes}_\pi}
\newcommand{\e}{\varepsilon}
\newcommand{\cl}{\mathcal}
\newcommand{\n}{\norm}
\newcommand{\bb}{\mathbb}
\newcommand{\ASQ}[1]{\text{ASQ}_{<#1}} 
\newcommand{\SQ}[1]{\text{SQ}_{<#1}} 
\newcommand{\rsSQ}[3]{\left(#1,#2\right)\text{-SQ}_{<#3}}
\newcommand{\rsSQeq}[3]{\left(#1,#2\right)\text{-SQ}_{#3}}
\newcommand{\AandSQ}[1]{\text{(A)SQ}_{<#1}}
\newtheorem{theorem}{Theorem}[section]
\newtheorem{lemma}[theorem]{Lemma}
\newtheorem{proposition}[theorem]{Proposition}
\newtheorem{corollary}[theorem]{Corollary}
\theoremstyle{definition}
\newtheorem{definition}[theorem]{Definition}
\newtheorem{example}[theorem]{Example}
\theoremstyle{remark}
\newtheorem{remark}[theorem]{Remark}
\newtheorem*{question}{Question}
\numberwithin{equation}{section}
\def\fnote#1{\footnote}
\def\ignora#1{}
\def\n3#1{\left\vert  \! \left\vert \! \left\vert \, #1 \, \right\vert \!
  \right\vert \! \right\vert }
\def\nn#1{\left\vert  \! \left\vert \! \left\vert \, #1 \, \right\vert \!
	\right\vert \! \right\vert }
\begin{document}

\title{Transfinite almost square Banach spaces}

\author[Avil\'es]{Antonio Avil\'es}
\address[Avil\'es and Rueda Zoca]{Universidad de Murcia, Departamento de Matem\'{a}ticas, Campus de Espinardo 30100 Murcia, Spain} 
\email{\texttt{avileslo@um.es}}

\author {Stefano Ciaci}
\address[Ciaci, Langemets and Lissitsin]{Institute of Mathematics and Statistics, University of Tartu, Narva mnt 18, 51009 Tartu, Estonia}
\email{stefano.ciaci@ut.ee}

\author {Johann Langemets}
\email{johann.langemets@ut.ee}
\urladdr{\url{https://johannlangemets.wordpress.com/}}

\author {Aleksei Lissitsin}

\email{aleksei.lissitsin@ut.ee}

\author[Rueda Zoca]{Abraham Rueda Zoca}
\email{\texttt{abraham.rueda@um.es}}
\urladdr{\url{https://arzenglish.wordpress.com}}

\subjclass[2020]{46B20, 46B03, 46B04, 46B26}

\keywords{$c_0(\kappa)$; almost square space, octahedral norm, renorming}

\maketitle

\markboth{AVIL\'ES, CIACI, LANGEMETS, LISSITSIN AND RUEDA ZOCA}{TRANSFINITE ALMOST SQUARE BANACH SPACES}

\begin{abstract}
It is known that a Banach space contains an isomorphic copy of $c_0$ if, and only if, it can be equivalently renormed to be almost square. We introduce and study transfinite versions of almost square Banach spaces with the purpose to relate them to the containment of isomorphic copies of $c_0(\kappa)$, where $\kappa$ is some uncountable cardinal. We also provide several examples and stability results of the above properties by taking direct sums, tensor products and ultraproducts. By connecting the above properties with transfinite analogues of the strong diameter two property and octahedral norms, we obtain a solution to an open question from \cite{CLL2021}.

\end{abstract}

\section{Introduction}

Since the starting point of the study of Banach space theory, a considerable effort has been made in order to determine how the presence of an isomorphic copy of $c_0$ or $\ell_1$ in a Banach space affects its structure. This makes interesting the search of properties which characterise the containment of the abovementioned spaces. In this sense, let us point out two characterisations of the containment of the spaces $\ell_1$ and $c_0$ of geometric nature. In \cite[Theorem II.4]{Gode1989} it is proved that a Banach space $X$ contains an isomorphic copy of $\ell_1$ if, and only if, it admits an equivalent norm $\nn{\cdot}$ which is \textit{octahedral}, that is, given a finite-dimensional subspace $Y\subset X$ and $\varepsilon>0$, there is an element $x\in S_{(X,\nn{\cdot})}$ such that
\begin{equation*}
    \nn{ y+rx}\geq (2-\varepsilon)(\nn{y}+\vert r\vert)\quad \text{for all $y\in Y$ and $r\in\mathbb R$.}
\end{equation*}
Concerning the containment of $c_0$, a more recent characterisation was given in \cite[Corollary 2.4]{BLR2016}:  a Banach space $X$ contains an isomorphic copy of $c_0$ if, and only if, it admits an equivalent \textit{almost square} (\textit{ASQ}, for short) norm $\nn{\cdot}$, that is, given a finite-dimensional subspace $Y\subset X$ and $\varepsilon>0$, there is an element $x\in S_{(X,\nn{\cdot})}$ such that
\begin{equation*}
    \nn{ y+r x}\leq (1+\varepsilon)(\nn{ y}\vee\vert r\vert) \quad \text{for all $y\in Y$ and $r\in\mathbb R$.}
\end{equation*}

At this point, it is natural to look for geometric characterisations of the containment of non-separable versions of $\ell_1$ and $c_0$. In this spirit, as far as the containment of $\ell_1(\kappa)$ is concerned, transfinite generalisations of octahedral norms were introduced in \cite{CLL2021} in different directions and some characterisations of the containment of $\ell_1(\kappa)$ were obtained in \cite{AMRZ2021,CLL2021}. To mention the strongest known result, it is proved in \cite[Theorem 1.3]{AMRZ2021}  that a Banach space $X$ contains an isomorphic copy of $\ell_1(\kappa)$, where $\kappa$ is an uncountable cardinal, if, and only if, there exists an equivalent norm $\nn{\cdot}$ such that $(X,\nn\cdot)$ fails the \textit{$(-1)$-ball covering property for cardinals $<\kappa$} ($(-1)$-BCP$_{<\kappa}$, for short), that is satisfying that, given any subspace $Y\subset X$ such that $\dens(Y)<\kappa$, there exists $x\in S_{(X,\nn\cdot)}$ such that
\begin{equation*}
    \nn{ y+r x}=\nn{ y}+\vert r\vert \quad \text{for all $y\in Y$ and $r\in\mathbb R$.}
\end{equation*}

Motivated by the above results, in the present paper we aim to introduce different transfinite versions of ASQ spaces in order to search for a characterisation of those spaces that contain isomorphic copies of $c_0(\kappa)$.

Let us now describe in more detail the content of the paper. In Section \ref{section:defiandexamples} we define transfinite ASQ spaces and call them $\ASQ{\kappa}$ spaces, where $\kappa$ is some fixed cardinal (see Definition \ref{defi:transfiniteASQ}) and we provide many examples of Banach spaces enjoying these properties. In Section \ref{section:renormings} we consider the relations between being $\ASQ{\kappa}$, admitting an equivalent $\ASQ{\kappa}$ renorming and containing isomorphic copies of $c_0(\kappa)$. One of the highlights of this section is Example \ref{theo:kappaASQnotconc0}, in which we find, for every uncountable cardinal $\kappa$, a Banach space $X$ which is $\ASQ{\kappa}$, but such that $X^*$ fails to contain $\ell_1(\omega_1)$ and, in particular, $X$ cannot contain $c_0(\omega_1)$. This means that the property of being renormable to be $\ASQ{\kappa}$ is not strong enough to characterise the Banach spaces that contains $c_0(\kappa)$ isomorphically. Hence, we consider a strengthening of  $\ASQ{\kappa}$ that we call $\SQ{\kappa}$ spaces (see Definition \ref{defi:transfiniteASQ}) and which will contain isomorphic copies of $c_0(\kappa)$. Therefore we face the question whether every Banach space containing $c_0(\kappa)$, for some uncountable cardinal $\kappa$, admits an equivalent $\SQ{\kappa}$ renorming. Even though we do not know the answer in complete generality, we prove in Theorem \ref{theo:renormingkASQ} that, if $\dens(X)=\kappa$, then $X$ admits an equivalent $\SQ{\text{cf}(\kappa)}$ renorming, where $\text{cf}(\kappa)$ stands for the cofinality of $\kappa$.

In Section \ref{section:stabilityresults} we study various stability results for $\AandSQ{\kappa}$ spaces through different operations of Banach spaces, in order to enlarge the class of the known examples of Banach spaces enjoying these properties. We mainly extend known results for ASQ spaces, but which, in their transfinite version, produce new surprising results. For instance, with respect to absolute sums, in Theorem~\ref{thm: ell_infty sum} we are able to produce $\ell_\infty$-sums of spaces which are $\ASQ{\kappa}$ even though none of its components is $\ASQ{\kappa}$, which is a notable difference from the previously known results for finite sums of ASQ spaces. We also analyse $\AandSQ{\kappa}$ properties with respect to taking spaces of operators and tensor products. In Corollary~\ref{coro:injectivetensASQ} we prove that, if $X$ is $\AandSQ{\kappa}$ and $Y$ is non-trivial, then the injective tensor product $X\iten Y$ is $\AandSQ{\kappa}$. If we also require $Y$ being $\AandSQ{\kappa}$, then so is its projective tensor product $X\pten Y$ (see Theorem~\ref{theo:largeASQprojectensor}). Observe that the latter result is important, because most of the known examples of ASQ spaces come from some kind of $\infty$-norm, but the projective norm on a tensor product has dramatically different behaviour.

We end the study of stability results with ultraproducts which, as one might expect, provide a lot of examples of $\SQ{\kappa}$ spaces. Indeed, in Theorem~\ref{theo:stabilityultraproduct} we prove that, if a family $\{X_\alpha:\alpha\in\mathscr A\}$ consists of $\ASQ{\kappa}$ spaces and if we consider a countable incomplete ultrafilter $\mathcal U$, then the ultraproduct $(X_\alpha)_\mathcal U$ is $\SQ{\kappa}$. Lastly, we show in Example~\ref{exam:ultraln} that the requirement of the factors being $\ASQ{\kappa}$ is not necessary.

In Section \ref{section:connectotherprop} we investigate the connection of $\AandSQ{\kappa}$ properties with other properties of Banach space, such as the transfinite versions of octahedrality and diameter two properties. As a consequence of our work, we derive that if $X$ is $\ASQ{\kappa}$ (respectively, $\SQ{\kappa}$), then $X$ has the SD2P$_{<\kappa}$ (respectively, the $1$-ASD2P$_{<\kappa}$) and, consequently, $X^*$ is $<\kappa$-octahedral (respectively, fails the $(-1)$-BCP$_{<\kappa}$) (see Proposition \ref{prop:connecASQSd2P}). As a consequence, in Remark~\ref{rem:solutionCCL2021} we provide, for every uncountable cardinal $\kappa$, an example of a Banach space $X$ which is $<\kappa$-octahedral but which fails to contain an isomorphic copy of $\ell_1(\omega_1)$, giving a negative solution to \cite[Question 1]{CLL2021}. 

In Section~\ref{section:generalisations} we introduce a parametric version of $\AandSQ{\kappa}$ spaces (see Definition~\ref{def: parametric ASQ}) which includes all of the known versions of ASQ spaces. Moreover, we improve the known isomorphic characterization of Banach spaces containing $c_0$ (see Theorem~\ref{thm: (r,s)-SQ renorming}).

\textbf{Terminology.} Throughout the paper we only consider real Banach spaces. Given a Banach space $X$, we denote the closed unit ball and the unit sphere of $X$ by $B_X$ and $S_X$, respectively. We also denote by $X^*$ the topological dual of $X$. Given two Banach spaces $X$ and $Y$ we denote by $L(X,Y)$ the space of linear bounded operators from $X$ into $Y$. Given a subset $A$ of $X$ we denote by $\spn(A)$ (respectively, $\overline{\spn}(A)$) the linear span (respectively, the closed linear span) of the set $A$, whereas $\dens(X)$ denotes the density character of a topological space $X$, i.e. the smallest cardinal which is the cardinality of a dense set in $X$.


Given a set $A$, we denote by $\vert A\vert$ its cardinality and by $\mathcal{P}_\kappa(A)$ and $\cl P_{<\kappa} (A)$ the sets of all subsets of $A$ of cardinality at most $\kappa$ and strictly less than $\kappa$, respectively, for some cardinal $\kappa$. By $\mathbb N_{\geq 2}$ we denote the set $\{n\in\mathbb N\colon n\geq 2\}$.

Given an infinite set $\mathscr A$ and an uncountable cardinal $\kappa$, a non-principal ultrafilter $\mathcal U$ over $\mathscr A$ is said to be \textit{$\kappa$-complete} if it closed with respect to $<\kappa$ many intersections. It is immediate to see that a non-principal ultrafilter $\mathcal U$ is $\aleph_1$-incomplete if, and only if, there is a function $f:\mathscr A\longrightarrow \mathbb R$ so that $f(\alpha)>0$ for every $\alpha\in\mathscr A$ and so that $\lim_\mathcal U f(\alpha)=0$.

\section{Transfinite almost square Banach spaces and first examples}\label{section:defiandexamples}

Let us begin with the definition of an (A)SQ Banach space depending on a given cardinal.

\begin{definition}\label{defi:transfiniteASQ}
Let $X$ be a Banach space and $\kappa$ a cardinal. We say that
\begin{itemize}
 \item[(a)] $X$ is \textit{$<\kappa$-almost square} ($\ASQ{\kappa}$, for short) if, for every set $A\in\mathcal{P}_{<\kappa}(S_X)$ and $\varepsilon>0$, there exists $y\in S_X$ such that $\Vert x \pm y\Vert\leq 1+\varepsilon$ holds for all $x\in A$,
 
 \item[(b)] $X$ is \textit{$<\kappa$-square} (${SQ}_{<\kappa}$, for short) if, for every set $A\in\mathcal{P}_{<\kappa}(S_X)$, there exists $y\in S_X$ such that $\Vert x \pm y\Vert\leq 1$ holds for all $x\in A$.
\end{itemize}
As a special case, let us also define \emph{ASQ}$_\kappa$ and \emph{SQ}$_\kappa$ spaces by considering $A\in\mathcal{P}_\kappa(S_X)$, instead.
\end{definition}

Notice that, if $\kappa$ is infinite, then we can equivalently require just that $\|x + y\|\le1+\e$. Moreover, a standard argument shows that, for the SQ case, it is equivalent to require that $\|x \pm y\|=1$.

It is known that a Banach space $X$ is ASQ if and only if, for every finite-dimensional subspace $Y\subset X$ and $\varepsilon>0$, there is $x\in S_X$ such that
\begin{equation*}
    \|y+ rx\|\le(1+\e)(\|y\|\vee|r|)\text{ for all }y\in Y\text{ and }r\in\mathbb R
\end{equation*}
(see \cite[Proposition~2.1]{ALL2016}). Even more is true, in fact, a straightforward application of \cite[Lemma 2.2]{ALL2016} provides a description of the above notions via subspaces of density $<\kappa$, whenever $\kappa$ is uncountable. The version for $\kappa=\aleph_0$ is unknown for the authors.

\begin{proposition}\label{prop:chartransfASQ}
    Let $X$ be a Banach space and $\kappa$ an uncountable cardinal. The following are equivalent:
    \begin{itemize}
        \item[(i)] $X$ is $\ASQ{\kappa}$ ($\SQ{\kappa}$, respectively).
        \item[(ii)] For every subspace $Y\subset X$ with $\dens(Y)<\kappa$ and $\varepsilon>0$ ($\e \geq 0$, respectively), there exists $x\in S_X$ such that
    $$\Vert y+r x\Vert\leq (1+\varepsilon)(\Vert y\Vert\vee\vert r\vert)$$
    holds for every $y\in Y$ and every $r\in\mathbb R$.
    \end{itemize}
\end{proposition}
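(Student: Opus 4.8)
The plan is to deduce this from the known finite-dimensional characterisation of ASQ spaces together with the cited \cite[Lemma 2.2]{ALL2016}, which should be the engine allowing us to pass from finite-dimensional data to separable (or density $<\kappa$) data. The implication (ii)$\Rightarrow$(i) is the trivial direction: given $A\in\mathcal P_{<\kappa}(S_X)$, put $Y=\overline{\spn}(A)$, which has $\dens(Y)\le|A|<\kappa$, and apply (ii) with this $Y$ (and $\e>0$, or $\e=0$ in the SQ case) to obtain $x\in S_X$; specialising the inequality $\|y+rx\|\le(1+\e)(\|y\|\vee|r|)$ to $y=x_0\in A$ and $r=\pm1$ yields $\|x_0\pm x\|\le(1+\e)$ (resp. $\le 1$), which is exactly the defining condition of $\ASQ{\kappa}$ (resp. $\SQ{\kappa}$) after the harmless replacement of $1+\e$ by $1+\e'$.

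For the main implication (i)$\Rightarrow$(ii), fix a subspace $Y\subset X$ with $\dens(Y)<\kappa$ and $\e>0$. Choose a dense subset $D\subset S_Y$ with $|D|=\dens(Y)<\kappa$. Since $X$ is $\ASQ{\kappa}$ and $D\in\mathcal P_{<\kappa}(S_X)$, there is $x\in S_X$ with $\|d\pm x\|\le 1+\e'$ for all $d\in D$, where $\e'$ is chosen small relative to $\e$. The point is now to upgrade the estimate ``$\|d\pm x\|\le 1+\e'$ for $d$ in a dense subset of $S_Y$'' to ``$\|y+rx\|\le(1+\e)(\|y\|\vee|r|)$ for \emph{all} $y\in Y$, $r\in\mathbb R$''. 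First, density and continuity of the norm give $\|y\pm x\|\le 1+\e'$ for \emph{all} $y\in S_Y$, hence by homogeneity $\|y+rx\|\le(1+\e')\|y\|$ whenever $|r|\le\|y\|$. This handles the regime $|r|\le\|y\|$; the complementary regime $|r|\ge\|y\|$, i.e. controlling $\|y+rx\|$ by $(1+\e)|r|$, is where \cite[Lemma 2.2]{ALL2016} enters — it is precisely the tool that converts the two-sided ``$\pm$'' control on a generating set into the full convexity-type estimate, and I would invoke it essentially verbatim in the form stated there, with the density-$<\kappa$ set playing the role of the finite-dimensional subspace in the classical statement. The SQ case ($\e\ge0$, and in particular $\e=0$) is identical, using that for SQ we may take $\|d\pm x\|=1$ on the dense set and then the closed unit ball is closed, so the passage to all of $S_Y$ preserves the equality $\|y\pm x\|\le 1$.

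The main obstacle — and the reason this is not completely immediate — is the uniformity in the limiting argument: one must be careful that a single $x\in S_X$ works simultaneously for all $y$ in the (possibly nonseparably-normed-looking but density-$<\kappa$) subspace $Y$, and that the error $\e'$ chosen for the dense set $D$ really does propagate to the clean bound $(1+\e)(\|y\|\vee|r|)$ without any loss depending on $\dim Y$. This is exactly the content of \cite[Lemma 2.2]{ALL2016}, so the real work is bookkeeping: verifying that the hypotheses of that lemma are met by a dense subset of $S_Y$ rather than by a finite $\e$-net, which is legitimate because the lemma's conclusion is a closed condition in $y$. I expect no genuine difficulty beyond this, and the countable case $\kappa=\aleph_0$ is excluded precisely because there a dense subset of $S_Y$ need not be finite and the finite-dimensional reduction of \cite[Proposition 2.1]{ALL2016} does not obviously chain up to all separable subspaces.
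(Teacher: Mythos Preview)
Your proposal is correct and follows essentially the same approach as the paper, which simply states that the proposition is ``a straightforward application of \cite[Lemma 2.2]{ALL2016}'' without giving further details; you have spelled out precisely that straightforward application. One small simplification: in (ii)$\Rightarrow$(i) no $\varepsilon'$ is needed (plugging $y\in A\subset S_X$ and $r=\pm1$ into the inequality gives $\|y\pm x\|\le 1+\varepsilon$ on the nose), and in (i)$\Rightarrow$(ii) the regime $|r|\ge\|y\|$ follows from the same elementary convexity as the other regime --- write $\tfrac{y}{r}+x=\|\tfrac{y}{r}\|\bigl(\tfrac{y}{\|y\|}\operatorname{sgn}(r)+x\bigr)+(1-\|\tfrac{y}{r}\|)x$ --- so no loss in $\varepsilon$ occurs anywhere and the invocation of \cite[Lemma 2.2]{ALL2016} is really just packaging this convexity step.
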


Let us devote the rest of the section to provide various examples of transfinite (A)SQ spaces.

\begin{example}\label{example:countablysupported} Let $\kappa$ be an uncountable cardinal, and let $\ell_\infty^c(\kappa)$ be the elements of $\ell_\infty(\kappa)$ whose support is at most countable. If $X$ is a subspace of $\ell_\infty^c(\kappa)$ containing $c_0(\kappa)$, then $X$ is $\SQ{\kappa}$. Indeed, fix $A\in\mathcal{P}_{<\kappa}(S_X)$. Since $\supp(f)\subseteq\kappa$ is at most countable for every $f\in A$, we can find $\lambda\in\kappa$ so that $\lambda\notin \bigcup_{f\in A}\supp(f)$. Clearly $\Vert f+e_\lambda\Vert=1$ holds for every $f\in A$.
\end{example}

\begin{example}\label{ex: ell_infty rigid ASQ}
    Fix a non-principal ultrafilter $\mathcal U$ in $\mathbb N$. For every $x\in\ell_\infty$, denote by $\lim(x)$ the limit of $x(n)$ with respect to $\mathcal U$ and define the norm
    \begin{equation*}
        \nn x:=|\lim(x)|\vee\bigvee_{n\in\mathbb N}|x(n)-\lim(x)|.
    \end{equation*}
    The Banach space $X:=(\ell_\infty,\nn\cdot)$ was defined in \cite{BLR2016} and proved to be ASQ. In the following we prove that it actually is $\SQ{\aleph_0}$. Nevertheless, $X$ cannot be ASQ$_{\aleph_0}$ (see Theorem~\ref{thm: C(K) non renormable to be ASQ}) since $\ell_\infty=C(\beta\mathbb N)$.
    
    Fix $x_1,\ldots,x_k\in S_X$ and define, for every $n\in\mathbb N$ and $m\in\{1,\ldots,k\}$, the set
    \begin{equation*}
        A_{n,m}:=\{p\in\mathbb N:|x_m(p)-\lim(x_m)|<1/n\}.
    \end{equation*}
    By definition of ultralimit, $A_{n,m}\in\mathcal U$, therefore $A_n:=\bigcap_{m=1}^kA_{n,m}\in\mathcal U$ for every $n\in\mathbb N$. Since $\mathcal U$ is non-principal, each $A_n$ is infinite, hence, for every $n\in\mathbb N$, we can find $f(n)\in A_n$ such that $f(n)<f(n+1)$. Notice that, since $\emptyset\notin\mathcal U$, either $f(2\mathbb N)$ or $f(2\mathbb N+1)$ is not in $\mathcal U$, say for example that $f(2\mathbb N)\notin\mathcal U$. Define the formal series
    \begin{equation*}
        y:=\sum_{n\in 2\mathbb N}(1-1/n)e_{f(n)}\in\ell_\infty.
    \end{equation*}
    Notice that $\|y\|_\infty=1$ and $\lim(y)=0$, therefore $y\in S_X$. For every $i\in\{1,\ldots,k\}$, notice that
    \begin{equation*}
        \nn{x_i+ y}=|\lim(x_i)|\vee\bigvee_{n\in\mathbb N}|x_i(n)-\lim(x_i)+ y(n)|\le
    \end{equation*}
    \begin{equation*}
        \le1\vee\bigvee_{n\in 2\mathbb N}|x_i(f(n))-\lim(x_i)+ (1-1/n)|\vee\bigvee_{n\in \mathbb N\setminus f(2\mathbb N)}|x_i(n)-\lim(x_i)|\le
    \end{equation*}
    \begin{equation*}
        \le1\vee(1-1/n+1/n)\vee1=1.
    \end{equation*}
Therefore $X$ is $\SQ{\aleph_0}$.
\end{example}

The previous example yields a non-separable example of a $\SQ{\aleph_0}$ space. A natural question at this point is whether or not there is a separable Banach space which is $\SQ{\aleph_0}$. The next example is a modification of \cite[Example~6.4]{ALL2016} and provides an affirmative answer.

\begin{example}
Given $n\in\mathbb N$, consider
$$X_n:=\{f\in C(S_{\bb R^n}): f(s)=-f(-s)\mbox{ for all }s\in S_{\bb R^n}\}.$$

Let us show that $X_n$ is SQ$_n$. Fix $f_1, \dots, f_n \in S_{X_n}$.
By a corollary of Borsuk--Ulam theorem \cite[p.~485, Satz VIII]{AH1974}, we can find $s_0\in S_{\bb R^n}$ such that $f_i(s_0)=0$ holds for every $i\in\{1,\ldots,n\}$. Pick any function $h\in S_{X_n}$ such that $h(s_0)=1$ and define
\[
    g(s):=\left(1-\bigvee_{i=1}^n|f_i(s)|\right)h(s).
\]
Notice that $g\in X_n$ and that $g(s_0)=1$, therefore $\|g\|=1$. For every $i\in\{1,\ldots,n\}$ and $s\in S_{\bb R^n}$ we have
\[
    |f_i(s)\pm g(s)|\le|f_i(s)|+|g(s)|\le|f_i(s)|+1-\bigvee_{j=1}^n|f_j(s)|\le1,
\]
as required.

Now define $X:=c_0(\mathbb N, X_n)$. It is obvious that $X$ is separable. Moreover, since $X_n$ is SQ$_n$ for every $n \in \bb N$, it is immediate to check that $X$ is SQ$_{<\aleph_0}$. In fact, fix $x_1,\ldots,x_k\in S_{X}$ and without loss of generality assume that $\|x_i(k)\|=1$ for all $i\in\{1,\ldots,k\}$. Find $y\in S_{X_k}$ such that $\|x_i(k)+y\|\le1$ holds for all $i\in\{1,\ldots,k\}$, therefore $\|x_i+y\cdot e_k\|\le1$.
\end{example}

Spaces of (almost) universal disposition were introduced in \cite{G1966}. Let us recall their definitions. Given a family of Banach spaces $\mathfrak K$, a Banach space $X$ is \emph{of almost universal disposition for $\mathfrak K$} if for every $S\subset T$ in $\mathfrak K$, any isometric embedding $f:S\rightarrow X$ extends to an $\varepsilon$-isometric embedding $F:T\rightarrow X$. Moreover, a Banach space $X$ is \emph{of universal disposition for $\mathfrak K$} if for every $S\subset T$ in $\mathfrak K$, any isometric embedding $f:S\rightarrow X$ extends to an isometric embedding $F:T\rightarrow X$.

\begin{example}
    If $X$ is of almost universal disposition (of universal disposition, respectively) for Banach spaces with density character strictly less then $\kappa$, then $X$ is $\ASQ{\kappa}$ ($\SQ{\kappa}$, respectively). We show the claim for the ASQ case only. Fix a subspace $Y\subset X$ with $\dens(Y)<\kappa$ and $\varepsilon>0$. The inclusion $Y\rightarrow X$ extends to an $\varepsilon$-isometrical embedding $T:Y\oplus_\infty\mathbb R\rightarrow X$. Find $r\in\mathbb R$ such that $\|T(0,r)\|=1$. We can do so since $T$ is injective by picking any $s\not=0$ and setting $r:=s/T(0,s)$. Notice that
    \begin{equation*}
        |r|=\|(0,r)\|_\infty\le(1+\varepsilon)\|T(0,r)\|=1+\varepsilon.
    \end{equation*}
    It is clear that, for every $y\in S_Y$,
    \begin{equation*}
        \|y+T(0,r)\|\le(1+\varepsilon)\|(y,0)+(0,r)\|_\infty=(1+\varepsilon)(\|y\|\vee|r|)\le(1+\varepsilon)^2.
    \end{equation*}
\end{example}

In the following we study $C_0(X)$ spaces. It is known that, given a locally compact Hausdorff space $X$, $C_0(X)$ is ASQ if and only if $X$ is non-compact \cite[Proposition~2.1]{AMR2022}. In the following we provide a topological description of $X$ so that $C_0(X)$ is $\ASQ{\kappa}$, whenever $\kappa$ is uncountable and, as a byproduct, we find out that being $\SQ{\kappa}$ and $\ASQ{\kappa}$ are equivalent in $C_0(X)$ spaces, at least under a mild regularity assumption on $X$.

\begin{theorem}\label{thm: C_0(X) spaces}
    Let $X$ be a T$_4$ locally compact space. If $\kappa$ is an uncountable cardinal, then the following are equivalent:
    \begin{itemize}
        \item[(i)] $C_0(X)$ is $\SQ{\kappa}$,
        \item[(ii)] $C_0(X)$ is $\ASQ{\kappa}$,
        \item[(iii)] If $\mathscr K\in\mathcal{P}_{<\kappa}(\mathcal{P}(X))$ is a family consisting of compact sets in $X$, then $\bigcup\mathscr K$ is not dense in $X$.
    \end{itemize}
\end{theorem}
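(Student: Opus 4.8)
The plan is to prove (i)$\Rightarrow$(ii) trivially (every $\SQ{\kappa}$ space is $\ASQ{\kappa}$), (ii)$\Rightarrow$(iii) by contraposition, and (iii)$\Rightarrow$(i) directly. The single topological tool I will use in both nontrivial directions is a Urysohn-type bump: in a T$_4$ locally compact space, given a compact set $K$ (or a single point) inside an open set $W$, one can produce $g\in C_0(X)$ with $0\le g\le 1$, $g\equiv 1$ on $K$, and $\supp(g)$ a compact subset of $W$ --- normality supplies Urysohn's lemma, and local compactness lets us shrink $W$ to a relatively compact neighbourhood so that $g$ genuinely vanishes at infinity.

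For (ii)$\Rightarrow$(iii) I would argue contrapositively: suppose there is $\mathscr K\in\mathcal P_{<\kappa}(\mathcal P(X))$ of compact sets with $\bigcup\mathscr K$ dense in $X$ (so $X\neq\emptyset$ and some member is nonempty). For each $K\in\mathscr K$ pick $f_K\in S_{C_0(X)}$ with $0\le f_K\le1$ and $f_K\equiv1$ on $K$, and set $A:=\{f_K:K\in\mathscr K\}$, of cardinality $<\kappa$. If $C_0(X)$ were $\ASQ{\kappa}$, then for a fixed $\e\in(0,1)$ there would be $g\in S_{C_0(X)}$ with $\norm{f_K\pm g}\le1+\e$ for all $K$; evaluating at points of $K$ forces $|g|\le\e$ on $K$, hence $|g|\le\e$ on the dense set $\bigcup\mathscr K$, hence on all of $X$ by continuity --- contradicting $\norm g=1$.

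For (iii)$\Rightarrow$(i), fix $A\in\mathcal P_{<\kappa}(S_{C_0(X)})$ and form
\[
\mathscr K:=\bigl\{\{x\in X:|f(x)|\ge1/n\}:f\in A,\ n\in\mathbb N\bigr\},
\]
whose members are compact (as $f\in C_0(X)$) and whose cardinality is at most $|A|\cdot\aleph_0<\kappa$ since $\kappa$ is uncountable. By (iii), $U:=X\setminus\overline{\bigcup\mathscr K}$ is a nonempty open set, and $U$ being disjoint from every sublevel set $\{|f|\ge1/n\}$ means every $f\in A$ vanishes on $U$. Then I would take $x_0\in U$, use local compactness and the Hausdorff property to find an open $V$ with $x_0\in V\subseteq\overline V\subseteq U$ and $\overline V$ compact, and apply Urysohn to get $g\in C_0(X)$ with $0\le g\le1$, $g(x_0)=1$ and $\supp(g)\subseteq\overline V$. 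Then $\norm g=1$, and for every $f\in A$ and $x\in X$ one has $(f\pm g)(x)=f(x)$ when $x\notin\supp(g)$ and $(f\pm g)(x)=\pm g(x)$ (using $f(x)=0$) when $x\in\supp(g)\subseteq U$; either way $|(f\pm g)(x)|\le1$, so $\norm{f\pm g}\le1$ for all $f\in A$ and $C_0(X)$ is $\SQ{\kappa}$.

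I do not expect a serious obstacle: the whole argument hinges on one idea, namely replacing each $f\in A$ by the countable family of its compact sublevel sets $\{|f|\ge1/n\}$, which is exactly what turns ``$\bigcup\mathscr K$ not dense'' into ``all $f\in A$ vanish on a common open set'', and this is where uncountability of $\kappa$ is essential (to keep the enlarged family of size $<\kappa$). The only point requiring a little care is ensuring that the bump functions land in $C_0(X)$ rather than merely in $C_b(X)$, which is handled by combining normality with local compactness as described above.
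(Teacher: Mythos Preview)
Your proposal is correct and follows essentially the same approach as the paper's proof: both directions hinge on Urysohn bumps with compact support, and the key device in (iii)$\Rightarrow$(i) of replacing each $f\in A$ by the countable family of compact sets $\{|f|\ge 1/n\}$ is exactly what the paper does. The only cosmetic difference is that for (ii)$\Rightarrow$(iii) the paper argues directly (showing the open set $\{|g|>1/2\}$ witnesses non-density) whereas you argue by contraposition (density forces $|g|\le\varepsilon$ everywhere), but these are two phrasings of the same observation.
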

\begin{proof}
    (i)$\implies$(ii) is obvious. (ii)$\implies$(iii). Fix a family $\mathscr K\in\mathcal{P}_{<\kappa}(\mathcal{P}(X))$ consisting of compact sets in $X$ and fix any $K\in\mathscr K$. Since $K$ is compact and $X$ is locally compact, we can find a covering $U_1,\ldots,U_n$ for $K$ consisting of open relatively compact sets. Define $U:=\bigcup_{i=1}^nU_i$ and notice that $X\setminus U\not=\emptyset$, otherwise we would get that $X=\overline{U}$, which is compact, and this would contradict the fact that $C_0(X)$ is $\ASQ{\kappa}$. On the other hand, it is clear that $K$ and $X\setminus U$ are disjoint closed sets, therefore, since $X$ is normal, there exists a Urysohn's function $f_K:X\rightarrow[0,1]$ such that $f_K|_K=1$ and $f_K|_{X\setminus U}=0$. Notice that the support of $f_K$ is contained in $\overline{U}$, which is compact, thus $f_K\in S_{C_0(X)}$. Since $C_0(X)$ is $\ASQ{\kappa}$, there is $g\in S_{C_0(X)}$ satisfying
    \begin{equation*}
        \|f_K\pm g\|_\infty\le3/2\text{ for every }K\in\mathscr K.
    \end{equation*}
    It is clear by construction that $|g(x)|\le1/2$ holds for every $x\in\bigcup\mathscr K$. Therefore the non-empty open set $\{x\in X:|g(x)|>1/2\}$ is disjoint from $\bigcup\mathscr K$, hence $\bigcup\mathscr K$ is not dense in $X$.
    
    (iii)$\implies$(i). Fix $A\in\mathcal{P}_{<\kappa}(S_{C_0(X)})$. For every $f\in A$ and $n\in\mathbb N$, there exists a compact set $K_{f,n}\subset X$ such that $|f(x)|<1/n$ holds for every $x\in X\setminus K_{f,n}$. Define $\mathscr K:=\{K_{f,n}:f\in A\text{ and }n\in\mathbb N\}$ and notice that $|\mathscr K|\le|A|\cdot\aleph_0<\kappa$ since $\kappa$ is uncountable. By assumption we can find a non-empty open set $U$ which is disjoint from $\bigcup\mathscr K$ and, without loss of generality, we can assume it to be relatively compact. Since $X$ is normal, there exists a Urysohn's function $g:X\rightarrow[0,1]$ such that $\|g\|_\infty=1$ and $g|_{X\setminus U}=0$. Notice that the support of $g$ is contained in $\overline{U}$, which is compact, thus $g\in S_{C_0(X)}$. It is clear by construction that $\|f+g\|_\infty=1$ holds for every $f\in A$.
\end{proof}

A closer look to the proof of Theorem~\ref{thm: C_0(X) spaces} reveals that  (ii)$\iff$(iii)  actually holds without the assumption that $\kappa$ is uncountable. This corresponds to the already recalled result that $C_0(X)$ is ASQ if and only if $X$ is non-compact.  

Let us note that in the case $\kappa = \aleph_1$, the property (iii) of Theorem~\ref{thm: C_0(X) spaces} can be stated as ``$X$ does not admit a dense sigma-compact set''.

\section{Banach spaces which admit transfinite ASQ renorming}\label{section:renormings}

Let $X$ be a Banach space. In this section we will analyse the relations between the following properties:
\begin{itemize}
    \item[(a)] $X$ admits an equivalent $\ASQ{\kappa}$ renorming,
    \item[(b)] $X$ admits an equivalent $\SQ{\kappa}$ renorming,
    \item[(c)] $X$ contains an isomorphic copy of $c_0(\kappa)$.
\end{itemize}

Observe that an easy transfinite induction argument reveals that, if $X$ admits an equivalent $\SQ{\kappa}$ renorming, then $X$ contains an isomorphic copy of $c_0(\kappa)$. The situation is dramatically different if we replace the SQ norm with an ASQ norm.

\begin{example}\label{theo:kappaASQnotconc0}
Let $\kappa$ be an infinite cardinal. Then $X:=c_0(\mathbb{N}_{\geq 2}, \ell_n(\kappa))$ is $\ASQ{\kappa}$ but $X^*$ does not contain any isomorphic copy of $\ell_1(\omega_1)$, in particular, $X$ does not contain any isomorphic copy of $c_0(\omega_1)$.
\end{example}

\begin{proof}
Let us initially suppose that $\kappa>\aleph_0$. In order to prove that $X$ is $\ASQ{\kappa}$, it is enough to note that, 
for every set $A \in\mathcal{P}_{<\kappa}(S_{\ell_n(\kappa)})$, there is $y \in S_{\ell_n(\kappa)}$ such that $\|x+y\| \leq 2^{\frac 1 n}$ for all $x \in A$ (later, in Section~\ref{section:generalisations}, we will call this property $\rsSQ{2^{-\frac 1 n}}{2^{-\frac 1 n}}{\kappa}$, see Definition~\ref{def: parametric ASQ}). Indeed, since every $x \in A$ has a countable support, we can find $\lambda \in \kappa$ such that $x(\lambda) = 0$ for all $x \in A$.
Take $y$ defined by $y(\mu) = \delta_{\lambda\mu}$. Now
$\|x+ y\| = (\|x\|^n + \|y\|^n)^{\frac 1 n} = 2^{\frac 1 n}$ for all $x \in A$, as required.
In order to conclude, fix $A\in\mathcal{P}_{<\kappa}(S_X)$ and $\varepsilon>0$. Find $n\in\mathbb N$ such that $2^{\frac{1}{n}}<1+\varepsilon$ and find $y\in S_{\ell_n(\kappa)}$ such that $\|x(n)+y\|\le2^\frac{1}{n}$ holds for all $x\in A$, then $\|x+y\cdot e_n\|\le1+\varepsilon$.

If $\kappa = \aleph_0$, the proof is similar, but, for every $\e > 0$, we can only manage to find $y$ such that
$\|x + y\| \leq (2+\e)^{\frac 1 n}$, which is still enough. Indeed, given a finite set $A \subset S_{\ell_n}$, we can find $m \in \bb N$ such that $|x(m)| < \delta$ for all $x \in A$, where $\delta > 0$ is chosen such that $(1+\delta)^n < 1+\e$, thus we only need to define $y:=e_m$.

In order to prove the second part, observe that $X^*=\ell_1(\mathbb{N}_{\geq 2},\ell_{n^*}(\kappa))$ where $n^*$ is the conjugate index of $n$. Since $X^*$ is a countable sum of reflexive Banach spaces, we derive that $X^*$ is weakly compactly generated. Consequently $X^*$ cannot contain $\ell_1(\omega_1)$, which even fails weaker properties (e.g. it is immediate to see that $\ell_1(\omega_1)$ fails the Corson property (C) by using \cite[Theorem 12.42]{FHHMPZ2001}, which is inherited by closed subspaces).

Finally, to conclude that $X$ does not contain $c_0(\omega_1)$, observe that if $X$ contained $c_0(\omega_1)$, taking adjoint we would have that $\ell_1(\omega_1)$ would be isomorphic to a quotient of $X^*$. Since $\ell_1(\omega_1)$ has the lifting property, we would conclude that $\ell_1(\omega_1)$ is isomorphic to a subspace of $X^*$, which entails a contradiction with the previous point.
\end{proof}

\begin{remark}\label{rem: ell_infty(ell_n) is ASQ}
The same proof as in Example~\ref{theo:kappaASQnotconc0} gives that $\ell_\infty(\mathbb{N}_{\geq 2},\ell_n(\kappa))$ also is $\ASQ{\kappa}$. More in detail, using the terminology from Example~\ref{theo:kappaASQnotconc0}, since each $\ell_n(\kappa)$ is $\rsSQ{2^{-\frac 1 n}}{2^{-\frac 1 n}}{\kappa}$, the claim follows from a direct computation or from Theorem~\ref{thm: ell_infty sum}. This shows that there are dual (actually bidual) Banach spaces which are $\ASQ{\kappa}$, for every infinite cardinal $\kappa$. Let us point out that the importance of this result is that, for classical ASQ spaces, it was posed in \cite{ALL2016} the question whether there is any dual ASQ space, which was positively solved in \cite{AHT2020}.

Observe that the situation for SQ spaces is different, because they are clearly incompatible with the existence of extreme points in the unit ball, so no dual Banach space can enjoy any SQ property.
\end{remark}

We have seen that the $\ASQ{\kappa}$ condition does not imply the containment of large copies of $c_0$. However, this behaviour is impossible in spaces of continuous functions, as the following theorem shows.

\begin{theorem}\label{thm: C(K) non renormable to be ASQ}
Let $K$ be a compact Hausdorff topological space. If $C(K)$ admits any equivalent $\ASQ{\kappa}$ norm, then it contains an isomorphic copy of $c_0(\kappa)$.
\end{theorem}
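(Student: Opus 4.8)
The plan is to combine the $\ASQ{\kappa}$ hypothesis on $C(K)$ with the known characterisation of when $C_0(X)$-type spaces are almost square, by locating inside $C(K)$ a genuine (isometric, up to equivalence of norms) copy of some $C_0(X)$ with $X$ far from $\sigma$-compact, and then to upgrade that to a $c_0(\kappa)$-copy. Concretely, first I would fix an equivalent $\ASQ{\kappa}$ norm $\nn\cdot$ on $C(K)$ and, imitating the proof of the implication (ii)$\implies$(iii) in Theorem~\ref{thm: C_0(X) spaces}, extract a transfinite family of ``bump'' functions witnessing the failure of $K$ to be covered by $<\kappa$ many suitably small closed sets. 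The subtlety relative to Theorem~\ref{thm: C_0(X) spaces} is that the norm is only equivalent to the sup norm, so I would work with the two-sided estimate $a\norm{f}_\infty \le \nn f \le b\norm f_\infty$ and choose the $\varepsilon$ in the $\ASQ{\kappa}$ definition small enough (depending on $b/a$) so that the resulting open sets $\{x : |g(x)| > c\}$ are still disjoint from the union of the relevant compacta.

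Second, I would build by transfinite recursion of length $\kappa$ a family $(f_\alpha)_{\alpha<\kappa}$ in $C(K)$ together with points $x_\alpha \in K$ and open neighbourhoods $U_\alpha \ni x_\alpha$ such that the $f_\alpha$ have pairwise disjoint (or almost disjoint) supports, $\norm{f_\alpha}_\infty = 1 = f_\alpha(x_\alpha)$, and $f_\alpha$ vanishes off $U_\alpha$. At the successor stage $\alpha$, I apply the $\ASQ{\kappa}$ property to the set $\{f_\beta : \beta<\alpha\}$ (which has cardinality $<\kappa$) to obtain a new norm-one $g$ with $\nn{f_\beta \pm g} \le 1+\varepsilon$ for all $\beta<\alpha$; then $|g|$ is small on $\bigcup_{\beta<\alpha}\supp(f_\beta)$, so the open set where $|g|$ exceeds the threshold avoids all previous supports, and I extract from it (using normality / Urysohn, as in the proof of Theorem~\ref{thm: C_0(X) spaces}) a bump function $f_\alpha$ supported there. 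This produces a sequence of disjointly supported norm-one functions indexed by $\kappa$.

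Third, I would check that the closed linear span of $(f_\alpha)_{\alpha<\kappa}$ is isomorphic to $c_0(\kappa)$. Here the point is that disjointly supported functions in $C(K)$ span an isometric copy of $c_0(\Gamma)$ for the relevant index set in the sup norm, because for scalars $(a_\alpha)$ with finite support one has $\norm{\sum a_\alpha f_\alpha}_\infty = \max_\alpha |a_\alpha|$; passing to the equivalent norm $\nn\cdot$ only distorts this by the factor $b/a$, giving an isomorphic (not isometric) copy of $c_0(\kappa)$ inside $(C(K),\nn\cdot)$, hence inside $C(K)$.

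The main obstacle I anticipate is the bookkeeping in the transfinite recursion: ensuring at limit stages that the construction can continue, i.e. that after $<\kappa$ steps the union of the supports built so far is still ``small enough'' (not dense, or more precisely avoidable by the set where the new witness function is large) so that a fresh bump can be inserted. In Theorem~\ref{thm: C_0(X) spaces} this is exactly property (iii), but there it was a hypothesis; here I must instead \emph{derive} the analogous smallness from the $\ASQ{\kappa}$ assumption at each stage, and I have to be careful that the thresholds and the $\varepsilon$'s chosen along the way are uniform (independent of $\alpha$) so that a single geometric constant controls everything and the limiting family genuinely spans $c_0(\kappa)$ rather than something with a distorted norm that degenerates as $\alpha \to \kappa$. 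Fixing all constants at the outset in terms of the equivalence constants $a,b$ of $\nn\cdot$ should make this go through.
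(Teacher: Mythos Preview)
There is a genuine gap in your second step. You claim that after applying the $\ASQ{\kappa}$ property to $\{f_\beta:\beta<\alpha\}$ and obtaining $g\in S_{(C(K),\nn\cdot)}$ with $\nn{f_\beta\pm g}\le 1+\varepsilon$, it follows that $|g|$ is small on $\bigcup_{\beta<\alpha}\supp(f_\beta)$. This inference is not valid when $\nn\cdot$ is merely equivalent to the sup norm. From $a\norm{\,\cdot\,}_\infty\le\nn{\,\cdot\,}\le b\norm{\,\cdot\,}_\infty$ and $\nn{f_\beta\pm g}\le 1+\varepsilon$ you only get $\norm{f_\beta\pm g}_\infty\le (1+\varepsilon)/a$, while $\norm{f_\beta}_\infty$ need only satisfy $\norm{f_\beta}_\infty\ge 1/b$. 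Hence even at a peak point of $f_\beta$ the best bound on $|g|$ is roughly $(1+\varepsilon)/a-1/b$, which does not tend to $0$ as $\varepsilon\to 0$; away from the peak you have no control whatsoever. So you cannot force the new bump to have support disjoint from the old ones, and the direct construction of a disjointly supported $c_0(\kappa)$-basis collapses. Choosing $\varepsilon$ small ``depending on $b/a$'' does not rescue this, because the obstruction $1/a-1/b$ is independent of $\varepsilon$.

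The paper circumvents this entirely: it does \emph{not} attempt to build disjoint supports. Instead it runs the transfinite induction to produce $\{f_\alpha:\alpha<\kappa\}$ in $S_{(C(K),\nn\cdot)}$ with the ASQ estimate relative to the span of its predecessors, records only the open sets $V_\alpha=\{x:f_\alpha(x)>1/(2M)\}$, and then argues by contradiction. If $c_0(\kappa)$ did not embed in $C(K)$, then by Rosenthal's result $K$ would satisfy the $\kappa$-chain condition, so infinitely many of the $V_\alpha$ would share a common point $x$; evaluating $\nn{\sum_{i=1}^p f_{\alpha_i}}$ at $x$ then contradicts the iterated ASQ bound $(1+\varepsilon)^p$ once $p$ is chosen so that $p>2M^2(1+\varepsilon)^p$. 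The external structural input (Rosenthal's $\kappa$-chain condition characterisation) is what replaces the disjointness you were trying to manufacture by hand.
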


\begin{proof}
    Call $X:=C(K)$ and assume that $$\frac{1}{M}\Vert f\Vert\leq \nn f\leq M\Vert f\Vert$$
    holds for every $f\in X$. Find $p\in\mathbb N$ big enough and $\varepsilon>0$ small enough so that $p>2M^2(1+\varepsilon)^p$. If $(X,\nn\cdot)$ is $\ASQ{\kappa}$, then, by transfinite induction, we can find $\{f_\alpha:\alpha<\kappa\}\subseteq S_{(X,\nn{\cdot})}$ so that
    $$\nn{f+r f_\alpha}\leq (1+\varepsilon)(\nn f\vee\vert r\vert)$$
    holds for every $f\in \overline{\spn}\{f_\beta:\beta<\alpha\}$ and $r\in\mathbb R$. Note that $\Vert f_\alpha\Vert\geq \frac{1}{M}$. Up to consider $-f_\alpha$ instead, we can assume that the set
    $$V_\alpha:=\left\{x\in K: f_\alpha(x)>\frac{1}{2M}\right\}$$
    is non-empty, and it is clearly open. Suppose by contradiction that $c_0(\kappa)$ does not embed in $C(K)$, we then derive that $K$ satisfies the $\kappa$-chain condition \cite[page 227]{Rosenthal1970}. By \cite[Page 227, Remark]{Rosenthal1970} we get that, since $\{V_\alpha: \alpha<\kappa\}$ is a family of open sets in $K$, we can find an infinite set $\{\alpha_n: n\in\mathbb N\}\subseteq [0,\kappa)$ so that there exists $x\in \bigcap\limits_{n\in\mathbb N}V_{\alpha_n}$. Eventually,
    \[
    \begin{split}
        (1+\varepsilon)^p>\nn{\sum_{i=1}^p f_i}\geq \frac{1}{M}\left\Vert\sum_{i=1}^p f_i \right\Vert\geq \frac{1}{M}\sum_{i=1}^p f_i(x)\geq \frac{p}{2M^2},
    \end{split}
    \]
    which is a contradiction.
\end{proof}

Now it is time to analyse the following question.

\begin{question}
Let $\kappa$ be an infinite cardinal. If a Banach space contains an isomorphic copy of $c_0(\kappa)$, does it admit an equivalent $\SQ{\kappa}$ renorming?
\end{question}

We do not know the answer to the above question in complete generality. However, we are able to give some partial positive answers. The first one deals with Banach spaces that contains $c_0$.

\begin{proposition}\label{prop:rigidASQdual}
If $X$ is a dual Banach space containing an isomorphic copy of $c_0$, then $X$ admits an equivalent $\SQ{\aleph_0}$ renorming.
\end{proposition}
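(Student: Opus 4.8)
The plan is to use the classical fact that a dual Banach space containing $c_0$ must contain $c_0$ in a way that is, up to renorming, complemented and ``sitting nicely''—more precisely, I would invoke the Bessaga--Pełczyński type result that if a dual space $X = Z^*$ contains an isomorphic copy of $c_0$, then it contains an isomorphic copy of $\ell_\infty$ (this is the Bessaga--Pełczyński theorem: a dual space containing $c_0$ contains $\ell_\infty$, hence also $c$). Actually, the cleaner route is: such an $X$ contains a subspace isomorphic to $\ell_\infty$, and moreover, since $\ell_\infty$ is injective (a $1$-complemented subspace of any superspace once it is isometrically embedded, or at worst $\lambda$-complemented after embedding isomorphically), we can write $X = E \oplus F$ where $E$ is isomorphic to $\ell_\infty$ and $F$ is a closed complement. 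The key point is that $\ell_\infty$ carries an equivalent $\SQ{\aleph_0}$ norm—indeed $(\ell_\infty, \nn\cdot)$ from Example~\ref{ex: ell_infty rigid ASQ} is $\SQ{\aleph_0}$, and being $\SQ{\aleph_0}$ transfers across isomorphisms as a renorming property.

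The construction of the norm on $X = E \oplus F$ is then routine: fix an isomorphism $T : E \to \ell_\infty$ and pull back the $\SQ{\aleph_0}$ norm $\nn\cdot$ of Example~\ref{ex: ell_infty rigid ASQ} to an equivalent $\SQ{\aleph_0}$ norm $|||\cdot|||_E$ on $E$, and equip $X$ with the norm
\begin{equation*}
    |||e + f||| := |||e|||_E \vee \|f\|_F \qquad (e \in E,\ f \in F).
\end{equation*}
This is an equivalent norm on $X$ since the projections onto $E$ and $F$ are bounded. To check it is $\SQ{\aleph_0}$: given finitely many norm-one vectors $x_1, \dots, x_k \in X$, write $x_i = e_i + f_i$. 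Since $|||\cdot|||_E$ is $\SQ{\aleph_0}$ on $E$, after noting that the vectors $e_i / |||e_i|||_E$ (for those $i$ with $e_i \neq 0$) lie on the unit sphere of $(E, |||\cdot|||_E)$, we can find $y \in E$ with $|||y|||_E = 1$ and $|||e_i \pm y|||_E \le |||e_i|||_E \le 1$ for all relevant $i$ (the standard rescaling argument, cf.\ the remark after Definition~\ref{defi:transfiniteASQ} and Example~\ref{theo:kappaASQnotconc0}); for $i$ with $e_i = 0$ the inequality $|||0 \pm y|||_E = 1$ holds trivially. Then for every $i$,
\begin{equation*}
    |||x_i \pm y||| = |||e_i \pm y|||_E \vee \|f_i\|_F \le 1 \vee \|f_i\|_F = |||x_i||| = 1,
\end{equation*}
so $y$ witnesses the $\SQ{\aleph_0}$ property.

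The main obstacle is securing the decomposition $X = E \oplus F$ with $E$ isomorphic to $\ell_\infty$. The Bessaga--Pełczyński theorem gives that a dual space containing $c_0$ contains $\ell_\infty$ isomorphically; one then needs that a copy of $\ell_\infty$ inside any Banach space is automatically complemented, which follows from the injectivity (separable injectivity / $\mathcal{L}_\infty$ being a $\mathcal{P}_\lambda$-space) of $\ell_\infty$: any isomorphic embedding $\ell_\infty \hookrightarrow X$ admits a bounded projection because $\ell_\infty$ is an injective Banach space, so the identity on the copy extends to a bounded operator $X \to (\text{copy of }\ell_\infty)$. I would also double-check the borderline instance where one only gets a copy of $c$ rather than $\ell_\infty$; but since $\ell_\infty$ itself is always produced in the dual setting, this is not an issue, and in any case $c$ is also complemented and renormable to be $\SQ{\aleph_0}$ (being isomorphic to a space of the form $C_0(X)$ with $X$ non-compact, or directly). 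Everything else—the transfer of $\SQ{\aleph_0}$ under isomorphism, the $\vee$-sum computation, the rescaling trick for vectors landing off the sphere of $E$—is bookkeeping.
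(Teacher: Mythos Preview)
Your approach is essentially the same as the paper's: both invoke the result that a dual space containing $c_0$ contains $\ell_\infty$, use injectivity of $\ell_\infty$ to complement it as $X=E\oplus F$, transport the $\SQ{\aleph_0}$ norm from Example~\ref{ex: ell_infty rigid ASQ} to $E$, and take the $\ell_\infty$-sum with the complement. The paper packages the last step as a citation of Corollary~\ref{cor: infty sum}, whereas you verify it by hand.

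One slip in your hand verification: the intermediate inequality $|||e_i \pm y|||_E \le |||e_i|||_E$ is false (for small $|||e_i|||_E$ the left side is at least $1-|||e_i|||_E$ by the reverse triangle inequality, since $|||y|||_E=1$). What the SQ property applied to the normalisations $\hat e_i=e_i/|||e_i|||_E$ actually gives is $|||\hat e_i\pm y|||_E\le 1$, and then for $t=|||e_i|||_E\le 1$ the convexity estimate
\[
|||e_i\pm y|||_E=|||t(\hat e_i\pm y)+(1-t)(\pm y)|||_E\le t\cdot 1+(1-t)\cdot 1=1
\]
yields the bound you need (this is exactly Lemma~\ref{lem: (r,s)-SQ}, and is the computation hidden inside Proposition~\ref{prop: (r,s)-SQ under ell_infty sum}). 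With this correction your final line $|||x_i\pm y|||=|||e_i\pm y|||_E\vee\|f_i\|_F\le 1$ goes through and the argument is complete.
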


\begin{proof}
    If $X$ is a dual Banach space containing $c_0$, then $X$ contains an isomorphic copy of $\ell_\infty$ \cite[Proposition 2.e.8]{LiTza}. Because of its injectivity, $\ell_\infty$ is complemented in $X$ (c.f. e.g. \cite[Proposition 5.13]{FHHMPZ2001}). Consequently, there is a subspace $Z$ of $X$ so that $X=\ell_\infty \oplus Z$. Consider the norm $\nn\cdot$ on $\ell_\infty$ described in Example \ref{ex: ell_infty rigid ASQ}. Now, consider on $X$ the equivalent norm so that $X=(\ell_\infty,\nn\cdot)\oplus_\infty Z$. Then $X$, endowed with this norm, is $\SQ{\aleph_0}$, because  $(\ell_\infty,\nn\cdot)$ is $\SQ{\aleph_0}$ together with Corollary \ref{cor: infty sum}.
\end{proof}

 We do not know whether $c_0$ has an equivalent $\SQ{\aleph_0}$ renorming and, when $\kappa>\aleph_0$, we do not know if $\ell_\infty(\kappa)$ has an equivalent $\SQ{\kappa}$ renorming. The best that we can say in this direction is the following.

\begin{proposition}
Let $\kappa$ and $\lambda$ be uncountable cardinals. If there exists a $\kappa$-complete ultrafilter $\mathcal{U}$ on $\lambda$. Then $\ell_\infty(\lambda)$ admits an equivalent $SQ_{<\kappa}$ renorming. 
\end{proposition}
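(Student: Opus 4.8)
The idea is to mimic the construction from Example~\ref{ex: ell_infty rigid ASQ}, but transfinitely, using the $\kappa$-complete ultrafilter $\mathcal U$ on $\lambda$ to play the role that the non-principal ultrafilter on $\mathbb N$ played there. For $x \in \ell_\infty(\lambda)$, write $\lim_{\mathcal U}(x)$ for the limit of the bounded net $(x(\alpha))_{\alpha<\lambda}$ along $\mathcal U$, and define
\begin{equation*}
    \nn{x} := |\lim_{\mathcal U}(x)| \vee \bigvee_{\alpha<\lambda}|x(\alpha) - \lim_{\mathcal U}(x)|.
\end{equation*}
First I would check that $\nn\cdot$ is an equivalent norm on $\ell_\infty(\lambda)$: one has $\nn x \le |\lim_{\mathcal U}(x)| + \|x\|_\infty \vee \dots$, and in fact a routine estimate gives $\tfrac12\|x\|_\infty \le \nn x \le 2\|x\|_\infty$ (and $\nn\cdot$ is a norm because $\lim_{\mathcal U}$ is linear). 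This part is purely formal.

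The core of the argument is to verify the $\SQ{\kappa}$ condition. Fix $A \in \mathcal P_{<\kappa}(S_{(\ell_\infty(\lambda),\nn\cdot)})$; so $|A| = \mu < \kappa$. For each $x \in A$ and each $n \in \mathbb N$, set
\begin{equation*}
    A_{x,n} := \{\alpha < \lambda : |x(\alpha) - \lim_{\mathcal U}(x)| < 1/n\},
\end{equation*}
which lies in $\mathcal U$ by definition of the ultralimit. Now I would intersect all of these: since $\mathcal U$ is $\kappa$-complete and there are only $\mu\cdot\aleph_0 < \kappa$ of them, the set $B := \bigcap_{x\in A, n\in\mathbb N} A_{x,n}$ still belongs to $\mathcal U$; in particular $B \neq \emptyset$ and indeed $B$ is infinite (a $\kappa$-complete non-principal ultrafilter on an uncountable set contains no finite sets). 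Pick any $\gamma \in B$ and let $y := e_\gamma$. Then $\|y\|_\infty = 1$ and $\lim_{\mathcal U}(y) = 0$ (because the one-point set $\{\gamma\} \notin \mathcal U$, as $\mathcal U$ is non-principal, so $y$ vanishes on a member of $\mathcal U$), hence $\nn y = 1$. For $x \in A$ and the sign $\pm$, compute
\begin{align*}
    \nn{x \pm y} &= |\lim_{\mathcal U}(x)| \vee |x(\gamma) - \lim_{\mathcal U}(x) \pm 1| \vee \bigvee_{\alpha \neq \gamma}|x(\alpha) - \lim_{\mathcal U}(x)|.
\end{align*}
The first and third terms are each $\le \nn x = 1$, and since $\gamma \in B \subseteq A_{x,n}$ for \emph{every} $n$, we get $x(\gamma) = \lim_{\mathcal U}(x)$, so the middle term equals $|{\pm}1| = 1$. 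Hence $\nn{x \pm y} \le 1$, which (by the remark following Definition~\ref{defi:transfiniteASQ}) is exactly what is needed for $\SQ{\kappa}$.

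**Main obstacle.** There is no deep obstacle here; the one point requiring genuine care is the use of $\kappa$-completeness at the right cardinality. The family $A$ has size $<\kappa$, and for each element we throw in countably many sets $A_{x,n}$, so the total number of sets to be intersected is $|A| \cdot \aleph_0$, which is still $<\kappa$ because $\kappa$ is uncountable (so $\kappa$-completeness — closure under $<\kappa$ intersections — applies) and the intersection lands in $\mathcal U$; in particular it is nonempty, which is all we exploit. One should also note explicitly that a $\kappa$-complete ultrafilter on $\lambda$ is automatically non-principal and countably complete, so that $\gamma \notin \{\alpha : e_\gamma(\alpha) \neq 0\}$-type reasoning and the vanishing of $\lim_{\mathcal U}(e_\gamma)$ are justified. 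Everything else is a direct transcription of Example~\ref{ex: ell_infty rigid ASQ}, where the passage to the countable-completeness-free case required choosing a subsequence along $2\mathbb N$; that trick is unnecessary here precisely because $\mathcal U$ is $\kappa$-complete, which is why we land in the exact $\SQ{\kappa}$ (rather than $\ASQ{\kappa}$) class.
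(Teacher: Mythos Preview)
Your proposal is correct and follows essentially the same route as the paper: the same renorming via the ultralimit, the same sets $A_{x,n}$, and the same use of $\kappa$-completeness to intersect all of them and pick a coordinate $\gamma$ where every $x(\gamma)=\lim_{\mathcal U}(x)$, so that $e_\gamma$ witnesses $\SQ{\kappa}$. One small caveat: a $\kappa$-complete ultrafilter is not \emph{automatically} non-principal (principal ultrafilters are trivially closed under arbitrary intersections); rather, non-principality is part of the paper's convention for ``$\kappa$-complete'' (see the Terminology paragraph), so you should appeal to that convention rather than claim it follows.
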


\begin{proof}
    Define an equivalent norm by the same formula as in Example \ref{example:countablysupported}
    $$\nn x:=|\lim(x)|\vee\bigvee_{\mu\in\lambda}|x(\mu)-\lim(x)|,$$
    where $\lim(x)$ denotes the limit through the ultrafilter $\mathcal{U}$. Similarly as before, if we take $X\in\mathcal{P}_{<\kappa}(S_{\ell_\infty(\lambda)})$, then the sets $$A_{n,x} = \{\mu\in\lambda : |x(\mu) - \lim(x)|<1/n\}\in\mathcal U$$
    for all $n\in\mathbb{N}$ and $x\in X$. So $A:=\{\mu\in\lambda : x(\mu) = \lim(x)\} = \bigcap_{n,x}A_{n,x}\in\mathcal{U}$. If we take $\mu\in A$, then it is easily checked that $\nn{x+e_\mu} = 1$ holds for all $x\in X$.
\end{proof}

This statement is quite unsatisfactory because $\lambda$ must be a large cardinal, at least the first measurable cardinal. Using a variation of this idea using multiple ultrafilters instead of just a fixed one, we obtain another general result which says that, when $X$ contains $c_0(\kappa)$ and $X/c_0(\kappa)$ is somehow small, then $X$ admits an equivalent $\SQ{\kappa}$ norm. This is the main result of this section.

\begin{theorem}\label{theo:renormingkASQ}
Let $\kappa$ be an infinite cardinal of uncountable cofinality. If a Banach space of density character $\kappa$ contains an isomorphic copy of $c_0(\kappa)$, then it admits an equivalent $\SQ{\mathrm{cf}(\kappa)}$ renorming.	
\end{theorem}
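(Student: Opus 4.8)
The plan is to combine the hypothesis $c_0(\kappa) \subseteq X$ with the smallness of $\dens(X) = \kappa$ to place many ``almost disjointly supported'' unit vectors in a suitably renormed copy of $\ell_\infty$-type space sitting inside $X$, indexed by a cofinal family of cardinals below $\kappa$. First I would fix a writing $\kappa = \sup_{\xi < \mathrm{cf}(\kappa)} \kappa_\xi$ with $(\kappa_\xi)$ strictly increasing, and fix an isomorphic copy of $c_0(\kappa)$ in $X$; since $\dens(X) = \kappa$, I can choose a dense subset $D = \{d_\alpha : \alpha < \kappa\}$ of $X$ and partition $\kappa$ into blocks $B_\xi$ with $|B_\xi| = \kappa_\xi$ (or arrange the copy of $c_0(\kappa)$ so that its coordinate blocks $c_0(B_\xi)$ each have enough room to ``absorb'' the finitely-many-coordinates-at-a-time behaviour of points in a dense subspace of density $< \kappa_\xi$). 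The key point, as in the proofs of Examples~\ref{example:countablysupported} and the preceding proposition, is that any element of a subspace of density $< \mathrm{cf}(\kappa)$ only ``sees'' countably-supported-type data that can be dodged; but here we must dodge it inside $X$, not inside an $\ell_\infty(\lambda)$, so we build the renorming by hand.

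The main technical device I expect to use is the one hinted at right before the statement: instead of one fixed ultrafilter, use a family of ultrafilters (or, more elementarily, a carefully chosen Minkowski functional built from the $c_0(\kappa)$ copy and a projection). Concretely, let $P : X \to c_0(\kappa)$ be a bounded (not necessarily projection, just an operator witnessing the copy) map, let $Q$ be the complementary data, and define an equivalent norm on $X$ roughly by
\[
\nn{x} := \max\Bigl\{ \norm{Qx}_X,\ \sup_{\xi < \mathrm{cf}(\kappa)} \abs{\lim_{\mathcal U_\xi} (Px)},\ \sup_{\alpha < \kappa} \abs{(Px)(\alpha) - \lim_{\mathcal U_{\xi(\alpha)}}(Px)} \Bigr\},
\]
where $\mathcal U_\xi$ is a nonprincipal ultrafilter on $B_\xi$ and $\xi(\alpha)$ is the block containing $\alpha$; one checks this is equivalent to $\norm{\cdot}$ because $Px$ has coordinates tending to $0$ along the filter of cofinite sets in each block, so each $\lim_{\mathcal U_\xi}(Px) = 0$ for $x$ in the original $c_0(\kappa)$, and boundedness of $P, Q$ gives the two-sided estimate. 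Then, given a set $A \in \mathcal P_{<\mathrm{cf}(\kappa)}(S_{(X,\nn\cdot)})$, since $|A| < \mathrm{cf}(\kappa)$ the union of the ``essential supports'' of the countably-many-relevant coordinates of $Px$ for $x \in A$ is a set meeting only $< \mathrm{cf}(\kappa)$ of the blocks $B_\xi$; pick a block $B_{\xi_0}$ untouched, pick inside it (using nonprincipality of $\mathcal U_{\xi_0}$) a further coordinate $\mu \in B_{\xi_0}$ avoided by all the finitely-relevant data, and take $y := e_\mu$, an element of the original $c_0(\kappa) \subseteq X$ with $\nn y = 1$. A direct computation in the three coordinates of the max, exactly as in Example~\ref{ex: ell_infty rigid ASQ} (the limit term stays at its old value, the perturbed coordinate $\mu$ contributes $\le 1$, and $Qy = 0$), gives $\nn{x \pm y} \le 1$ for all $x \in A$, so $(X, \nn\cdot)$ is $\SQ{\mathrm{cf}(\kappa)}$.

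The hard part will be making the ``essential support'' argument rigorous: for $x \in S_X$ the vector $Px \in c_0(\kappa)$ genuinely has only countable support, so $|A| < \mathrm{cf}(\kappa)$ forces the union of supports to have size $< \mathrm{cf}(\kappa) \cdot \aleph_0 = \mathrm{cf}(\kappa)$ — wait, that is $\le \mathrm{cf}(\kappa)$, not $< \mathrm{cf}(\kappa)$, so one must be careful — but the union meets each block $B_\xi$, and since there are $\mathrm{cf}(\kappa)$ blocks while $\bigcup_{x\in A}\supp(Px)$ has size $< \mathrm{cf}(\kappa)$ (strictly, as $|A| < \mathrm{cf}(\kappa)$ and $\mathrm{cf}(\kappa)$ is uncountable hence a regular limit of... no: $\mathrm{cf}(\kappa)$ is regular, and a union of $<\mathrm{cf}(\kappa)$ countable sets has size $< \mathrm{cf}(\kappa)$ precisely when $\mathrm{cf}(\kappa) > \aleph_0$, which is our hypothesis), this set cannot meet all $\mathrm{cf}(\kappa)$ blocks; regularity of $\mathrm{cf}(\kappa)$ is exactly what is needed here, and is why the cofinality (rather than $\kappa$ itself) appears in the conclusion. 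The remaining routine checks — equivalence of $\nn\cdot$, and that $\nn y = 1$ for the chosen unit vector $y$ from the copy of $c_0(\kappa)$ — I would dispatch with the same estimates already used in Examples~\ref{example:countablysupported} and \ref{ex: ell_infty rigid ASQ}.
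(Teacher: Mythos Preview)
Your proposal has a genuine gap at its central step. You take a bounded linear $P : X \to c_0(\kappa)$ ``witnessing the copy'' and then argue that, for every $x \in S_X$, the vector $Px$ has countable support, so that the union of supports over a set $A$ of size $<\mathrm{cf}(\kappa)$ misses some block $B_{\xi_0}$. But such a $P$ need not exist: the Hahn--Banach extension of the isometry $Y \cong c_0(\kappa)$ produces an operator $T : X \to \ell_\infty(\kappa)$, and for $x \notin Y$ the element $Tx$ may have full support. A bounded map $X \to c_0(\kappa)$ restricting to the identity on the copy would amount to a bounded projection onto $c_0(\kappa)$, which is not available in general. Once $Tx$ can have uncountable support, your block-avoidance argument collapses: there is no reason any coordinate $\mu$ should satisfy $(Tx)(\mu) = \lim_{\mathcal U_{\xi_0}} (Tx)$ simultaneously for all $x \in A$, and that \emph{exact} equality is precisely what you need for $\nn{x \pm e_\mu} \le 1$ rather than $\le 1+\varepsilon$. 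Fixing the ultrafilters $\mathcal U_\xi$ blockwise in advance cannot repair this, since an ultrafilter on a countable block is only finitely complete while $|A|$ may be any cardinal below $\mathrm{cf}(\kappa)$.

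The paper confronts exactly this obstacle, and it is here that the hypothesis $\dens(X)=\kappa$ is genuinely used (your sketch never invokes it). Rather than fixing ultrafilters beforehand, the paper first pre-selects a family $\{X_\gamma : \gamma < \mathrm{cf}(\kappa)\}$ of subspaces of density $<\kappa$ such that every subspace of density $<\mathrm{cf}(\kappa)$ sits inside some $X_\gamma$. For each fixed $\gamma$, a Stone--\v{C}ech type argument in $(B_{X_\gamma^*},w^*)$---which has weight $<\kappa$ because $\dens(X_\gamma)<\kappa$---shows that for all but $<\kappa$ many $\alpha$ the restriction $T_\alpha|_{X_\gamma}$ is a weak$^*$ accumulation point of the family $\{T_\beta|_{X_\gamma}\}_\beta$, hence equals $\lim_{\mathcal U} T_\beta|_{X_\gamma}$ for some nonprincipal ultrafilter $\mathcal U$ \emph{depending on $\gamma$}. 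That $\gamma$-dependent ultrafilter limit, extended to $X$, furnishes a correcting functional $g(\alpha)\in B_{X^*}$ vanishing on $Y$ with $T_\alpha|_{X_\gamma}=g(\alpha)|_{X_\gamma}$; the norm is $\nn{x}=\|x\|_{X/Y}\vee\bigvee_\alpha|T_\alpha(x)-g(\alpha)(x)|$, and the adaptive (subspace-by-subspace, not block-by-block) choice of ultrafilters is what makes the exact SQ estimate go through.
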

 
\begin{proof}
	Without loss of generality we can suppose that the copy of $c_0(\kappa)$ is isometric. Let $Y\subset X$ be a subspace together with an isometric isomorphism $S:Y\longrightarrow c_0(\kappa)$.
	By Hahn-Banach, there exists an norm-1 operator $T:X\longrightarrow \ell_\infty(\kappa)$ such that $T|_Y = S$.
	
	Now we aim to define a suitable one-to-one mapping $g:\kappa\longrightarrow B_{X^{^\ast}}$ such that all $g(\alpha)$'s vanish on $Y$. After doing so, we define the equivalent norm
	
	$$\nn x := \|x\|_{X/Y} \vee \bigvee_{\alpha<\kappa}|T_\alpha(x)-g(\alpha)(x)| $$ 
	
	\textbf{First step:} $\nn\cdot$ defines an equivalent norm on $X$.
	
	In fact, it is clear that $\nn\cdot$ is a norm and that $\nn\cdot\leq 2 \|\cdot\|$. Now suppose by contradiction that that we cannot obtain the opposite inequality with respect to any fixed constant, then we can find a sequence $(x_n)_{n\in\mathbb N}\subset S_X$ satisfying $\lim_n \nn{x_n}= 0$. This implies that $\lim_n \|x_n\|_{X/Y} = 0$, so we can find elements $y_n\in Y$ such that $\lim_n\|x_n-y_n\|=0$. This, as before, shows that $\lim_n\nn{x_n-y_n}=0$.
	
	Since $\lim_n \nn{x_n}= 0$, we conclude that $\lim_n\nn{y_n}=0$, but, since $y_n\in Y$, then $\nn{y_n}= \bigvee_{\alpha<\kappa}|T_\alpha(y_n)| = \|y_n\|$, hence $\lim_n\|y_n\|=0$. This, together with $\lim_n \|x_n-y_n\|=0$, implies that $\lim_n\|x_n\|=0$, which is a contradiction.\vspace{3mm}

\textbf{Second step:} If $T_\beta(x) = g(\beta)(x)$, then $\nn{x+tS^{-1}(e_\beta)}= \nn x\vee |t|$.

In fact, call $u_\beta := S^{-1}(e_\beta)$ and observe that
$$\nn x = \|x\|_{X/Y} \vee \bigvee_{\alpha\neq\beta}|T_\alpha(x)-g(\alpha)(x)|.$$
Thanks to the fact that $g(\beta)(u_\beta)=0$, we deduce that $|T_\beta(x+tu_\beta)-g(\beta)(x+tu_\beta)|=|t|$. Notice also that $T_\alpha(u_\beta) = g(\alpha)(u_\beta)=0$. Therefore
\begin{align*}
    \nn{x+tu_\beta} & = \|x\|_{X/Y} \vee |T_\beta(x+tu_\beta)-g(\beta)(x+tu_\beta)| \vee \bigvee_{\alpha\neq\beta}|T_\alpha(x)-g(\alpha)(x)|=\\
    & =\nn x\vee|t|.
\end{align*}

\textbf{Third step:} If $Z\subset X$ is a subspace with $\dens(Z)<\kappa$, then, for all $\alpha\in\kappa$, except for $<\kappa$ many $\alpha$'s, there exist functionals $g_\alpha\in B_{X^*}$ that vanish on $Y$ and such that $T_\alpha(x)=g_\alpha(x)$ holds for all $x\in Z$.

In fact, consider the continuous function $\phi: \beta\kappa \longrightarrow B_{Z^*}$ given by 
\begin{equation*}
    \phi(\mathcal{U})(z)= \lim_{\mathcal{U}}T_\gamma(z),
\end{equation*}
where the limit is taken with respect to $\gamma$, and the topology on $B_{Z^*}$ is the weak$^*$ topology. Notice that, for $\alpha<\kappa$, if a non-principal ultrafilter $\mathcal{U}_\alpha$ in $\kappa$ satisfies that $\phi(\mathcal{U_\alpha}) = \phi(\alpha)$, then $g_\alpha:=\lim_{\mathcal{U}_\alpha}T_\gamma$ satisfies the desired conditions. So it is enough to show that for all but less than $\kappa$ many $\alpha<\kappa$ such non-principal ultrafilter exists. Suppose for contradiction, that this is not the case. This means that there is a set $A\subset \kappa$ of cardinality $\kappa$ such that $\phi^{-1}\{\phi(\alpha)\}$ contains no non-principal ultrafilters for all $\alpha\in A$. This means that $\phi^{-1}\{\phi(\alpha)\}$ consists only of isolated points of $\beta\kappa$, but it is also a compact set by continuity. Hence each set $\phi^{-1}\{\phi(\alpha)\}$ is finite, for $\alpha\in A$. This implies that $\{\phi(\alpha) : \alpha\in A\}$ has cardinality $\kappa$. Now we prove that each point $\phi(\alpha)$ is an isolated point of the range $\phi(\beta\kappa)\subset B_{Z^*}$. This is a contradiction with the fact that $B_{Z^*}$ has weight less than $\kappa$ since $Z$ had density less than $\kappa$.  So suppose that $\phi(\alpha)$ is not isolated in that range. Since $\kappa$ is dense in $\beta\kappa$, we must have $$\phi(\alpha)\in \overline{\{\phi(\beta) : \beta<\kappa, \phi(\beta)\neq\phi(\alpha)\}}.$$
Consider
$$\mathcal{F} = \{B\subset \kappa : \exists W\text{ neighborhood of }\phi(\alpha) : \kappa\cap \phi^{-1}(W\setminus\{\phi(\alpha)\})\subset B \}.$$
This is a filter of subsets of $\kappa$ that contains all complements of finite sets, and satisfies $\{\phi(\alpha)\} = \bigcap_{B\in\mathcal{F}}\overline{\phi(B)}$. There is a non-principal ultrafilter $\mathcal{U}$ on $\kappa$ that contains $\mathcal{F}$, and we have $$\phi(\mathcal{U}) = \phi(\lim_\mathcal{U}\beta) = \lim_\mathcal{U}\phi(\beta)  = \phi(\alpha).$$
This contradicts that $\phi^{-1}\{\alpha\}$ contained no non-principal ultrafilter.\vspace{3mm}



\textbf{Fourth step:} Definition of the map $g:\kappa\longrightarrow B_{X^{^\ast}}$.

Let $\{X_\gamma : \gamma<\text{cf}(\kappa)\}$ be a family consisting of subspaces of $X$ of density character strictly less than $\kappa$, such that every subspace of $X$ with density character strictly less than $\text{cf}(\kappa)$ is contained in some $X_\gamma$. Using the previous step, for each $\gamma<\text{cf}(\kappa)$, we can inductively choose $\alpha(\gamma)<\kappa$ and $g_\gamma\in B_{X^*}$, such that $g_\gamma$ vanishes on $Y$, $T_{\alpha(\gamma)}|_{X_\gamma} = g_{\alpha(\gamma)}|_{X_\gamma}$ and $\alpha(\gamma')\neq \alpha(\gamma)$ for $\gamma'<\gamma$. It is now legitimate to define 
$$g(\alpha):=\bigg\{\begin{array}{ll}
    g_{\alpha(\gamma)} & \text{ if }\alpha= \alpha(\gamma)\text{ for some }\gamma<\text{cf}(\kappa),\\
    0 & \text{ if }\alpha\not\in\{\alpha(\gamma): \gamma<\text{cf}(\kappa)\}
\end{array}$$\vspace{1mm}

Finally, we can conclude the proof of the theorem. For this purpose, fix a subspace $Z\subset X$ with $\dens(Z)<\text{cf}(\kappa)$ and find $\gamma<\text{cf}(\kappa)$ such that $Z\subset X_\gamma$. By construction, $T_{\alpha(\gamma)}(x)=g_{\alpha(\gamma)}(x)$ holds for all $x\in X_\gamma$. Thanks to the second step, we can find an element $y\in S_{(X,\nn\cdot)}$ such that $\nn{x+ty}\le\nn{x}\vee|t|$ holds for all $x\in X_\gamma$ and $t\in\mathbb R$.
\end{proof}
 
As an application of the above results we get the following.

\begin{corollary}
	$\ell_\infty/c_0$ admits a $\SQ{\mathrm{cf}(\mathfrak{c})}$ equivalent norm. In particular, a SQ$_{\aleph_0}$ equivalent norm.
\end{corollary}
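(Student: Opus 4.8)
The plan is to obtain the statement as a direct application of Theorem~\ref{theo:renormingkASQ} with $\kappa := \mathfrak{c}$. Three hypotheses have to be checked: (a) $\mathfrak{c}$ has uncountable cofinality; (b) $\dens(\ell_\infty/c_0) = \mathfrak{c}$; and (c) $\ell_\infty/c_0$ contains an isomorphic copy of $c_0(\mathfrak{c})$. Point (a) is K\"onig's theorem, which gives $\mathrm{cf}(\mathfrak{c}) > \aleph_0$. For (b), the bound $\dens(\ell_\infty/c_0) \le |\ell_\infty/c_0| \le |\ell_\infty| = \mathfrak{c}$ is immediate, and the reverse inequality follows from (c), since $c_0(\mathfrak{c})$ has density character $\mathfrak{c}$.

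The only real content is (c), which I would get from a classical construction. Fix an almost disjoint family $\{A_\alpha : \alpha < \mathfrak{c}\}$ of infinite subsets of $\mathbb{N}$ (so that $A_\alpha \cap A_\beta$ is finite whenever $\alpha \ne \beta$), whose existence is standard; let $q \colon \ell_\infty \to \ell_\infty/c_0$ be the quotient map and set $u_\alpha := q(\chi_{A_\alpha})$. Recalling that the quotient norm satisfies $\|q(f)\| = \limsup_n |f(n)|$, one checks that for distinct $\alpha_1,\dots,\alpha_m$ and scalars $t_1,\dots,t_m$ the function $\sum_i t_i\chi_{A_{\alpha_i}}$ attains each value $t_i$ on an infinite set (after discarding the finitely many overlap points) and vanishes off $\bigcup_i A_{\alpha_i}$, whence $\|\sum_i t_i u_{\alpha_i}\| = \max_i |t_i|$. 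Therefore $\cspn\{u_\alpha : \alpha < \mathfrak{c}\}$ is isometric to $c_0(\mathfrak{c})$.

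With (a), (b), (c) established, Theorem~\ref{theo:renormingkASQ} yields an equivalent $\SQ{\mathrm{cf}(\mathfrak{c})}$ norm on $\ell_\infty/c_0$. For the second assertion, observe that $\mathrm{cf}(\mathfrak{c}) \ge \aleph_1$, so every set of cardinality $<\aleph_1$ — in particular every finite set — has cardinality $<\mathrm{cf}(\mathfrak{c})$; hence the same norm is $\SQ{\aleph_1}$, and a fortiori $\SQ{\aleph_0}$. I do not expect any genuine obstacle: the argument is essentially bookkeeping, and the almost-disjoint-family computation in step (c) is entirely routine.
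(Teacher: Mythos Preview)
Your proposal is correct and follows exactly the paper's approach: verify $\mathrm{cf}(\mathfrak c)>\aleph_0$, embed $c_0(\mathfrak c)$ into $\ell_\infty/c_0$ via an almost disjoint family of size $\mathfrak c$, and apply Theorem~\ref{theo:renormingkASQ}; you simply supply more details (the density computation and the explicit isometric embedding) than the paper's two-line proof. One small notational slip: the corollary asks for SQ$_{\aleph_0}$ (sets of cardinality $\le\aleph_0$), which is precisely the $\SQ{\aleph_1}$ you already obtain, so your final ``a fortiori $\SQ{\aleph_0}$'' (which via the macro means SQ$_{<\aleph_0}$, i.e.\ finite sets) overshoots downward---just stop at $\SQ{\aleph_1}$.
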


\begin{proof}
$\ell_\infty/c_0$ contains a subspace isometric to $c_0(\mathfrak{c})$, coming from an almost disjoint family of cardinality $\mathfrak{c}$ and $\text{cf}(\mathfrak{c})>\aleph_0$.
\end{proof}

\section{Stability results}\label{section:stabilityresults}

In this section we aim to produce more examples of Banach spaces which are transfinite (A)SQ.

\subsection{Direct sums}

It is known that the only possible sums which may preserve ASQ are the $c_0$ and the $\ell_\infty$ sums \cite[Theorem~3.1]{HLLN2019}. Because of that, we will only focus on these two cases.

\begin{theorem}\label{thm: ell_infty sum}
    Let $\{X_\alpha:\alpha\in\mathscr A\}$ be a family of Banach spaces and $\kappa$ an infinite cardinal. If, for every $\varepsilon>0$, there exists $\beta\in\mathscr A$ such that, for every set $A\in\mathcal{P}_\kappa(S_{X_\beta})$, there is $y\in S_{X_\beta}$ satisfying
    \begin{equation*}
        \|x+ y\|\le1+\varepsilon\text{ for all }x\in A,
    \end{equation*}
    then $\ell_\infty(\mathscr A,X_\alpha)$ is $\ASQ{\kappa}$. Moreover, if $|\mathscr A|<\text{cf}(\kappa)$ and $\lambda^{|\mathscr A|}<\kappa$ holds for every cardinal $\lambda<\kappa$, then the converse holds too.
\end{theorem}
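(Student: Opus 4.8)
The forward implication is the routine direction. The plan is to fix $A \in \mathcal{P}_\kappa(S_{\ell_\infty(\mathscr A, X_\alpha)})$ and $\eps > 0$. Choose $\beta \in \mathscr A$ witnessing the hypothesis for $\eps$. Consider the "slice" $A_\beta := \{ x(\beta)/\|x(\beta)\| : x \in A,\ x(\beta) \neq 0 \} \in \mathcal P_\kappa(S_{X_\beta})$, pick $y \in S_{X_\beta}$ with $\|u + y\| \le 1+\eps$ for all $u \in A_\beta$, and set $\tilde y := y \cdot e_\beta \in S_{\ell_\infty(\mathscr A, X_\alpha)}$. For $x \in A$, $\|x + \tilde y\| = \max\{ \sup_{\alpha \ne \beta} \|x(\alpha)\|,\ \|x(\beta) + y\| \}$; the first term is $\le 1$, and for the second, if $x(\beta) = 0$ it is $1$, while if $x(\beta) \ne 0$ it is at most $\|x(\beta)\| \cdot \|x(\beta)/\|x(\beta)\| + y/\|x(\beta)\|\,\| \le \|x(\beta) + y\| \le 1+\eps$ using $\|x(\beta)\| \le 1$ and convexity (more carefully: $\| x(\beta) + y\| \le \|x(\beta)\| \cdot \| x(\beta)/\|x(\beta)\| + y\| + (1 - \|x(\beta)\|)\|y\| \le (1+\eps)$). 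Hence $\ell_\infty(\mathscr A, X_\alpha)$ is $\ASQ{\kappa}$.

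For the converse, assume $|\mathscr A| < \mathrm{cf}(\kappa)$ and $\lambda^{|\mathscr A|} < \kappa$ for every $\lambda < \kappa$, and that $Z := \ell_\infty(\mathscr A, X_\alpha)$ is $\ASQ{\kappa}$. Suppose for contradiction the conclusion fails: there is $\eps_0 > 0$ such that for every $\beta \in \mathscr A$ there is a "bad" set $A^\beta \in \mathcal P_{\kappa}(S_{X_\beta})$ for which no $y \in S_{X_\beta}$ satisfies $\|u + y\| \le 1 + \eps_0$ for all $u \in A^\beta$. (One should first reduce to $\mathcal P_\kappa$ versus $\mathcal P_{<\kappa}$: since $\kappa$ has uncountable cofinality — it must, as $|\mathscr A| < \mathrm{cf}(\kappa)$ and $\mathscr A$ is infinite — and more to the point $\lambda^{|\mathscr A|}<\kappa$ forces $\kappa$ regular, a set of size $\kappa$ cannot be swallowed by a set of size $<\kappa$; I will argue the bad set can be taken of size exactly $\kappa$, or handle both cardinalities simultaneously.) The idea is now to amalgamate these bad sets across $\mathscr A$ into a single bad set in $Z$. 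Form $A := \{ \phi \in S_Z : \phi(\beta) \in A^\beta \cup \{0\} \text{ for all } \beta,\ \phi(\beta) \ne 0 \text{ for at least one } \beta \}$; more precisely, take $A$ to be the set of all $\phi$ of the form $\phi(\beta) = u_\beta$ ranging over all choices $(u_\beta)_{\beta \in \mathscr A} \in \prod_\beta (A^\beta \cup \{0\})$ with at least one nonzero coordinate (note $\|\phi\| = 1$ automatically since each $u_\beta \in S_{X_\beta} \cup \{0\}$). The cardinality bound is exactly where the hypothesis $\lambda^{|\mathscr A|} < \kappa$ enters: $|A| \le \kappa^{|\mathscr A|}$, which is $< \kappa$ if each $|A^\beta| = \lambda < \kappa$ and $\kappa$ is not attained — wait, one needs $|A| < \kappa$, so one should take each $A^\beta$ of size $\lambda_\beta < \kappa$, and then $|A| \le \prod_\beta \max(|A^\beta|,2) \le \lambda^{|\mathscr A|} < \kappa$ where $\lambda = \sup_\beta |A^\beta| < \kappa$ (here $\sup < \kappa$ uses $|\mathscr A| < \mathrm{cf}(\kappa)$). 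So $A \in \mathcal P_{<\kappa}(S_Z)$, hence $A \in \mathcal P_\kappa(S_Z)$.

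Apply the $\ASQ{\kappa}$ property of $Z$ with this $A$ and with $\eps := \eps_0$: there is $y \in S_Z$ with $\|\phi + y\| \le 1 + \eps_0$ for all $\phi \in A$. Since $\|y\| = 1$, pick $\beta$ with $\|y(\beta)\|$ close to $1$ — actually we need $\|y(\beta)\| = 1$ or close; this is the delicate point, since the sup over $\mathscr A$ need not be attained. Here is where I expect the main obstacle: the witness $y \in S_Z$ might spread its norm across infinitely many coordinates without being close to norm one on any single $X_\beta$, so one cannot directly extract a single-coordinate contradiction. The remedy should be to work not with a single bad set per coordinate but to build $A$ so that the contradiction is forced regardless: namely, having chosen $y$ with $\|y\| = 1$, for each $\beta$ pick $u_\beta \in A^\beta$ realizing (up to a small slack) the failure $\|u_\beta + y(\beta)/\|y(\beta)\|\,\| > 1 + \eps_0$ when $\|y(\beta)\|$ is not tiny — but $y$ is chosen \emph{after} $A$, so instead one must put \emph{all} the bad sets into $A$ from the start, which is what the product construction does. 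Then for the chosen $y$, take any $\beta$ with $\|y(\beta)\| > 1 - \delta$ for suitable $\delta = \delta(\eps_0)$ (such $\beta$ exists since $\sup_\beta \|y(\beta)\| = 1$), let $u_\beta \in A^\beta$ be a point with $\|u_\beta + y(\beta)\| > 1 + \eps_0 - (\text{error})$ — such a point exists because otherwise $\bar y := y(\beta)/\|y(\beta)\|$ would be a good vector for $A^\beta$ up to a controllable slack, contradicting badness — and let $\phi \in A$ be the element with $\phi(\beta) = u_\beta$ and $\phi(\alpha) = 0$ for $\alpha \ne \beta$. Then $\|\phi + y\| \ge \|u_\beta + y(\beta)\| > 1 + \eps_0 - (\text{error})$, and choosing $\delta$ and the initial slack parameters small enough relative to $\eps_0$ makes this strictly exceed $1 + \eps_0$ — wait, we need $> 1+\eps_0$ to contradict $\le 1 + \eps_0$; so one should run the argument with $\eps = \eps_0/2$ in the application of $\ASQ{\kappa}$ and badness threshold $\eps_0$, giving genuine strict inequality $\|\phi + y\| > 1 + \eps_0/2$, the desired contradiction. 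Cleaning up the choice of slack parameters and the $\delta$–$\eps_0$ bookkeeping is the only real work; the cardinal arithmetic ($\lambda^{|\mathscr A|} < \kappa$, $\sup_\beta |A^\beta| < \kappa$ via $|\mathscr A| < \mathrm{cf}(\kappa)$) is exactly tailored so that $A$ lands in $\mathcal P_{<\kappa}$.
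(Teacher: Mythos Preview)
Your approach is essentially the same as the paper's in both directions: project to a coordinate and use convexity for the forward implication, and for the converse take a product of ``bad'' sets $A_\alpha$ across all coordinates, bound its size via $(\sup_\alpha|A_\alpha|)^{|\mathscr A|}<\kappa$, apply $\ASQ{\kappa}$ with $\eps_0/2$, pick $\beta$ with $\|y(\beta)\|>1-\eps_0/2$, and derive a contradiction. The paper streamlines the converse by using the plain product $\prod_\alpha A_\alpha\subset S_Z$ (no $\{0\}$ added, no single-coordinate $\phi$): since \emph{every} $x$ in this product satisfies $\|x(\beta)+y(\beta)/\|y(\beta)\|\|\le 1+\eps_0$, the normalized $y(\beta)$ is good for $A_\beta$, contradicting badness directly. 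One side remark to drop: the hypotheses do \emph{not} force $\kappa$ to be regular (e.g.\ under GCH take $\kappa=\aleph_{\omega_1}$, $|\mathscr A|=\aleph_0$), but regularity is never used---only $|\mathscr A|<\mathrm{cf}(\kappa)$ to get $\sup_\alpha|A_\alpha|<\kappa$.
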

\begin{proof}
    Fix $A\in\mathcal{P}_{<\kappa}(S_{\ell_\infty(\mathscr A,X_\alpha)})$ and $\varepsilon>0$. Find $\beta\in\mathscr A$ as in the assumption. Then there exists $y\in S_{X_{\beta}}$ satisfying
    \begin{equation*}
        \|x(\beta)+ y\|\le(1+\varepsilon)(\|x(\beta)\|\vee 1)=1+\varepsilon\text{ for all }x\in A.
    \end{equation*}
    We conclude that
    \begin{equation*}
        \|x+ y\cdot e_\beta\|_\infty=\bigvee_{\alpha\in\mathscr A\setminus\{\beta\}}\|x(\alpha)\|\vee\|x(\beta)+ y\|\le 1\vee (1+\varepsilon)=1+\varepsilon
    \end{equation*}
    holds for every $x\in A$. Hence $\ell_\infty(\mathscr A,X_\alpha)$ is $\ASQ{\kappa}$.
    
    For the additional part, suppose that $\ell_\infty(\mathscr A,X_\alpha)$ is $\ASQ{\kappa}$ and, by contradiction, that there exists $\varepsilon>0$ such that for every $\alpha\in\mathscr A$ there exists a set $A_\alpha\in\mathcal{P}_{<\kappa}(S_{X_\alpha})$ such that for every $y\in S_{X_\alpha}$ there is $x\in A_\alpha$ satisfying
    \begin{equation*}
        \text{either }\|x+y\|>1+\varepsilon\text{ or }\|x-y\|>1+\varepsilon.
    \end{equation*}
    Notice that
    \begin{equation*}
        \left|\prod_{\alpha\in\mathscr A}A_\alpha\right|\le\left(\sup_{\alpha\in\mathscr A}|A_\alpha|\right)^{|\mathscr A|}<\kappa,
    \end{equation*}
    where the last inequality follows from observing that $\sup_{\alpha\in\mathscr A}|A_\alpha|<\kappa$ since $|\mathscr A|<\text{cf}(\kappa)$ and $\lambda^{|\mathscr A|}<\kappa$ hold by hypothesis for every cardinal $\lambda<\kappa$. Since $\ell_\infty(\mathscr A,X_\alpha)$ is $\ASQ{\kappa}$, we can find $y\in S_{\ell_\infty(X_\alpha)}$ such that
    \begin{equation*}
        \|x+ y\|_\infty\le1+\varepsilon/2\text{ for every }x\in\prod_{\alpha\in\mathscr A}A_\alpha.
    \end{equation*}
    Find $\beta\in\mathscr A$ such that $\|y(\beta)\|\ge1-\varepsilon/2$. Then, for every $x\in \prod_{\alpha\in\mathscr A}A_\alpha$, we get
    \[\begin{split}
        1+\varepsilon/2\ge\|x+ y\|_\infty\ge\|x(\beta)+ y(\beta)\|& \ge\left\|x(\beta)+\frac{y(\beta)}{\|y(\beta)\|}\right\|-\left\|y(\beta)-
        \frac{y(\beta)}{\|y(\beta)\|}\right\|\\
        & \ge\left \|x(\beta)+\frac{y(\beta)}{\|y(\beta)\|}\right\|-\varepsilon/2.
    \end{split}\]
    This implies that $\|x(\beta)+ y(\beta)/\|y(\beta)\|\|\le1+\varepsilon$, which is a clear contradiction since it holds for every $x(\beta)\in A_\beta$.
\end{proof}

\begin{corollary}\label{cor: infty sum}
    Let $X$ and $Y$ be Banach spaces and $\kappa$ an infinite cardinal. Then $X\oplus_\infty Y$ is $\AandSQ{\kappa}$ if and only if either $X$ or $Y$ is $\AandSQ{\kappa}$.
\end{corollary}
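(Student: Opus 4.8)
The plan is to deduce Corollary~\ref{cor: infty sum} from Theorem~\ref{thm: ell_infty sum} together with an analysis of the SQ case, treating the two-summand situation as $\ell_\infty(\mathscr A, X_\alpha)$ with $\mathscr A = \{1,2\}$, $X_1 = X$, $X_2 = Y$. The ASQ direction is essentially immediate: if $X$ (say) is $\ASQ{\kappa}$, then for every $\e > 0$ the index $\beta = 1$ witnesses the hypothesis of Theorem~\ref{thm: ell_infty sum} (using Proposition~\ref{prop:chartransfASQ} or, for finite/infinite $\kappa$, directly from the definition to pass between $\mathcal P_\kappa$ and $\mathcal P_{<\kappa}$ conditions with a small $\e$ loss), so $X \oplus_\infty Y$ is $\ASQ{\kappa}$. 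Conversely, $|\mathscr A| = 2$ is finite, hence $|\mathscr A| < \mathrm{cf}(\kappa)$ and $\lambda^{|\mathscr A|} = \lambda^2 < \kappa$ hold for every infinite cardinal $\kappa$ and every $\lambda < \kappa$ (and the finite cases are trivial), so the "moreover" part of Theorem~\ref{thm: ell_infty sum} applies and gives that if $X \oplus_\infty Y$ is $\ASQ{\kappa}$ then one of the summands satisfies the $(1+\e)$-condition for all $\e$, i.e. is $\ASQ{\kappa}$.

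For the SQ part I would argue directly rather than quoting the theorem, since Theorem~\ref{thm: ell_infty sum} is stated only for the ASQ property. Suppose $X$ is $\SQ{\kappa}$; given $A \in \mathcal P_{<\kappa}(S_{X \oplus_\infty Y})$, the projections $\{x(1) : x \in A, x(1) \neq 0\}$ normalized lie in $S_X$ and form a set of cardinality $< \kappa$, so there is $y \in S_X$ with $\|x(1)/\|x(1)\| \pm y\| \le 1$ for all relevant $x$; then $\|x(1) \pm \|x(1)\| y\| \le \|x(1)\|$, and since $\|\|x(1)\| y\| = \|x(1)\| \le 1$ one checks $\|x \pm y \cdot e_1\|_\infty = \bigvee_{\alpha \neq 1}\|x(\alpha)\| \vee \|x(1) \pm \|x(1)\|y\| \le 1$ — wait, this needs $y\cdot e_1$ to have norm one, which it does, but I should instead simply take $y \in S_X$ and note $\|x(1) \pm y\| \le \|x(1)\| \vee 1 = 1$ directly from the $\SQ{\kappa}$ property applied to the set $\{x(1)/\|x(1)\|\} \cup \{$arbitrary elements$\}$, giving $\|x(1) + y\| \le \|x(1)\| + $ something — cleanest is: apply $\SQ{\kappa}$ of $X$ to $\{x(1)/\|x(1)\| : x \in A, x(1)\ne 0\}$ to get $y\in S_X$, then by homogeneity $\|x(1) \pm \|x(1)\|\,y\|\le \|x(1)\|\le 1$, so with $z := y\cdot e_1 \in S_{X\oplus_\infty Y}$ we get $\|x \pm z\| \le 1$ only if also $\|x(1)\pm y\|\le 1$, which does not follow. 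So the honest route is to use the reformulation of SQ allowing $\|x\pm y\| \le 1$: take $y \in S_X$ with $\|x(1)/\|x(1)\| \pm y\| = 1$; this does not immediately control $\|x(1) \pm y\|$. The correct standard fix is that for SQ spaces one may also assume the witness $y$ works with arbitrary scalars, i.e. $\|x(1) + ty\| \le \|x(1)\| \vee |t|$; this is the content of Proposition~\ref{prop:chartransfASQ}(ii) with $\e = 0$ (valid since $\kappa$ uncountable — and for $\kappa$ finite it is elementary), applied to $Y = \overline{\spn}\{x(1) : x \in A\}$, yielding $y \in S_X$ with $\|x(1) + ty\|\le \|x(1)\|\vee|t|$ for all $x\in A$, $t \in \mathbb R$; then $\|x \pm y\cdot e_1\|_\infty \le \bigvee_{\alpha}\|x(\alpha)\| \vee (\|x(1)\|\vee 1) = 1$.

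For the converse in the SQ case, note that $\SQ{\kappa}$ implies $\ASQ{\kappa}$, so by the ASQ converse one summand, say $X$, is $\ASQ{\kappa}$; but one must upgrade to $\SQ{\kappa}$. Here I would use the rigidity available for SQ: if $X\oplus_\infty Y$ is $\SQ{\kappa}$ and $y$ witnesses squareness for a set $A \subset S_{X\oplus_\infty Y}$ with $\|y(1)\| = 1$ (which can be arranged by including in $A$ an element supported on the summand of larger projection norm, or by a separate argument), then $y(1)$ witnesses $\SQ{\kappa}$ for the corresponding set in $S_X$; the point is that in the $\ell_\infty$-sum the witness must attain norm one on at least one coordinate, and by enlarging $A$ one forces it to be the same coordinate throughout — this is where I expect the only real subtlety. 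Concretely: fix the summand, say $X$, given by the ASQ converse; given $B \in \mathcal P_{<\kappa}(S_X)$, apply $\SQ{\kappa}$ of $X\oplus_\infty Y$ to $\{(b,0) : b \in B\}$ to obtain $y = (y(1),y(2))$ with $\|(b,0) \pm (y(1),y(2))\|_\infty = \|b \pm y(1)\| \vee \|y(2)\| \le 1$ for all $b\in B$, forcing both $\|b \pm y(1)\|\le 1$ and allowing $\|y(1)\|$ possibly $< 1$; if $\|y(1)\| = 1$ we are done with witness $y(1)$, and if $\|y(1)\| < 1$ then $\|y(2)\| = 1$ and $\|b \pm y(1)\| \le 1$ with $b\in S_X$ forces (by strict convexity-type reasoning? no) — actually $\|b + y(1)\|\le 1 = \|b\|$ with any $y(1)$ of norm $\le 1$ is still a genuine squareness-type inequality, but we need a \emph{sphere} vector. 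The resolution: since $X$ is $\ASQ{\kappa}$ hence contains no extreme points asymptotically, combined with $\|b\pm y(1)\|\le 1$ for all $b$ in an arbitrarily large set, a standard compactness/rescaling argument (replace $y(1)$ by $y(1)/\|y(1)\|$ and absorb the error using that $B$ was arbitrary of size $<\kappa$, or invoke the $\e=0$ version of Proposition~\ref{prop:chartransfASQ}) produces a norm-one witness. I expect this last normalization step to be the main obstacle, and I would handle it by the $\e=0$ case of Proposition~\ref{prop:chartransfASQ} applied inside $X$, which is exactly designed to pass between approximate and exact squareness witnesses.
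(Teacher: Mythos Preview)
Your treatment of the ASQ case is correct and matches the paper's: with $\mathscr A = \{1,2\}$ one has $|\mathscr A| < \mathrm{cf}(\kappa)$ and $\lambda^2 = \lambda < \kappa$ for every infinite $\lambda < \kappa$, so both directions of Theorem~\ref{thm: ell_infty sum} apply. Your SQ forward direction, after some detours, also lands on the right idea; the paper phrases it more cleanly by simply observing that the forward half of the proof of Theorem~\ref{thm: ell_infty sum} goes through verbatim with $\varepsilon = 0$ (the step $\|x(\beta) + y\| \le 1$ for $x(\beta)\in B_{X_\beta}$ follows from $\|z \pm y\| \le 1$ for $z := x(\beta)/\|x(\beta)\|$ by convexity, without invoking Proposition~\ref{prop:chartransfASQ}).

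The SQ converse, however, has a genuine gap. Your plan is to first use the ASQ converse to pin down a summand, say $X$, and then upgrade $X$ from $\ASQ{\kappa}$ to $\SQ{\kappa}$ by applying $\SQ{\kappa}$ of the sum to sets of the form $\{(b,0): b \in B\}$. This cannot work: the witness $y = (y_1, y_2)$ you obtain may have $\|y_1\| < 1$ (indeed $y_1$ could even be $0$), and nothing prevents the squareness from being carried entirely by the $Y$-coordinate --- if $Y$ happens to be $\SQ{\kappa}$ while $X$ is merely $\ASQ{\kappa}$, the sum is $\SQ{\kappa}$ yet $X$ is not, so the summand you singled out via the ASQ converse can simply be the wrong one. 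Your proposed fix via ``the $\e = 0$ case of Proposition~\ref{prop:chartransfASQ} applied inside $X$'' is circular: that proposition is a \emph{characterisation} of $\SQ{\kappa}$, not a device for manufacturing sphere witnesses from sub-sphere ones.

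The paper's route avoids this by running the \emph{second} half of the proof of Theorem~\ref{thm: ell_infty sum} with $\varepsilon = 0$, which works precisely because $\mathscr A$ is finite. Assume neither $X$ nor $Y$ is $\SQ{\kappa}$, take bad sets $A_1 \in \mathcal P_{<\kappa}(S_X)$ and $A_2 \in \mathcal P_{<\kappa}(S_Y)$, and apply $\SQ{\kappa}$ of the sum to $A_1 \times A_2 \subset S_{X \oplus_\infty Y}$ (cardinality $< \kappa$). The resulting witness $y$ has $\|y(1)\| \vee \|y(2)\| = 1$, and since there are only two coordinates the maximum is attained, say $\|y(1)\| = 1$; then $y(1) \in S_X$ satisfies $\|a \pm y(1)\| \le 1$ for every $a \in A_1$, contradicting the choice of $A_1$. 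The key point you missed is that one must test the sum against bad sets in \emph{both} coordinates simultaneously, so that whichever coordinate carries the norm of the witness yields a contradiction.
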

\begin{proof}
    Notice that, with the notation from the statement of Theorem~\ref{thm: ell_infty sum}, $|\mathscr A|=2<\text{cf}(\kappa)$ and that $\lambda^{|A|}=\lambda\cdot\lambda=\lambda<\kappa$ holds for every $\lambda<\kappa$. Therefore we can apply both directions of Theorem~\ref{thm: ell_infty sum}, hence the ASQ case follows. For the SQ case, notice that the first half of the proof of Theorem~\ref{thm: ell_infty sum} holds also if we choose $\varepsilon=0$. Moreover, the second half of the proof of Theorem~\ref{thm: ell_infty sum} holds for $\varepsilon=0$ too if $\mathscr A$ is finite.
\end{proof}

It is know that, given any sequence of Banach spaces $\{X_n:n\in\mathbb N\}$, the Banach space $c_0(\mathbb N,X_n)$ is always ASQ \cite[Example~3.1]{ALL2016}. A transfinite generalisation of this result is the following.

\begin{proposition}
    Let $\{X_\alpha:\alpha\in\mathscr A\}$ be an uncountable family of Banach spaces. Then the Banach space $c_0(\mathscr A,X_\alpha)$ is $\SQ{\vert \mathscr A \vert}$.
\end{proposition}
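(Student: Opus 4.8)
The plan is to follow the scheme of Example~\ref{example:countablysupported}, exploiting that elements of a $c_0$-sum have at most countable support. Set $\kappa := |\mathscr A|$, which is uncountable by hypothesis, and $X := c_0(\mathscr A, X_\alpha)$, where we assume each $X_\alpha$ to be non-trivial. According to Definition~\ref{defi:transfiniteASQ}, what I must show is that for every $A \in \mathcal{P}_{<\kappa}(S_X)$ there is $y \in S_X$ with $\|x \pm y\| \le 1$ for all $x \in A$; by the remark following that definition this automatically upgrades to $\|x \pm y\| = 1$.

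First I would recall that, for each $x \in X$, the support $\supp(x) = \{\alpha \in \mathscr A : x(\alpha) \neq 0\}$ is at most countable, since $\{\alpha : \|x(\alpha)\| > 1/n\}$ is finite for every $n \in \mathbb N$. Hence, given $A \in \mathcal{P}_{<\kappa}(S_X)$, the set $S := \bigcup_{x \in A} \supp(x)$ has cardinality at most $|A| \cdot \aleph_0 = \max(|A|, \aleph_0)$, which is $< \kappa$ because $|A| < \kappa$ and $\kappa$ is uncountable. Therefore $\mathscr A \setminus S \neq \emptyset$, and we may fix $\lambda \in \mathscr A \setminus S$.

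Next I would pick any $z \in S_{X_\lambda}$ and define $y \in X$ by $y(\lambda) := z$ and $y(\alpha) := 0$ for $\alpha \neq \lambda$; then $y \in S_X$. For every $x \in A$ we have $x(\lambda) = 0$ since $\lambda \notin \supp(x)$, so $(x \pm y)(\lambda) = \pm z$ has norm $1$, while $(x \pm y)(\alpha) = x(\alpha)$ has norm $\le \|x\| = 1$ for all $\alpha \neq \lambda$. Consequently $\|x \pm y\| = 1$, which is exactly what is needed. This shows that $X$ is $\SQ{\kappa}$.

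I do not expect any genuine obstacle here: the argument is essentially the one in Example~\ref{example:countablysupported}, and the only point that deserves a moment's attention is the cardinality estimate $|S| < \kappa$, which really does require $\kappa$ to be uncountable. (For instance, $c_0$ itself is not $\SQ{\aleph_1}$, since no single $y \in S_{c_0}$ can satisfy $\|e_n \pm y\| \le 1$ for all $n$; so the hypothesis that $\mathscr A$ be uncountable cannot be dropped.)
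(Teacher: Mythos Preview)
Your proof is correct and follows essentially the same approach as the paper: both exploit that elements of $c_0(\mathscr A,X_\alpha)$ have countable support, bound $\left|\bigcup_{x\in A}\supp(x)\right|\le |A|\cdot\aleph_0<|\mathscr A|$, and then pick a unit vector supported on a single unused coordinate. Your write-up is in fact slightly more careful than the paper's (you make explicit the choice of $z\in S_{X_\lambda}$, whereas the paper writes $e_\beta$); the only quibble is the parenthetical aside, where the relevant failure for countable $\mathscr A$ would be $\SQ{\aleph_0}$ rather than $\SQ{\aleph_1}$.
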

\begin{proof}
    Fix $A\in\mathcal{P}_{<|\mathscr A|}(S_{c_0(\mathscr A,X_\alpha)})$. For every $x\in A$, $\supp(x)$ is at most countable, therefore, since $\mathscr A$ is uncountable,
    \begin{equation*}
        \left\vert\bigcup_{x\in A}\supp(x)\right\vert\le|A|\cdot\aleph_0<|\mathscr A|.
    \end{equation*}
    Find some $\beta\in\mathscr A\setminus\bigcup_{x\in A}\supp(x)$ and notice that $\|x+ e_\beta\|=1$ holds for every $x\in A$.
\end{proof}

\subsection{Tensor products}

In this subsection we give examples of projective and injective tensor products of Banach spaces which are transfinite (A)SQ. Our motivation for this are the known stability results of regular ASQ by taking tensor products coming from \cite{LLR2017, RZ2020}.

Let us begin with the projective tensor product, of which we briefly recall some notions. Recall that, given two Banach spaces $X$ and $Y$, the \textit{projective tensor product} of $X$ and $Y$, denoted by $X\pten Y$, is the completion of $X\otimes Y$ under the norm given by
\begin{equation*}
   \Vert u \Vert :=
   \inf\left\{
      \sum_{i=1}^n  \Vert x_i\Vert\Vert y_i\Vert
      : u=\sum_{i=1}^n x_i\otimes y_i
      \right\}.
\end{equation*}
It is known that $B_{X\pten Y}=\overline{\co}(B_X\otimes B_Y)
=\overline{\co}(S_X\otimes S_Y)$ \cite[Proposition~2.2]{Ryan2001}.
Moreover, given two Banach spaces $X$ and $Y$, it is well known that
$(X\pten Y)^*=L(X,Y^*)$ (see \cite{Ryan2001} for background on tensor products).

In \cite[Theorem~2.1]{RZ2020}, it was proved that, if $X$ and $Y$ are ASQ, then $X\pten Y$ is ASQ. The proof is based on averaging techniques in Banach spaces. In the following result we will obtain a transfinite version, which will give us more examples of transfinite (A)SQ spaces.

\begin{theorem}\label{theo:largeASQprojectensor}
    Let $\kappa$ be an uncountable cardinal. If $X$ and $Y$ are $\AandSQ{\kappa}$, then $X\pten Y$ is $\AandSQ{\kappa}$.
\end{theorem}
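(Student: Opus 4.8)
The plan is to adapt the proof of \cite[Theorem~2.1]{RZ2020} to the transfinite setting, using Proposition~\ref{prop:chartransfASQ} to reduce the statement to a condition about subspaces of density $<\kappa$. So fix a subspace $W\subset X\pten Y$ with $\dens(W)<\kappa$ and $\varepsilon>0$ (for the SQ case, $\varepsilon\geq 0$, but I will focus on the ASQ case and comment on the modifications afterwards). Since $W$ is spanned by a dense set of size $<\kappa$, and elementary tensors are dense in $X\pten Y$, each element of that dense set can be approximated by a finite sum $\sum_i x_i\otimes y_i$; collecting all the $x_i$'s and $y_i$'s appearing, we obtain subsets $A\subset X$ and $B\subset Y$, each of cardinality $<\kappa$, such that $W$ is contained (up to $\varepsilon$-perturbation, which is harmless) in $\overline{\spn}(A\otimes B)$. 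Passing to the closed linear spans, we get separable-in-the-transfinite-sense subspaces $X_0=\overline{\spn}(A)\subset X$ and $Y_0=\overline{\spn}(B)\subset Y$ with $\dens(X_0),\dens(Y_0)<\kappa$, and $W\subset X_0\pten Y_0$ isometrically (this last point needs a small argument, or one simply works with $X_0\otimes Y_0\subset X\pten Y$ directly and estimates norms there).

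Next, since $X$ is $\ASQ{\kappa}$, apply Proposition~\ref{prop:chartransfASQ} to the subspace $X_0$: there is $u\in S_X$ with $\|x+ru\|\leq(1+\delta)(\|x\|\vee|r|)$ for all $x\in X_0$, $r\in\mathbb R$, where $\delta$ is a small parameter to be fixed. Likewise there is $v\in S_Y$ with the analogous property for $Y_0$. The candidate witness element in $X\pten Y$ is $z:=u\otimes v$, which has norm $1$. The heart of the matter is then to show $\|w+\lambda z\|_{X\pten Y}\leq(1+\varepsilon)(\|w\|\vee|\lambda|)$ for $w\in W$, $\lambda\in\mathbb R$. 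Following \cite{RZ2020}, for $w=\sum_{i=1}^n x_i\otimes y_i$ (a near-optimal representation with $x_i\in X_0$, $y_i\in Y_0$ — again one must be careful that the near-optimal representation can be taken inside $X_0,Y_0$, or absorb an $\varepsilon$) one uses an averaging trick: consider signs $\theta\in\{-1,1\}^N$ and the elements $x_i^\theta$, $y_j^\theta$ obtained by adding $\pm\frac{1}{N}u$ (resp.\ $\pm\frac1N v$) copies, so that averaging over $\theta$ produces, in the limit $N\to\infty$, a representation of $w\pm\lambda u\otimes v$ (roughly $\lambda z$ appears as the ``diagonal'' cross term $\frac{\lambda}{N}\sum u\otimes v$) while the $\ASQ{\kappa}$ estimates control $\|x_i+\frac{r}{N}u\|$ and $\|y_j+\frac{s}{N}v\|$ uniformly. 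The combinatorial bookkeeping yields $\|w+\lambda z\|\leq (1+\delta)^2(\|w\|\vee|\lambda|)+(\text{error }\to 0)$, and choosing $\delta,N$ appropriately gives the bound with $1+\varepsilon$.

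The main obstacle, as in the original argument, is this averaging estimate: one must verify that the cross terms $x_i\otimes(\pm\frac1N v)$ and $(\pm\frac1N u)\otimes y_j$ cancel in the average over signs, that the ``doubly perturbed'' terms $(\pm\frac1N u)\otimes(\pm\frac1N v)$ are negligible, and that the surviving diagonal contributes exactly $\lambda u\otimes v$, all while keeping the norms of the perturbed factors under control via the $\ASQ{\kappa}$ inequalities. The transfinite aspect itself contributes no real difficulty beyond the initial reduction to $X_0,Y_0$: once we are inside subspaces of density $<\kappa$, Proposition~\ref{prop:chartransfASQ} supplies exactly the same tool (a single norming direction $u$, resp.\ $v$) that a finite-dimensional subspace would in the classical proof, so the averaging computation is verbatim that of \cite[Theorem~2.1]{RZ2020}. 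For the SQ case one takes $\delta=0$ throughout and, since in that setting the $\ASQ{\kappa}$ inequality becomes an exact $\|x+ru\|\leq\|x\|\vee|r|$, the only error terms are the $O(1/N)$ ones, which one removes by a final weak$^*$-compactness or direct limiting argument (or, alternatively, by noting that the SQ property of $X$ and $Y$ lets one choose the perturbations with no loss at all and pass to a cluster point in $(X\pten Y)^{**}$, then project back — but the cleanest route is simply to observe the $1/N$ errors vanish and invoke the ``$\|x\pm y\|=1$'' reformulation noted after Definition~\ref{defi:transfiniteASQ}).
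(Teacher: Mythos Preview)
Your approach is correct in outline but takes a more circuitous route than the paper. The paper works directly with the set-based Definition~\ref{defi:transfiniteASQ} rather than detouring through Proposition~\ref{prop:chartransfASQ}: given $A\in\mathcal P_{<\kappa}(S_{X\pten Y})$, it approximates each $u\in A$ to within $1/n$ by a convex combination $\sum_i \lambda_i^{u,n}\, x_i^{u,n}\otimes y_i^{u,n}$ of elementary tensors in $S_X\otimes S_Y$, collects \emph{all} the $x_i^{u,n}$'s (resp.\ $y_i^{u,n}$'s) over $u\in A$, $n\in\mathbb N$ and $i$ --- still a set of cardinality $<\kappa$ since $\kappa$ is uncountable --- and applies $\ASQ{\kappa}$ once to obtain a single $x\in S_X$ (resp.\ $y\in S_Y$) with $\|x_i^{u,n}\pm x\|\le(1+\varepsilon)^{1/2}$. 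Then \cite[Lemma~2.2]{RZ2020} gives $\|x_i^{u,n}\otimes y_i^{u,n}+x\otimes y\|\le 1+\varepsilon$ for every such tensor, and convexity plus the triangle inequality yield $\|u+x\otimes y\|\le 1+\varepsilon+1/n$ for all $n$.

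This sidesteps your concerns about whether near-optimal representations of $w\in W$ can be chosen inside $X_0,Y_0$, and it handles the SQ case without any limiting or bidual argument: since the \emph{same} $x,y$ work for every approximation level $n$, one simply lets $n\to\infty$ in the final bound (with $\varepsilon=0$). Your description of the averaging with signs and $N\to\infty$ is also more elaborate than what is actually needed: the content of \cite[Lemma~2.2]{RZ2020} is just the two-term identity $a\otimes b + x\otimes y=\tfrac12\bigl[(a+x)\otimes(b+y)+(a-x)\otimes(b-y)\bigr]$, not a large-$N$ average, so there are no $O(1/N)$ error terms to dispose of.
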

\begin{proof}

    Let us prove the ASQ case only, the other is similar. To this end, let $A\in\mathcal{P}_{<\kappa}(S_{X\pten Y})$ and $\varepsilon>0$. Since $S_{X\pten Y}=\cconv(S_X\otimes S_Y)$, for every $u\in A$ and $n\in\mathbb N$, we can find $m_n\in\mathbb N$, $\lambda^{u,n}_i\ge0$, $x^{u,n}_i\in S_X$ and $y^{u,n}_i\in S_Y$ for $i\in\{1,\ldots,m_n\}$ such that
    \begin{equation*}
        \left\|u-\sum_{i=1}^{m_n}\lambda^{u,n}_i x^{u,n}_i\otimes y^{u,n}_i\right\|\le1/n\text{ and }\sum_{i=1}^{m_n}\lambda^{u,n}_i=1.
    \end{equation*}
    Since $\kappa$ is uncountable, the sets $\{x^{u,n}_i:u\in A,n\in\mathbb N\text{ and }i\in\{1,\ldots,m_n\}\}$ and $\{y^{u,n}_i:u\in A,n\in\mathbb N\text{ and }i\in\{1,\ldots,m_n\}\}$ have cardinality $<\kappa$, therefore we can find $x\in S_X$ and $y\in S_Y$ satisfying
    \begin{equation*}
        \|x^{u,n}_i+x\|\le(1+\varepsilon)^{1/2}\text{ and }\|y^{u,n}_i+y\|\le(1+\varepsilon)^{1/2}
    \end{equation*}
    for all $u\in A,n\in\mathbb N$  and $i\in\{1,\ldots,m_n\}$.
    Thanks to \cite[Lemma~2.2]{RZ2020}, $\|x^{u,n}_i\otimes y^{u,n}_i+x\otimes y\|\le1+\varepsilon$ holds for every $u\in A$, $n\in\mathbb N$ and $i\in\{1,\ldots,m_n\}$. It is clear that
   \[\begin{split}
        \|u+x\otimes y\| & \le\left\|\sum_{i=1}^{m_n}\lambda^{u,n}_i ( x^{u,n}_i\otimes y^{u,n}_i+x\otimes y)\right\|+1/n\\ &  \le \sum_{i=1}^{m_n}\lambda^{u,n}_i \| x^{u,n}_i\otimes y^{u,n}_i+x\otimes y\|+1/n\le\\
        & \le(1+\varepsilon)\sum_{i=1}^{m_n}\lambda^{u,n}_i+1/n=1+\varepsilon+1/n
    \end{split}\]
    holds for every $u\in A$ and $n\in\mathbb N$. In other words, $\|x\otimes y+ u\|\le1+\varepsilon$ holds for every $u\in A$, and the proof is finished.
\end{proof}

\begin{remark}
In general, we cannot obtain that a projective tensor product $X\pten Y$ is $\ASQ{\kappa}$ if we only require one factor to be $\ASQ{\kappa}$. Indeed, if we take $X=c_0(\kappa)$ and $Y=\ell_p$ for $2<p<\infty$, we get, from \cite[Theorem 3.8]{LLR20172}, that $X\pten Y$ fails to be ASQ (it even contains a convex combination of slices of diameter smaller than 2).
\end{remark}

Now we turn our attention to when a space of operators can be transfinite ASQ, a study that will cover the injective tensor product too. Let $X$ and $Y$ be Banach spaces. Given an infinite cardinal $\kappa$, denote by
\begin{equation*}
    L_\kappa(Y,X):=\{T\in L(Y,X):\dens(T(Y))\le\kappa\}.
\end{equation*}
Using the ideas in \cite[Theorem~2.6]{LLR2017}, we get the following.

\begin{theorem}\label{theo:transfiASQinjective}
    Let $\lambda<\kappa$ be infinite cardinals, $X$ and $Y$ be non-trivial Banach spaces. Suppose that $X$ is $\AandSQ{\kappa}$.
    \begin{itemize}
        \item[(a)] If $H\subset L_\lambda(Y,X)$ is a closed subspace such that $Y^*\otimes X\subset H$, then $H$ is $\AandSQ{\kappa}$.
        \item[(b)] If $H\subset L_\lambda(Y^*,X)$ is a closed subspace such that $Y\otimes X\subset H$, then $H$ is $\AandSQ{\kappa}$.
    \end{itemize}
\end{theorem}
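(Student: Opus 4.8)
The plan is to adapt the argument of \cite[Theorem~2.6]{LLR2017} to the transfinite setting, exploiting the density hypothesis $\dens(T(Y)) \le \lambda < \kappa$ to reduce the problem, via Proposition~\ref{prop:chartransfASQ}, to finding a single direction-vector in $S_X$ that works for a subspace of $X$ of density $<\kappa$. I will prove (a) in detail and remark that (b) is obtained by replacing $Y$ with $Y^*$ throughout, since $Y \otimes X \subset Y^{**} \otimes X$ acts on $Y^*$ in the same way that $Y^* \otimes X$ acts on $Y$. Throughout, fix a normalized $y_0 \in S_Y$ (using non-triviality of $Y$).

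First I would set up the reduction. Let $Z \subset H$ be a subspace with $\dens(Z) < \kappa$; by Proposition~\ref{prop:chartransfASQ} it suffices (in the ASQ case; the SQ case is analogous with $\e = 0$) to produce, for each $\e > 0$, an operator $R \in S_H$ with $\|S + tR\| \le (1+\e)(\|S\| \vee |t|)$ for all $S \in Z$ and $t \in \mathbb R$. Consider the subspace $W := \overline{\spn}\bigcup_{S \in Z} S(Y) \subset X$. Since each $S(Y)$ has density $\le \lambda$ and $\dens(Z) < \kappa$, and since $\kappa$ is uncountable, $W$ has density $< \kappa$. Because $X$ is $\AandSQ{\kappa}$, Proposition~\ref{prop:chartransfASQ} gives $x \in S_X$ such that $\|w + sx\| \le (1+\e)(\|w\| \vee |s|)$ for all $w \in W$ and $s \in \mathbb R$. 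Now pick a norm-one functional $\phi \in S_{Y^*}$ with $\phi(y_0) = 1$ and define $R := \phi \otimes x \in Y^* \otimes X \subset H$; note $\|R\| = 1$, so $R \in S_H$.

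The key computation is then to estimate $\|S + tR\|$ for $S \in Z$ and $t \in \mathbb R$. For any $y \in B_Y$, we have $(S + tR)(y) = S(y) + t\phi(y)x$ with $S(y) \in W$ and $|t\phi(y)| \le |t|$, so the choice of $x$ yields
\begin{equation*}
    \|(S+tR)(y)\| \le (1+\e)\bigl(\|S(y)\| \vee |t\phi(y)|\bigr) \le (1+\e)\bigl(\|S\| \vee |t|\bigr).
\end{equation*}
Taking the supremum over $y \in B_Y$ gives $\|S + tR\| \le (1+\e)(\|S\| \vee |t|)$, which is exactly what Proposition~\ref{prop:chartransfASQ} requires; in the SQ case the same lines with $\e = 0$ give $\|S \pm R\| \le \|S\| \vee 1$. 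This completes (a), and (b) follows by the substitution indicated above, using that the canonical image of $Y$ in $Y^{**}$ still contains a norm-one element $y_0$ and a norm-one functional on $Y^*$ (an evaluation) that sends it to $1$.

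I expect the only genuine subtlety to be the cardinality bookkeeping in the reduction step — specifically, verifying that $W = \overline{\spn}\bigcup_{S \in Z} S(Y)$ really has density $< \kappa$. This uses that a union of $<\kappa$ many sets each of density $\le \lambda < \kappa$ has cardinality, hence density, $< \kappa$ (here uncountability of $\kappa$, or rather $\lambda \cdot \dens(Z) < \kappa$, is what matters), together with the fact that the closed linear span of a set does not increase its density character. Everything else is a direct transcription of the separable argument, the role of the ``almost square direction in $X$'' being played uniformly across all of $Z$ by the single vector $x$.
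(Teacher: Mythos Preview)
Your proposal is correct and follows essentially the same approach as the paper: both arguments collect the ranges of the given operators into a subspace of $X$ of density $<\kappa$, apply the $\AandSQ{\kappa}$ hypothesis on $X$ to find a single direction $x\in S_X$, and then take $R=\phi\otimes x$ for some $\phi\in S_{Y^*}$. The only cosmetic differences are that the paper works directly with a set $\mathscr T\in\mathcal P_{<\kappa}(S_H)$ rather than routing through Proposition~\ref{prop:chartransfASQ}, and it simply picks any $\phi\in S_{Y^*}$ (your normalization $\phi(y_0)=1$ is harmless but unnecessary).
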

\begin{proof}
    Let us prove only (a) in the ASQ case. Fix $\mathscr T\in\mathcal{P}_{<\kappa}(S_H)$ and $\varepsilon>0$. Consider the subspace
    \begin{equation*}
        \mathscr{T}(Y):=\bigcup_{T\in\mathscr T}T(Y)
    \end{equation*}
    and notice that $\dens(\mathscr T(Y))\le|\mathscr T|\cdot\lambda<\kappa$. By assumption, there exists $x\in S_X$ satisfying
    \begin{equation*}
        \|z+ r x\|\le(1+\varepsilon)(\|z\|\vee|r|)\text{ for every }z\in\mathscr T(Y)\text{ and }r\in\mathbb R.
    \end{equation*}
    Fix any $y^*\in S_{Y^*}$, then the element $y^*\otimes x\in S_H$ satisfies, for every $T\in\mathscr T$ and $y\in S_Y$,
    \begin{equation*}
        \|(T+y^*\otimes x)(y)\|=\|T(y)+y^*(y)\cdot x\|\le(1+\varepsilon)(\|T(y)\|\vee |y^*(y)|)\le 1+\varepsilon.
    \end{equation*}
    If we pass to the $\sup$ on the left hand-side, we conclude that $\|T+y^*\otimes x\|\le1+\varepsilon$ holds for every $T\in\mathscr T$, as desired.
\end{proof}
Recall that, given two Banach spaces $X$ and $Y$, the \textit{injective tensor product of $X$ and $Y$}, denoted by $X\iten Y$, is the closure of the space of finite-rank operators from $Y^*$ to $X$. Taking this into account, the following corollary is clear from Theorem \ref{theo:transfiASQinjective}.

\begin{corollary}\label{coro:injectivetensASQ}
    Let $\kappa$ be an uncountable cardinal, $X$ and $Y$ non-trivial Banach spaces. If $X$ is $\AandSQ{\kappa}$, then $X\iten Y$ is $\AandSQ{\kappa}$.
\end{corollary}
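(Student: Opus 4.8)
The plan is to realize $X\iten Y$ as a closed subspace of $L_{\aleph_0}(Y^*,X)$ that contains $Y\otimes X$, and then simply quote Theorem~\ref{theo:transfiASQinjective}(b) with $\lambda=\aleph_0$; this is legitimate precisely because $\kappa$ is uncountable, so that $\aleph_0<\kappa$. The hypotheses on non-triviality of $X$ and $Y$ are exactly those of Theorem~\ref{theo:transfiASQinjective}, so they carry over verbatim, and since the statement is phrased for $\AandSQ{\kappa}$ the ASQ and SQ cases are handled at once.

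First I would recall that, by the very definition adopted in the excerpt, $X\iten Y$ is the closure inside $L(Y^*,X)$ of the space of finite-rank operators from $Y^*$ to $X$; equivalently, it is the closed linear span of the rank-one operators $y\otimes x\colon y^*\mapsto y^*(y)x$ with $y\in Y$ and $x\in X$. In particular $Y\otimes X\subset X\iten Y$, so the only point requiring an argument before applying Theorem~\ref{theo:transfiASQinjective}(b) is that $X\iten Y\subset L_{\aleph_0}(Y^*,X)$, i.e.\ that every $T\in X\iten Y$ has separable range. This is routine: choosing finite-rank operators $T_n$ with $\|T_n-T\|\to 0$, each $T_n(Y^*)$ is finite-dimensional, hence separable, and for every $y^*\in Y^*$ one has $T(y^*)=\lim_n T_n(y^*)\in\overline{\bigcup_{n\in\mathbb N}T_n(Y^*)}$, a separable set since it is the closure of a countable union of separable sets; thus $\dens(T(Y^*))\le\aleph_0$.

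Putting this together, $X\iten Y$ is a closed subspace of $L_{\aleph_0}(Y^*,X)$ containing $Y\otimes X$, so Theorem~\ref{theo:transfiASQinjective}(b), applied with $\lambda=\aleph_0<\kappa$, yields that $X\iten Y$ is $\AandSQ{\kappa}$, as claimed. I do not expect any genuine obstacle here: the entire content is the identification of the injective tensor product as a subspace of operators with separable range, which is classical, after which the result is a direct citation of the preceding theorem. (One could even dispense with the range computation by noting that Theorem~\ref{theo:transfiASQinjective}(b) only asks for $H\subset L_\lambda(Y^*,X)$ for \emph{some} $\lambda<\kappa$, and $\lambda=\aleph_0$ always qualifies when $\kappa\ge\aleph_1$.)
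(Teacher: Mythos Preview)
Your proposal is correct and follows exactly the paper's approach: the paper simply observes that $X\iten Y$ is the closure of the finite-rank operators from $Y^*$ to $X$ and then declares the corollary ``clear from Theorem~\ref{theo:transfiASQinjective}'', while you spell out the routine verification that every such operator has separable range so that $X\iten Y\subset L_{\aleph_0}(Y^*,X)$ and part~(b) applies with $\lambda=\aleph_0<\kappa$. The only quibble is your final parenthetical, which suggests one could ``dispense with the range computation''; you cannot, since the inclusion $H\subset L_\lambda(Y^*,X)$ is precisely what needs to be checked, but this does not affect the main argument.
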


\subsection{Ultrapowers}

In this subsection we will provide examples of ultrapowers of Banach spaces which are ASQ. Our motivation comes from \cite{Hardtke2018} where, in our language, it is proved that the ultrapower of a Banach space $X$ is $\SQ{\aleph_0}$ if, and only if, $X$ is ASQ.

Let us start with a bit of notation. Given a family of Banach spaces $\{X_\alpha:\alpha\in\mathscr A\}$, for an infinite set $\mathscr A$, we denote 
$$\ell_\infty(\mathscr A,X_\alpha):=\left\{f\colon\mathscr A\longrightarrow \prod\limits_{\alpha\in\mathscr A} X_\alpha: f(\alpha)\in X_\alpha\ \forall \alpha\text{ and }\bigvee_{\alpha\in\mathscr A}\Vert f(\alpha)\Vert<\infty\right\}.$$
Given a non-principal ultrafilter $\mathcal U$ over $\mathscr A$, consider $c_{0,\mathcal U}(\mathscr A,X_\alpha):=\{f\in \ell_\infty(\mathscr A,X_\alpha): \lim_\mathcal U \Vert f(\alpha)\Vert=0\}$. The \textit{ultrapower of $\{X_\alpha:\alpha\in\mathscr A\}$ with respect to $\mathcal U$} is
the Banach space
$$(X_\alpha)_\mathcal U:=\ell_\infty(\mathscr A,X_\alpha)/c_{0,\mathcal U}(\mathscr A,X_\alpha).$$
We will naturally identify a bounded function $f\colon\mathscr A\longrightarrow \prod\limits_{\alpha\in\mathscr A} X_\alpha$ with the element $(f(\alpha))_{\alpha\in\mathscr A}$. In this way, we denote by $(x_\alpha)_{\alpha,\mathcal U}$ or simply by $(x_\alpha)_\mathcal U$, if no confusion is possible, the coset in $(X_\alpha)_\mathcal U$ given by $(x_\alpha)_{\alpha\in\mathscr A}+c_{0,\mathcal U}(\mathscr A,(X_\alpha))$.

From the definition of the quotient norm, it is not difficult to prove that $\Vert (x_\alpha)_\mathcal U\Vert=\lim_\mathcal U \Vert x_\alpha\Vert$ holds for every $(x_\alpha)_\mathcal U\in (X_\alpha)_\mathcal U$.

Now we are ready to prove the following theorem.

\begin{theorem}\label{theo:stabilityultraproduct}
    Let $\mathscr A$ be an infinite set and $\{X_\alpha:\alpha\in\mathscr A\}$ a family of $\ASQ{\kappa}$ spaces. If $\mathcal U$ is a $\aleph_1$-incomplete non-principal ultrafilter over $\mathscr A$, then $(X_\alpha)_\mathcal U$ is $\SQ{\kappa}$.
\end{theorem}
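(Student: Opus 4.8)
The plan is to mimic the classical argument of Hardtke for the case $\kappa=\aleph_0$, exploiting the $\aleph_1$-incompleteness of $\mathcal U$ to gain the extra "diagonal" room that upgrades $\ASQ{\kappa}$ in the factors to $\SQ{\kappa}$ in the ultraproduct. Fix $\mathscr B\in\mathcal P_{<\kappa}(S_{(X_\alpha)_\mathcal U})$; say $\mathscr B=\{(x^t_\alpha)_\mathcal U:t\in I\}$ with $|I|<\kappa$, and normalize so that $\|x^t_\alpha\|=1$ for every $\alpha$ and every $t$ (possible after adjusting on a set not in $\mathcal U$ and rescaling, which does not change the coset). Since $\mathcal U$ is $\aleph_1$-incomplete, pick a function $\varepsilon:\mathscr A\to(0,1)$ with $\varepsilon(\alpha)>0$ for all $\alpha$ and $\lim_\mathcal U\varepsilon(\alpha)=0$. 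The key point is that for each fixed $\alpha$ the set $\{x^t_\alpha:t\in I\}$ lies in $S_{X_\alpha}$ and has cardinality $<\kappa$, so since $X_\alpha$ is $\ASQ{\kappa}$ we can choose $y_\alpha\in S_{X_\alpha}$ with
\begin{equation*}
\|x^t_\alpha\pm y_\alpha\|\le 1+\varepsilon(\alpha)\qquad\text{for all }t\in I.
\end{equation*}

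Now set $y:=(y_\alpha)_\mathcal U$. Clearly $\|y\|=\lim_\mathcal U\|y_\alpha\|=1$, so $y\in S_{(X_\alpha)_\mathcal U}$. For each $t\in I$,
\begin{equation*}
\|(x^t_\alpha)_\mathcal U\pm y\|=\lim_\mathcal U\|x^t_\alpha\pm y_\alpha\|\le\lim_\mathcal U(1+\varepsilon(\alpha))=1,
\end{equation*}
so $\|(x^t_\alpha)_\mathcal U\pm y\|\le 1$ for all $t$, and by the remark following Definition~\ref{defi:transfiniteASQ} (for the SQ case one may equivalently require the $\pm$ inequality) this is exactly what is needed. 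Hence $(X_\alpha)_\mathcal U$ is $\SQ{\kappa}$.

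I expect the only genuinely delicate point to be the reduction to the normalized situation $\|x^t_\alpha\|=1$ for \emph{all} $\alpha$, simultaneously for all $t\in I$. For a single $t$ one has $\lim_\mathcal U\|x^t_\alpha\|=1$, so $\|x^t_\alpha\|\ge 1/2$ on a set $B_t\in\mathcal U$, and on $B_t$ one can replace $x^t_\alpha$ by $x^t_\alpha/\|x^t_\alpha\|$ and set it to be anything (e.g.\ an arbitrary fixed unit vector) off $B_t$, without changing the coset. Since $|I|<\kappa$ may be uncountable, one cannot intersect all the $B_t$ and stay in $\mathcal U$; but this is not needed — the normalization is carried out for each $t$ on its own set $B_t$, and the $\ASQ{\kappa}$ selection of $y_\alpha$ at a fixed $\alpha$ only ever sees the (at most $|I|<\kappa$ many) vectors $\{x^t_\alpha/\|x^t_\alpha\|:t\in I,\ \alpha\in B_t\}$ together with the arbitrary replacements, which is still a set of size $<\kappa$ in $S_{X_\alpha}$. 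So the argument goes through pointwise in $\alpha$ with no global intersection required; once this bookkeeping is set up, the rest is the short computation above.
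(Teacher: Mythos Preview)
Your proof is correct and follows essentially the same approach as the paper's: use $\aleph_1$-incompleteness to get a positive function $\varepsilon(\alpha)\to 0$ along $\mathcal U$, apply $\ASQ{\kappa}$ coordinatewise with tolerance $\varepsilon(\alpha)$, and pass to the ultralimit. Your discussion of the normalization step is in fact more careful than the paper's, which simply asserts ``without loss of generality''; your observation that the normalization can be done for each $t$ on its own $B_t\in\mathcal U$ without any global intersection is exactly right.
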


\begin{proof}
    Since $\mathcal U$ is $\aleph_1$-incomplete we can find a function $f:\mathscr A\longrightarrow \mathbb R$ so that $f(\alpha)>0$ holds for every $\alpha\in\mathscr A$ and so that $\lim_\mathcal U f(\alpha)=0$.
    
    Let us now prove that $(X_\alpha)_\mathcal U$ is $\SQ{\kappa}$. To this end, fix a set $A\in\mathcal{P}_{<\kappa}(S_{(X_\alpha)_\mathcal U})$. Without loss of generality we can assume that $\Vert x(\alpha)\Vert=1$ holds for every $\alpha\in\mathscr A$ and every $x\in A$. Now, since $X_\alpha$ is $\ASQ{\kappa}$ we can find, for every $\alpha\in\mathscr A$, an element $y_\alpha\in S_{X_\alpha}$ so that
    $$\Vert x(\alpha)+ y_\alpha\Vert\leq 1+f(\alpha)$$
    holds for every $x\in A$.

Now consider $(y_\alpha)_{\mathcal U}\in S_{(X_\alpha)_\mathcal U}$, and let us prove that it satisfies the desired inequality. To this end fix $x\in A$ and notice that
$$\Vert x+ (y_\alpha)_\mathcal U\Vert=\lim_{\mathcal U}\Vert x(\alpha)+ y_\alpha\Vert\leq \lim_\mathcal U (1+f(\alpha))=1,$$
as requested.
\end{proof}

It is natural, in view of what happen with the behaviour of ASQ in ultrapowers, to ask whether $(X_\alpha)_\mathcal U$ ASQ implies $X_\alpha$ ASQ for some $\alpha\in\mathscr A$. The following example shows that the answer is no.

\begin{example}\label{exam:ultraln}
    Let $\kappa$ be an infinite cardinal and set $X_n:=\ell_n(\kappa)$, where $n\in\mathbb{N}_{\geq 2}$. Let $\mathcal U$ be a non-principal ultrafilter over $\mathbb N$ and consider $X:=(X_n)_\mathcal U$. Let us prove that $X$ is $\SQ{\kappa}$ in spite of $X_n$ being reflexive for every $n\in\mathbb{N}_{\geq 2}$. Fix $A\in\mathcal{P}_{<\kappa}(S_X)$ and assume that $x(n)$ has norm one for each $x\in A$ and $n\in\mathbb{N}_{\geq 2}$.
    
    By the same argument as in Theorem \ref{theo:kappaASQnotconc0} we get elements $y_n\in S_{X_n}$ so that $\Vert x(n)+ y_n\Vert\leq 2^\frac{1}{n}$ holds for every $x\in A$ and $n\in\mathbb{N}_{\geq 2}$. It is not difficult to prove, as before, that
    $$\Vert x+ (y_n)_\mathcal U\Vert=\lim_\mathcal U \Vert x(n)+ y_n\Vert\leq \lim_\mathcal U 2^\frac{1}{n}=1$$
    holds for every $x\in A$, and the proof is finished.
    \end{example}

\section{Connections with other properties}\label{section:connectotherprop}

It is known that almost square Banach spaces have deep connections with other properties of the geometry of Banach spaces such as diameter two properties, octahedrality and the intersection property (see \cite{ALL2016}). The aim of the present section is to derive similar connections with transfinite counterparts of the abovementioned properties.

We will be specially interested in the connection between transfinite versions of almost squareness and octahedrality, because, as a consequence of our work, we will solve an open question from \cite{CLL2021}. In order to do so, let  us start with the following definition from \cite{CLL2021}.

\begin{definition}(see \cite[Definitions~2.3 and 5.3]{CLL2021})\label{def: kappa-octahedrality}
	Let $X$ be a Banach space and $\kappa$ an uncountable cardinal.
	\begin{itemize}
	    \item[(a)] We say that $X$ is \emph{$<\kappa$-octahedral} if, for every subspace $Y\subset X$ with $\dens(Y)<\kappa$ and $\varepsilon>0$, there exists $x\in S_X$ such that for all $r\in\mathbb{R}$ and $y\in Y$ we have $\|y+r x\|\ge(1-\varepsilon)(\|y\|+|r|)$.
	    \item[(b)] We say that $X$ fails the \textit{$(-1)$-BCP$_{<\kappa}$} if , for every subspace $Y\subset X$ with $\dens(Y)<\kappa$, there exists $x\in S_X$ such that for all $r\in\mathbb{R}$ and $y\in Y$ we have $\|y+r x\|=\|y\|+|r|$.
	\end{itemize}
	
	If $\kappa=\aleph_0$, analogous properties are defined by considering finite-dimensional subspaces $Y\subset X$ instead.
\end{definition}

It is known that if a Banach space $X$ is ASQ, then $X^*$ is octahedral \cite[Proposition 2.5]{ALL2016}. In order to solve the abovementioned question, our aim will be to establish a transfinite version of this result. We will perform this proof, however, from a more general principle using transfinite versions of the strong diameter two property.

\begin{definition}
    (see \cite[Definitions~2.11 and 2.12]{CLL2022}) Let $X$ be a Banach space and $\kappa$ an infinite cardinal.
    \begin{itemize}
        \item[(a)] We say that $X$ has the \textit{SD2P$_{<\kappa}$} if, for every $A\in\mathcal P_{<\kappa}(S_{X^*})$ and $\varepsilon>0$, there exist $B\subset S_X$ and $x^*\in S_{X^*}$ such that $x^*(x)\ge1-\varepsilon$ holds for all $x\in B$ and $B$ $(1-\varepsilon)$-norms $A$, that is
    \begin{equation*}
        \bigvee_{x\in B}y^*(x)\ge1-\varepsilon\text{ holds for every }y^*\in A.
    \end{equation*}
    \item[(b)] We say that $X$ has the \textit{1-ASD2P$_{<\kappa}$} if, for every $A\in\mathcal P_{<\kappa}(S_{X^*})$, there exist $B\subset S_X$ and $x^*\in S_{X^*}$ such that $x^*(x)=1$ holds for all $x\in B$ and $B$ is norming for $A$, that is $B$ $1$-norms $A$.
    \end{itemize}
\end{definition}

Recall that, for all infinite cardinal $\kappa$, a Banach space $X$ has the SD2P$_{<\kappa}$ if, and only if, $X^*$ is $<\kappa$-octahedral and that if $X$ has the 1-ASD2P$_{<\kappa}$, then $X^*$ fails the $(-1)$-BCP$_{<\kappa}$ \cite[Theorem~3.2 and Proposition~3.6]{CLL2022}.

\begin{proposition}\label{prop:connecASQSd2P}
    Let $X$ be a Banach space and $\kappa$ an infinite cardinal. If $X$ is $\ASQ{\kappa}$, then $X$ has the SD2P$_{<\kappa}$. Moreover, if $\kappa$ is uncountable and $X$ is $\SQ{\kappa}$, then $X$ has the 1-ASD2P$_{<\kappa}$.
\end{proposition}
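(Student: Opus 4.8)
The plan is to prove both implications by exhibiting, for a given finite/small set $A \subset S_{X^*}$, a suitable "norming slice" coming from the ASQ (resp.\ SQ) element. First I would fix $A \in \mathcal{P}_{<\kappa}(S_{X^*})$ and $\varepsilon > 0$, and for each $y^* \in A$ choose $z_{y^*} \in S_X$ with $y^*(z_{y^*}) > 1 - \varepsilon$ (or $> 1 - \varepsilon/2$, to leave room). Let $C := \{z_{y^*} : y^* \in A\} \in \mathcal{P}_{<\kappa}(S_X)$. Applying the $\ASQ{\kappa}$ property to $C$ (with a small parameter $\delta$ to be fixed), we get $x \in S_X$ with $\|z_{y^*} \pm x\| \le 1 + \delta$ for all $y^* \in A$. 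The candidate for the norming functional $x^*$ should be a norm-one functional attaining its norm (approximately) at $x$; so pick $x^* \in S_{X^*}$ with $x^*(x) > 1 - \delta$. The candidate for $B$ should be a set of unit vectors on which $x^*$ is nearly $1$ and which norms $A$; the natural choice is to renormalize the vectors $\frac{z_{y^*} + x}{\|z_{y^*} + x\|}$ (and possibly $\frac{-z_{y^*}+x}{\|{-z_{y^*}+x}\|}$), since these are close to $x$ hence nearly maximize $x^*$, while $y^*$ evaluated on $z_{y^*} + x$ is at least $y^*(z_{y^*}) - |y^*(x)| \ge (1-\varepsilon/2) - \text{(something)}$, which is not automatically large.

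The subtlety — and what I expect to be the main obstacle — is that $y^*(x)$ could be close to $-1$, which would spoil $y^*\!\left(\frac{z_{y^*}+x}{\|z_{y^*}+x\|}\right)$. To fix this, I would put into $B$ \emph{both} of the normalized vectors $b_{y^*}^{\pm} := \frac{z_{y^*} \pm x}{\|z_{y^*}\pm x\|}$; then whichever sign makes $y^*(x)$ non-negative gives $y^*(z_{y^*} \pm x) \ge y^*(z_{y^*}) \ge 1 - \varepsilon/2$, and since the denominator is at most $1 + \delta$, we get $\bigvee_{b \in B} y^*(b) \ge \frac{1 - \varepsilon/2}{1+\delta} \ge 1 - \varepsilon$ for $\delta$ small enough — so $B$ $(1-\varepsilon)$-norms $A$. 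Simultaneously, $x^*(b_{y^*}^{\pm}) \ge \frac{x^*(x) - \|z_{y^*}\pm x\|\cdot 0 \ldots}{\ldots}$; more carefully, $x^*(z_{y^*} \pm x) = x^*(z_{y^*}) \pm x^*(x)$, and here again a bad sign of $x^*(z_{y^*})$ could hurt. The cleanest route is to instead observe $x^*(z_{y^*} + x) + x^*(-z_{y^*}+x) = 2x^*(x) > 2 - 2\delta$ and each summand is $\le 1 + \delta$, forcing \emph{both} to exceed $1 - 3\delta$; hence $x^*(b_{y^*}^{\pm}) \ge \frac{1 - 3\delta}{1 + \delta} \ge 1 - \varepsilon$ for $\delta$ small. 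Thus $B := \{b_{y^*}^{\pm} : y^* \in A\}$ and this $x^*$ witness the SD2P$_{<\kappa}$.

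For the moreover part, assume $\kappa$ uncountable and $X$ is $\SQ{\kappa}$. By Proposition~\ref{prop:chartransfASQ} (or directly from the definition with $\varepsilon = 0$), the same construction runs with $\delta = 0$: I take $C \in \mathcal{P}_{<\kappa}(S_X)$ — but here I must be careful because to norm $A$ \emph{exactly} I cannot afford the $1 - \varepsilon$ slack in choosing $z_{y^*}$. Since $X$ is $\SQ{\kappa}$, it has the Daugavet-type rigidity, and in particular $S_{X^*}$ has no points attaining their norm only at $\pm$ a single vector; the right move is: for each $y^* \in A$ and each $n$, pick $z_{y^*,n} \in S_X$ with $y^*(z_{y^*,n}) > 1 - 1/n$, form the countable (still $<\kappa$ since $\kappa$ uncountable) set $C := \{z_{y^*,n}\}$, apply SQ to get $x \in S_X$ with $\|z \pm x\| = 1$ for all $z \in C$, and pick $x^* \in S_{X^*}$ — but now $x^*(x) = 1$ need not hold. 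Here I would instead invoke that $\SQ{\kappa}$ gives, via Proposition~\ref{prop:chartransfASQ}(ii) with $\e = 0$ applied to $Y = \overline{\spn}(C \cup \ker x^*\text{-slices})$, an $x$ with $\|y + rx\| \le \|y\| \vee |r|$; taking $r = 1$ and $y = z_{y^*,n}$ gives $\|z_{y^*,n} + x\| = 1$, and one checks $x^*(x) = 1$ for a suitable norming $x^*$ of the "square direction". Then $b_{y^*,n} := z_{y^*,n} + x \in S_X$ satisfies $x^*(b_{y^*,n}) = x^*(z_{y^*,n}) + 1$; matching this to $\le 1$ forces $x^*(z_{y^*,n}) \le 0$ and in fact, running the averaging argument above at the limit, $x^*(b_{y^*,n}) = 1$. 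Set $B := \{b_{y^*,n}\}$; then $x^*(b) = 1$ for all $b \in B$, and $\sup_{b \in B} y^*(b) \ge \sup_n y^*(z_{y^*,n} + x) \ge \sup_n (1 - 1/n + y^*(x))$. The last quantity equals $1 + y^*(x)$, which is $\ge 1$ provided $y^*(x) \ge 0$; if $y^*(x) < 0$ use $z_{y^*,n} - x$ instead (whose norm is also $1$), so in all cases $\sup_{b \in B} y^*(b) \ge 1$, i.e.\ $B$ $1$-norms $A$. This establishes the 1-ASD2P$_{<\kappa}$.

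\medskip

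\noindent\textbf{Main obstacle.} The delicate point throughout is the sign of $x^*(z_{y^*})$ (resp.\ of $y^*(x)$): naively the slice $\{b : x^*(b) \approx 1\}$ need not $(1-\varepsilon)$-norm $A$. The averaging trick $x^*(z+x) + x^*(-z+x) = 2x^*(x)$, combined with including both $b_{y^*}^{+}$ and $b_{y^*}^{-}$ in $B$, is what resolves it; in the SQ case the same trick must be pushed to the equality $\varepsilon = 0$ regime, which is exactly where Proposition~\ref{prop:chartransfASQ} (valid for uncountable $\kappa$, hence the hypothesis) is needed.
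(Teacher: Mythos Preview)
Your overall strategy---pick near-norming points $z_{y^*}$, apply the (A)SQ property to get $x$, take $x^*$ norming $x$, and build $B$ from the (normalised) vectors $z_{y^*}+x$---is exactly the paper's route. However, there is a genuine gap in your sign argument, and it appears in both halves.

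You define $b_{y^*}^{\pm}:=\dfrac{z_{y^*}\pm x}{\|z_{y^*}\pm x\|}$ and put \emph{both} into $B$. Your averaging identity $x^*(z_{y^*}+x)+x^*(-z_{y^*}+x)=2x^*(x)$ indeed forces $x^*(z_{y^*}+x)\ge 1-3\delta$ and $x^*(-z_{y^*}+x)\ge 1-3\delta$; but the second inequality says $x^*(z_{y^*}-x)\le -(1-3\delta)$, so $x^*(b_{y^*}^{-})\le \dfrac{-(1-3\delta)}{\|z_{y^*}-x\|}<0$, not $\ge 1-\varepsilon$. Thus $b_{y^*}^{-}$ violates the condition ``$x^*(b)\ge 1-\varepsilon$ for all $b\in B$'', and your $B$ does \emph{not} witness the SD2P$_{<\kappa}$. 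The same error recurs verbatim in the SQ part: if you throw $z_{y^*,n}-x$ into $B$, then $x^*(z_{y^*,n}-x)=-1$.

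The fix is to put only $b_{y^*}^{+}$ into $B$ and to handle the $A$-norming directly, using that the ASQ inequality holds for \emph{both} signs. From $\|z_{y^*}-x\|\le 1+\delta$ one gets $y^*(x)\ge y^*(z_{y^*})-(1+\delta)\ge -\varepsilon/2-\delta$, hence $y^*(z_{y^*}+x)\ge 1-\varepsilon-\delta$, so $y^*(b_{y^*}^{+})\ge\dfrac{1-\varepsilon-\delta}{1+\delta}$ (adjust the initial $\varepsilon/2$ to $\varepsilon/3$ to absorb the loss). This is precisely what the paper does: it first bounds $|y^*(x)|$ using both $\pm$ inequalities, then uses only the $+x$ translate. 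In the SQ case the same computation with $\delta=0$ gives $y^*(z_{y^*,n}+x)\ge 1-2/n$, so $\sup_n y^*(b_{y^*,n}^{+})=1$, and no sign-switching is needed. (Incidentally, your worry that ``$x^*(x)=1$ need not hold'' is unfounded: simply choose $x^*\in S_{X^*}$ attaining its norm at $x$ by Hahn--Banach; the detour through Proposition~\ref{prop:chartransfASQ} is unnecessary.)
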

\begin{proof}
    We begin the proof by noticing that, if $X$ is $\ASQ{\kappa}$, then it clearly satisfies a transfinite version of the \textit{symmetric strong diameter two property} \cite{ANP2019,HLLN2019}. Namely, for every $A\in\mathcal{P}_{<\kappa}(S_{X^*})$ and $\varepsilon>0$ there exist $B\subset S_X$ and $y\in S_X$ such that $B$ $(1-\varepsilon)$-norms $A$ and $y\pm B\subset(1+\varepsilon)B_X$.
    
    From this property we now deduce that $X$ has the SD2P$_{<\kappa}$. For this purpose, fix $A\in\mathcal{P}_{<\kappa}(S_{X^*})$ and $\varepsilon>0$. Find $B\subset S_X$ and $y\in S_X$ such that $B$ $(1-\varepsilon/3)$-norms $A$ and $y\pm B\subset (1+\varepsilon/3)B_X$.
    
    We claim that $y+B$ also $(1-\varepsilon)$-norms $A$. In fact, for every $x^*\in A$ we can find $x\in B$ such that $x^*(x)\ge1-\varepsilon/3$ and therefore
    \begin{equation*}
        1=\|x^*\|\ge\frac{x^*(x\pm y)}{1+\varepsilon/3}\ge\frac{1-\varepsilon/3\pm x^*(y)}{1+\varepsilon/3},
    \end{equation*}
    hence $|x^*(y)|\le2\varepsilon/3$. We conclude that $x^*(x+y)\ge1-\varepsilon$, that is the claim.
    
    In order to conclude, we need to find $x^*\in S_{X^*}$ such that $x^*(x+y)\ge1-\varepsilon$ holds for every $x\in A$. Any $x^*\in S_{X^*}$ that attains its norm at $y$ satisfies the desired condition, in fact, for every $x\in A$, we have that
    \begin{equation*}
        1=\|x^*\|\ge\frac{x^*(y\pm x)}{1+\varepsilon/3}=\frac{1\pm x^*(x)}{1+\varepsilon/3},
    \end{equation*}
    hence $|x^*(x)|\le\varepsilon/3$ and therefore $x^*(x+y)\ge1-\varepsilon/3$.
    
    The additional part follows by repeating the same proof with $\varepsilon=0$ and taking into account that, given any element $x^*\in S_{X^*}$, then the set $\{x^*\}$ can be normed using a countable set.
\end{proof}

\begin{remark}\label{rem:solutionCCL2021}
In \cite[Question 1]{CLL2021} it was asked whether every $<\kappa$-octahedral Banach space must contain an isomorphic copy of $\ell_1(\kappa)$. Observe that, by Proposition \ref{prop:connecASQSd2P}, the dual of the space exhibited in Example \ref{theo:kappaASQnotconc0} provides a negative answer for every infinite cardinal $\kappa$.
\end{remark}

\section{Parametric ASQ spaces}\label{section:generalisations}

In this last section we study a further generalization of ASQ spaces.

\begin{definition}\label{def: parametric ASQ}
 Let $X$ be a Banach space, $\kappa$ be a cardinal, and $r,s\in(0,1]$. We say that $X$ is \textit{$\rsSQ{r}{s}{\kappa}$} if, for every set $A\in\mathcal{P}_{<\kappa}(S_X)$, there exists $y\in S_X$ satisfying
 \begin{equation*}
     \|rx \pm sy\|\le1\text{ for every }x\in A.
 \end{equation*}
 We say that $X$ is \textit{$\rsSQ{<r}{s}{\kappa}$} if it is $\rsSQ{t}{s}{\kappa}$ for all $t\in(0,r)$ and similar meaning is given to being \textit{$\rsSQ{r}{<s}{\kappa}$}.
 As before, we put ``$\kappa$'' instead of ``$< \kappa$'' in the definitions above to mean the non-strict inequality on the cardinals.
\end{definition}

It is clear that being $\ASQ{\kappa}$ coincides with being $\rsSQ{<1}{<1}{\kappa}$ and that being $\SQ{\kappa}$ corresponds to being $\rsSQ{1}{1}{\kappa}$. 

\begin{remark}
Let $s\in (0,1]$. The quantitative version of almost squareness studied in \cite{OSZ2020}, named $s$-ASQ, corresponds to the space being $\rsSQ{<1}{<s}{\aleph_0}$.
\end{remark}

Before proceeding further, let us prove that every $\rsSQ{r}{s}{\kappa}$ space can be described through subspaces of density character $<\kappa$. 

\begin{lemma}\label{lem: (r,s)-SQ}
    Let $X$ be a Banach space, $x,y\in S_X$ and $r,s\in(0,1]$. If $\|rx+ sy\|\le1$, then $\|r'x+ s'y\|\le1$ holds for every $r'\in(0,r]$ and $s'\in(0,s]$.
\end{lemma}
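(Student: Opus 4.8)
The plan is to prove this by a simple convexity argument: the point $r'x + s'y$ lies in the convex hull of points of the form $\pm x$, $\pm y$, and $rx + sy$, all of which have norm at most $1$, so that $\|r'x+s'y\|\le1$ as well.

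First I would reduce to the case $s'=s$. Indeed, once we know that $\|rx+sy\|\le1$ implies $\|r'x+sy\|\le1$ for all $r'\in(0,r]$, we may apply the same implication with the roles of $x$ and $y$ (and of $r$ and $s$) interchanged to pass from $\|r'x+sy\|\le1$ to $\|r'x+s'y\|\le1$ for all $s'\in(0,s]$. So it suffices to fix $s'=s$ and show $\|r'x+sy\|\le1$ for $r'\in(0,r]$. The key observation is that if $r'\in(0,r]$ we can write
\begin{equation*}
    r'x+sy = \lambda\,(rx+sy) + (1-\lambda)\,\bigl(0\cdot x + sy\bigr) = \lambda\,(rx+sy) + (1-\lambda)\,(sy),
\end{equation*}
where $\lambda := r'/r \in (0,1]$. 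Then
\begin{equation*}
    \|r'x+sy\| \le \lambda\,\|rx+sy\| + (1-\lambda)\,\|sy\| \le \lambda\cdot 1 + (1-\lambda)\cdot s \le 1,
\end{equation*}
using $\|rx+sy\|\le1$ by hypothesis, $\|y\|=1$ so $\|sy\|=s\le1$, and $s\le1$. This settles the reduced case.

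For the full statement, given $r'\in(0,r]$ and $s'\in(0,s]$, the previous paragraph gives $\|r'x+sy\|\le1$; since $r'x\in S_{r'x/\|r'x\|}$-scaled, more precisely since $\tfrac{r'x}{\|r'x\|}\in S_X$ wait — rather, apply the reduced case again but now with the pair $(y,x)$ in place of $(x,y)$ and with $(s,r')$ in place of $(r,s)$: from $\|sy + r'x\|\le1$ we get $\|s'y+r'x\|\le1$ for every $s'\in(0,s]$, which is exactly $\|r'x+s'y\|\le1$. This completes the proof.

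I do not anticipate any serious obstacle here; the only mild subtlety is organizing the argument so that a single one-variable monotonicity claim is proved once and then applied twice with the variables swapped, rather than redoing the convexity estimate. One could alternatively give a direct one-step proof by writing $r'x+s'y$ as an explicit convex combination of $rx+sy$, $rx-sy$, $x$, $-x$, $y$, $-y$ with suitable nonnegative coefficients summing to $1$, but the two-step reduction above is cleaner and avoids bookkeeping with six coefficients.
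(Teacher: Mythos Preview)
Your proof is correct and follows essentially the same two-step reduction as the paper: first decrease $r$ to $r'$ keeping $s$ fixed, then decrease $s$ to $s'$. Your convex-combination identity $r'x+sy = \lambda(rx+sy)+(1-\lambda)sy$ with $\lambda=r'/r$ yields exactly the inequality $\|r'x+sy\|\le \tfrac{r'}{r}+s\bigl(1-\tfrac{r'}{r}\bigr)\le1$ that the paper obtains via a scaling-and-triangle-inequality computation, and your symmetric re-application for the second step is marginally cleaner than the paper's WLOG assumption $s/s'\le r/r'$ followed by a rescaling trick.
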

\begin{proof}
    Suppose without loss of generality that $s/s'\le r/r'$ and notice at first that
    \begin{align*}
        \|r'x+ sy\| & =\frac{r'}{r}\left\lVert rx+ \frac{rs}{r'}y\right\rVert\le\frac{r'}{r}\left(\|rx+ sy\|+s\left(\frac{r}{r'}-1\right)\right)\le\\
        & \le\frac{r'}{r}+s\left(1-\frac{r'}{r}\right)\le1.
    \end{align*}
    We just proved that $\|r'x+ sy\|\le1$ holds for every $r'\in(0,r]$. We now can conclude since
    \begin{equation*}
        \|r'x+ s'y\|=\frac{s'}{s}\left\lVert\frac{r's}{s'}x+ sy\right\rVert\le\frac{s'}{s}\le1,
    \end{equation*}
    where we used the first part of the proof together with the fact that $r's/s'\le r$.
\end{proof}

Notice that, thanks to Lemma~\ref{lem: (r,s)-SQ}, we can already conclude that $\rsSQ{r}{s}{\kappa}$ implies $\rsSQ{r'}{s'}{\kappa}$ whenever $r'\le r$ and $s'\le s$. 

\begin{theorem}\label{thm: rs-sq for subspaces}
    Let $X$ be a Banach space, $\kappa$ an uncountable cardinal and $r,s\in(0,1]$. Then $X$ is $\rsSQ{r}{s}{\kappa}$ if, and only if, for every subspace $Y\subset X$ with $\dens(Y)<\kappa$, there exists $x\in S_X$ satisfying
    \begin{equation*}
        \|ry+stx\|\le\|y\|\vee|t|\text{ for all }y\in Y\text{ and }t\in\mathbb R.
    \end{equation*}
\end{theorem}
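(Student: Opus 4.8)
The plan is to adapt the proof of Proposition~\ref{prop:chartransfASQ} (i.e.\ of \cite[Lemma~2.2]{ALL2016}) to the parametric setting. The ``if'' direction is immediate: given $A\in\mathcal P_{<\kappa}(S_X)$, apply the subspace condition to $Y:=\overline{\spn}(A)$, which has density $\le|A|\cdot\aleph_0<\kappa$ since $\kappa$ is uncountable, to obtain $x\in S_X$ with $\|ry+stx\|\le\|y\|\vee|t|$ for all $y\in Y$, $t\in\mathbb R$; specializing to $y=x'\in A$ and $t=1$ gives $\|rx'+sx\|\le1$, and the symmetric inequality $\|rx'-sx\|\le1$ follows by replacing $x$ with $-x$ in the whole argument (or by noting the subspace condition is insensitive to the sign of $t$). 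So $X$ is $\rsSQ{r}{s}{\kappa}$.

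For the ``only if'' direction, fix a subspace $Y\subset X$ with $\lambda:=\dens(Y)<\kappa$ and let $D=\{y_i:i<\lambda\}$ be a dense subset of $S_Y$. Since $\lambda<\kappa$ we may apply the $\rsSQ{r}{s}{\kappa}$ property to $A:=\{y_i:i<\lambda\}$ (note $|A|\le\lambda<\kappa$, and we may also throw in finitely many normalizations to handle the case $\lambda$ finite or zero — or simply observe the statement is trivial when $Y=\{0\}$), obtaining $x\in S_X$ with $\|ry_i\pm sx\|\le1$ for all $i<\lambda$. I would then argue, using Lemma~\ref{lem: (r,s)-SQ}, that this inequality propagates: for any $y\in S_Y$, choosing $y_i$ close to $y$ and using $\|ry\pm sx\|\le\|ry_i\pm sx\|+r\|y-y_i\|\le1+r\|y-y_i\|$ together with the homogeneity in the scalar factors, one gets $\|ry\pm sx\|\le1$ for all $y\in S_Y$ by a limiting argument (the density of $D$ kills the error). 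From here it remains to derive the mixed-norm inequality $\|ry+stx\|\le\|y\|\vee|t|$ for arbitrary $y\in Y$, $t\in\mathbb R$.

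The key step is this last deduction, which I expect to be the only place requiring genuine care (and which is exactly the content of \cite[Lemma~2.2]{ALL2016} in the classical case). By homogeneity we may assume $\|y\|\vee|t|=1$, and by symmetry (replacing $x$ by $-x$ and/or $y$ by $-y$) we may assume $t\ge0$, so either $\|y\|=1$ and $0\le t\le1$, or $0<\|y\|\le1$ and $t=1$. In the first case write $y$ with $\|y\|=1$; from $\|ry+sx\|\le1$ and $\|ry\|\le1$ we want $\|ry+stx\|\le1$ for $t\in[0,1]$, but $ry+stx=(1-t)(ry)+t(ry+sx)$ is a convex combination of two vectors of norm $\le1$, hence has norm $\le1$. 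In the second case, with $t=1$ and $\|y\|\le1$: if $y\ne0$ put $y'=y/\|y\|\in S_Y$, so $\|ry'+sx\|\le1$ and $\|sx\|\le s\le1$, and then $ry+sx=\|y\|(ry'+sx)+(1-\|y\|)(sx)$ is again a convex combination of vectors of norm $\le1$, so $\|ry+sx\|\le1$; the case $y=0$ is trivial since $\|sx\|\le1$. This handles all cases and completes the proof. The only subtlety worth flagging is the $\pm$ bookkeeping — one should state the sign-reduction once at the start (``by replacing $x$ with $-x$ and $y$ with $-y$ it suffices to treat $t\ge0$ and to prove the upper bound with a $+$ sign'') and then the convexity argument above covers everything.
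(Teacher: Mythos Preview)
Your proof is correct and follows the same overall strategy as the paper: pass from the set version to all of $S_Y$ by a density/continuity argument, then upgrade to the mixed inequality $\|ry+stx\|\le\|y\|\vee|t|$. The only difference is in this last step: the paper normalises by $\|y\|\vee t$ and then invokes Lemma~\ref{lem: (r,s)-SQ} (since the resulting coefficients $r\|y\|/(\|y\|\vee t)\le r$ and $st/(\|y\|\vee t)\le s$ fall under that lemma), whereas you give a direct convexity argument splitting into the two cases $\|y\|=1,\ t\in[0,1]$ and $t=1,\ \|y\|\le1$. Your convex-combination trick is arguably more transparent and self-contained (it essentially re-proves the needed special case of Lemma~\ref{lem: (r,s)-SQ} on the spot), while the paper's version is shorter because the work has already been packaged into that lemma. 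One minor point: your mention of Lemma~\ref{lem: (r,s)-SQ} in the density paragraph is unnecessary---the passage from the dense set $D$ to all of $S_Y$ is pure continuity of the norm, as your triangle-inequality estimate already shows.
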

\begin{proof}
    One implication is obvious. For the vice-versa, notice that we only need to prove the claim when $t\ge0$. For this purpose, fix a subspace $Y\subset X$ with $\dens(Y)<\kappa$ and find $x\in S_X$ such that
    \begin{equation*}
        \|ry+sx\|\le1\text{ holds for every }y\in S_Y.
    \end{equation*}
    Such a $x$ exists by a density argument. Fix $t\ge0$, $y\in Y$ and notice that, thanks to Lemma~\ref{lem: (r,s)-SQ},
    \begin{align*}
        \|ry+stx\| & =(\|y\|\vee t)\left\|\frac{r\|y\|}{\|y\|\vee t}\frac{y}{\|y\|}+\frac{st}{\|y\|\vee t}x\right\|\le(\|y\|\vee t).
    \end{align*}
\end{proof}

Now, let us point out some geometrical considerations.

\begin{lemma}\label{lem: SQ does not have extreme points nor is strictly convex}
Let $X$ be a Banach space.
\begin{enumerate}
    \item[(a)] If $X$ is $\rsSQ{1}{<1}{\aleph_0}$  (or just $\rsSQeq{1}{k}{1}$ for some $k \in (0,1]$), then $B_X$ cannot contain any extreme point.
    \item[(b)] If $X$ is $\rsSQ{<1}{1}{\aleph_0}$ (or just $\rsSQeq{k}{1}{1}$ for some $k \in (0,1]$), then $X$ is not strictly convex.
\end{enumerate}
\end{lemma}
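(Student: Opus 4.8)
The plan is to handle the two parts by producing, from the $\rsSQ{\cdot}{\cdot}{\aleph_0}$ hypothesis, a concrete pair of unit vectors that witnesses the failure of extremality (for (a)) or of strict convexity (for (b)). Since a single finite set $A = \{x\}$ already lies in $\mathcal P_{<\aleph_0}(S_X)$, both hypotheses immediately give, for any $x\in S_X$, a vector $y\in S_X$ with the relevant norm bound; this is the only structural input we need, so the parenthetical weaker hypotheses ``$\rsSQeq{1}{k}{1}$'' and ``$\rsSQeq{k}{1}{1}$'' are covered by the same argument (and by Lemma~\ref{lem: (r,s)-SQ} we may even reduce to a single convenient value of the parameter).

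For part (a), fix any $x\in S_X$; I want to show $x$ is not extreme in $B_X$. Applying the hypothesis with $A=\{x\}$, there is $y\in S_X$ with $\|x\pm ky\|\le1$ for some fixed $k\in(0,1]$ (in the $\rsSQ{1}{<1}{\aleph_0}$ case, pick any such $k<1$; in the $\rsSQeq{1}{k}{1}$ case this is literally the hypothesis). Then $u:=x+ky$ and $v:=x-ky$ both lie in $B_X$, they are distinct because $y\neq0$, and $x=\tfrac12(u+v)$. Hence $x$ is a proper midpoint of two distinct points of $B_X$, so $x\notin\ext{B_X}$. Since $x$ was arbitrary, $B_X$ has no extreme points.

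For part (b), fix any $x\in S_X$ and apply the hypothesis with $A=\{x\}$ in the form $\|kx\pm y\|\le1$ for a fixed $k\in(0,1]$ (in the $\rsSQ{<1}{1}{\aleph_0}$ case choose any $k<1$; in the $\rsSQeq{k}{1}{1}$ case this is the hypothesis). Set $u:=kx+y$ and $v:=-kx+y$, so $u,v\in B_X$ and $\tfrac12(u+v)=y\in S_X$, i.e. $\|\tfrac12(u+v)\|=1$. If $X$ were strictly convex this would force $u=v$, i.e. $kx=0$, contradicting $x\in S_X$ and $k>0$. (Equivalently, one checks $\|u\|\le1$, $\|v\|\le1$, $\|u+v\|=2$, with $u\ne v$, which is exactly the negation of strict convexity.) Hence $X$ is not strictly convex.

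I do not anticipate a genuine obstacle here: the whole content is choosing the right two vectors and reading off the definitions. The only mild care needed is bookkeeping about which parameter regime one is in — making sure that in the ``$<$'' versions one has a legitimate $k\in(0,1)$ available (true by definition) and that in the ``$\rsSQeq{\cdot}{\cdot}{1}$'' versions the single required instance is exactly what the hypothesis provides for the one-element set $A=\{x\}$ — together with noting that $y\ne 0$ so that $u\ne v$. One could also phrase both parts uniformly by observing that the hypothesis produces, for each $x\in S_X$, two distinct points of $B_X$ whose midpoint is a prescribed point of $S_X$ (namely $x$ itself in case (a), or the auxiliary vector $y$ in case (b)), which simultaneously kills extreme points and strict convexity; but splitting into the two stated cases keeps the exposition closest to the statement.
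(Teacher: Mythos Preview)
Your proof is correct and follows essentially the same approach as the paper: for (a) you exhibit $x$ as the midpoint of $x\pm ky\in B_X$, and for (b) you exhibit $y$ as the midpoint of $y\pm kx\in B_X$, exactly as the paper does (the paper merely absorbs the factor $k$ into the vector, writing $x\in kB_X\setminus\{0\}$ instead of $kx$ with $x\in S_X$). The bookkeeping about the parameter regimes and the use of a singleton $A$ are handled just as in the paper.
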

\begin{proof}
(a). Let $x\in S_X$. By our assumption we can find $y\in B_X\setminus\{0\}$ such that $\|x\pm y\|\le1$. Notice that
\[
x=\frac{x+y}{2}+\frac{x-y}{2}.
\]
Thus $x$ is a middle point of two distinct elements of $B_X$ and cannot be an extreme point.

(b). Let $x\in kB_X\setminus\{0\}$. By our assumption we can find $y\in S_X$ such that $\|x\pm y\|\le1$. Observe that
\begin{equation*}
    1=\|y\|=\left\lVert\frac{y+x}{2}+\frac{y-x}{2}\right\rVert.
\end{equation*}
From this we deduce, through a simple contradiction argument, that $\|x\pm y\|=1$ and that $y$ is a norm-1 element which is a middle point of two distinct norm-1 elements, thus proving the claim.
\end{proof}

Let us state a simple but useful observation about how $\rsSQ{r}{s}{\kappa}$ properties pass from a component to the $\infty$-sum.

\begin{proposition}\label{prop: (r,s)-SQ under ell_infty sum}
Let $X$ and $Y$ be non-trivial Banach spaces, $r, s \in (0, 1]$, and let $\kappa$ be a cardinal. If $X$ is $\rsSQ{r}{s}{\kappa}$, then $X \oplus_\infty Y$ is $\rsSQ{r}{s}{\kappa}$.
\end{proposition}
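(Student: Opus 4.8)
The plan is to verify directly from the definition of $\rsSQ{r}{s}{\kappa}$ that the witnessing element can be chosen inside the first summand, so that its image in the $\infty$-sum inherits all the required inequalities. Concretely, given a set $A \in \mathcal{P}_{<\kappa}(S_{X \oplus_\infty Y})$, I would pass to the projection onto the first coordinate. Each $u \in A$ has the form $u = (x_u, y_u)$ with $\|x_u\| \vee \|y_u\| = 1$; the subtlety is that $x_u$ need not lie on $S_X$, only in $B_X$. So the first step is to handle the normalization: replace $\{x_u : u \in A, \ x_u \neq 0\}$ by $\{x_u/\|x_u\| : u \in A, \ x_u \neq 0\}$, which is still a subset of $S_X$ of cardinality $<\kappa$ (discarding the $u$ with $x_u = 0$ costs nothing).

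Next I would apply the hypothesis that $X$ is $\rsSQ{r}{s}{\kappa}$ to this normalized set, obtaining $z \in S_X$ with $\|r\,(x_u/\|x_u\|) \pm s z\| \le 1$ for every relevant $u$. Invoking Lemma~\ref{lem: (r,s)-SQ} with $r' = r\|x_u\| \le r$ and $s' = s$, I get $\|r x_u \pm s z\| \le 1$ for every $u$ with $x_u \neq 0$; and for $u$ with $x_u = 0$ the inequality $\|r x_u \pm sz\| = \|sz\| = s \le 1$ is trivial. Then I would set $w := (z, 0) \in S_{X \oplus_\infty Y}$, which is a unit vector since $Y$ is non-trivial (this is where non-triviality of $Y$ is actually irrelevant — $z \in S_X$ already forces $\|w\| = 1$ — but non-triviality guarantees $X \oplus_\infty Y$ makes sense as stated). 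For each $u = (x_u, y_u) \in A$,
\[
\|r u \pm s w\| = \|(r x_u \pm s z,\ r y_u)\|_\infty = \|r x_u \pm s z\| \vee \|r y_u\| \le 1 \vee (r \cdot 1) = 1,
\]
using $r \le 1$ and $\|y_u\| \le 1$. This exhibits the required witness, so $X \oplus_\infty Y$ is $\rsSQ{r}{s}{\kappa}$.

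The only genuine point requiring care is the passage from $B_X$ to $S_X$ via the rescaling together with Lemma~\ref{lem: (r,s)-SQ}; everything else is a one-line computation with the $\infty$-norm. I do not expect any real obstacle, since the monotonicity lemma was evidently set up precisely to make such component-to-sum arguments go through. One should simply be mindful that the definition of $\rsSQ{r}{s}{\kappa}$ demands the $A$ lie on the unit sphere, which is why the normalization step cannot be skipped, and that the case $x_u = 0$ must be treated separately (trivially) rather than folded into the rescaling.
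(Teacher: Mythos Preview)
Your proposal is correct and follows essentially the same route as the paper's proof: normalize the first coordinates, apply the $\rsSQ{r}{s}{\kappa}$ hypothesis on $X$, use Lemma~\ref{lem: (r,s)-SQ} to descale, and take $w=(z,0)$ as the witness in the $\infty$-sum. The paper's argument is just a terser version of yours, with the $x_u=0$ case folded into the line ``even when $x_\gamma=0$'' rather than spelled out.
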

\begin{proof}
    Fix a set $\{x_\gamma \oplus_\infty y_\gamma\}_{\gamma \in \Gamma} \subset S_{X \oplus_\infty Y}$ with $|\Gamma| < \kappa$. Find $z \in S_X$ such that
  $\|\frac{rx_\gamma}{\|x_\gamma\|} \pm sz\| \leq 1$ for all $x_\gamma \neq 0$. By Lemma~\ref{lem: (r,s)-SQ} we have $\|rx_\gamma + sz\| \leq 1$ (even when $x_\gamma = 0$), so that $\|r(x_\gamma \oplus_\infty y_\gamma) + s(z \oplus_\infty 0) \| \leq 1$ for all $\gamma \in \Gamma$.
\end{proof}

Now let us show some first easy examples.

\begin{example}
    It is easy to check that $c_0$ is $\rsSQ{1}{<1}{\aleph_0}$. In fact, for every $x_1,\ldots,x_n\in S_{c_0}$ and $\varepsilon>0$ we can find $m\in\mathbb N$ such that $|x_i(m)|\le\varepsilon$ holds for $i\in\{1,\ldots,n\}$ and it is clear that $\|x_i+(1-\varepsilon)e_m\|\le1$.
    
    Even more is true, given any sequence $\{X_n:n\in\mathbb N\}$ of Banach spaces, $c_0(\mathbb N,X_n)$ is $\rsSQ{1}{<1}{\aleph_0}$. On the other hand it is trivial to verify that $c_0$ is not $\SQ{\aleph_0}$ by considering the element $x=\sum_{n=1}^\infty n^{-1}e_n\in S_{c_0}$.
\end{example}

Following the same ideas as in \cite[Theorem~2.5]{ANP2019}, the previous argument can be exploited also for proving more in general that \textit{somewhat regular subspaces} of $C_0(X)$ spaces, where $X$ is some non-compact locally compact and Hausdorff space, are $\rsSQ{1}{<1}{\aleph_0}$. 

With similar ideas we can slightly improve the renorming result stated in \cite[Theorem~2.3]{BLR2016}.

\begin{theorem}\label{thm: (r,s)-SQ renorming}
    A Banach space $X$ contains an isomorphic copy of $c_0$ if and only if it admits an equivalent $\rsSQ{1}{<1}{\aleph_0}$ norm.
\end{theorem}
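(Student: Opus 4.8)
The plan is to prove both implications, mirroring the structure of the classical result \cite[Theorem~2.3]{BLR2016} but paying attention to the sharper parameters $(1,<1)$.

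\emph{Necessity.} Suppose $X$ admits an equivalent $\rsSQ{1}{<1}{\aleph_0}$ norm; I want to extract an isomorphic copy of $c_0$. The idea is the standard inductive construction of a $c_0$-basis using the squareness property at finitely many points. Starting from any $x_1 \in S_X$, having chosen $x_1, \dots, x_n$, apply the $\rsSQ{1}{<1}{\aleph_0}$ property to the finite set $\{x_1, \dots, x_n\}$ (and $\varepsilon_n$ small, with $\sum_n \varepsilon_n < \infty$ or geometrically decaying) together with all relevant normalized linear combinations — concretely, one applies Theorem~\ref{thm: rs-sq for subspaces} with $r=1$, $s = 1-\varepsilon_n$ to the finite-dimensional subspace $Y_n = \spn\{x_1,\dots,x_n\}$, obtaining $x_{n+1} \in S_X$ with $\|y + (1-\varepsilon_n)t\,x_{n+1}\| \le \|y\| \vee |t|$ for all $y \in Y_n$ and $t \in \mathbb R$. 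A routine estimate then shows that the sequence $(x_n)$ is equivalent to the $c_0$-basis: the upper $c_0$-estimate $\|\sum a_i x_i\| \le C \max_i |a_i|$ comes from iterating the inequality above (absorbing the $\varepsilon_n$'s into the constant $C = \prod (1-\varepsilon_n)^{-1}$), and the lower estimate $\|\sum a_i x_i\| \ge \max_i |a_i|$ is immediate since each $x_i \in S_X$ and one can project, or more simply one reads it off the same inequality by isolating the top coordinate. This is the less delicate direction.

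\emph{Sufficiency.} Suppose $X$ contains an isomorphic copy of $c_0$. By \cite[Theorem~2.3]{BLR2016} we already know $X$ admits an equivalent ASQ norm, i.e. a $\rsSQ{<1}{<1}{\aleph_0}$ norm; the improvement to be made is pushing the first parameter up to $1$. The plan is to combine such an ASQ renorming with an $\ell_\infty$-type correction on the $c_0$-part. More precisely, inside $(X, |\cdot|)$ with $|\cdot|$ the ASQ norm, one has a subspace $Z$ isomorphic to $c_0$; since $c_0$ is $\rsSQ{1}{<1}{\aleph_0}$ (see the Example preceding this theorem, and its $C_0(X)$-generalization), the natural move is to renorm so that the $c_0$-directions genuinely witness the non-strict parameter $r=1$. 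One way: choose a projection-like decomposition or use an $\ell_\infty$-sum trick analogous to Proposition~\ref{prop:rigidASQdual} and Proposition~\ref{prop: (r,s)-SQ under ell_infty sum} — but $Z$ need not be complemented, so instead I would argue directly. Fix a normalized basic sequence $(z_n)$ in $Z$ equivalent to the $c_0$-basis, and define the new norm by $\nn{x} := |x| \vee \limsup_n |e_n^*(x)|$ (or a suitable bounded functional version), where $e_n^*$ are the coefficient functionals extended by Hahn--Banach; one checks this is equivalent, and that for a finite set $x_1, \dots, x_k \in S_{(X,\nn\cdot)}$ and $\varepsilon > 0$, picking $m$ large so that $|e_m^*(x_i)|$ is small for all $i$ and taking $y := (1-\varepsilon) z_m / \|z_m\|_{\text{new}}$ gives $\nn{x_i \pm y} \le 1$, because the $\limsup$ term controls the correction exactly. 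The element $y$ has norm $1$ up to the renormalization, and the parameter on $y$ stays strictly below $1$ while the parameter on $x$ is exactly $1$.

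\emph{Main obstacle.} The delicate point is the sufficiency direction: making the renormed norm simultaneously (i) equivalent to the original, (ii) genuinely $\rsSQ{1}{<1}{\aleph_0}$ — i.e. with the sharp non-strict first parameter, not merely ASQ — and (iii) well-defined despite $c_0$ not being complemented in $X$. The ASQ renorming from \cite{BLR2016} handles a weaker conclusion, so the extra work is precisely controlling the ``$r=1$'' endpoint, and the natural tool is to build into the norm a term ($\limsup$ of coordinate functionals, or an ultrafilter limit à la Example~\ref{ex: ell_infty rigid ASQ}) that vanishes on the correcting directions $z_m$ while dominating the perturbation $x \mapsto x \pm y$ on those directions. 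Verifying the equivalence of this norm and the precise inequality $\nn{x_i \pm y} \le 1$ — in particular that the $\limsup$-term does not spoil it for the finitely many $x_i$'s — is where the care is needed; the rest is bookkeeping.
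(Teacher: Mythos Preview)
The necessity direction is fine; the paper dispatches it in one line by noting that $\rsSQ{1}{<1}{\aleph_0}$ trivially implies ASQ, and ASQ spaces contain $c_0$ by \cite{ALL2016}. Your direct construction of a $c_0$-basis also works but is more effort than needed.

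The sufficiency direction has a genuine gap. Your proposed norm $\nn{x} = |x| \vee \limsup_n |e_n^*(x)|$ runs into two problems. First, for general $x_i \in X$ (not lying in the $c_0$-copy $Z$), there is no reason that $|e_m^*(x_i)|$ can be made small for some $m$: the Hahn--Banach extensions $e_m^* \in X^*$ need not tend weak$^*$ to zero on all of $X$, so the step ``picking $m$ large so that $|e_m^*(x_i)|$ is small for all $i$'' is unjustified. Second, and more fundamentally, even granting that, the inequality $|x_i \pm (1-\varepsilon)z_m| \le 1$ in the underlying ASQ norm $|\cdot|$ does not follow: the ASQ property furnishes \emph{some} witness direction, not the specific $z_m$ you have chosen, and your added $\limsup$ term does nothing to control the $|\cdot|$-part of the norm. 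You correctly identify the non-complementation of $c_0$ in $X$ as the obstacle, but the ad hoc correction does not overcome it.

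The paper's fix is to pass to the bidual. Since $c_0 \subset X$, one has $\ell_\infty \subset X^{**}$, and $\ell_\infty$ \emph{is} complemented in $X^{**}$ by injectivity. One equips $X^{**} = \ell_\infty \oplus Z$ with the $\ell_\infty$-sum of the ultrafilter norm from Example~\ref{ex: ell_infty rigid ASQ} and the given norm on $Z$; this makes $(X^{**},\nn\cdot')$ an $\SQ{\aleph_0}$ space by Corollary~\ref{cor: infty sum}. Restricting to $X$ gives the desired equivalent norm: the witnesses $e_m$ lie in $c_0 \subset X$, and the ultrafilter norm is engineered so that, writing $x_i = (u_i, z_i)$, one can choose $m$ with $|u_i(m) - \lim_{\mathcal U} u_i| < \varepsilon$ for all $i$, whence $\nn{x_i \pm (1-\varepsilon)e_m}' \le 1$ by direct computation. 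So the $\ell_\infty$-sum trick you dismissed does work --- just one level up, in $X^{**}$ rather than in $X$.
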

\begin{proof}
    Assume that $X$ contains a subspace isometric to $c_0$. Then there is a subspace $Z$ of $\Xss$ so that $\Xss=\ell_\infty \oplus Z$. Consider the norm $\nn\cdot$ on $\ell_\infty$ described in Example \ref{ex: ell_infty rigid ASQ}. Now, consider on $\Xss$ the equivalent norm $\nn\cdot'$ so that $(\Xss,\nn\cdot') =(\ell_\infty,\nn\cdot)\oplus_\infty Z$. By Corollary \ref{cor: infty sum}, $(\Xss,\nn\cdot')$ is $\SQ{\aleph_0}$ because $(\ell_\infty,\nn\cdot)$ is $\SQ{\aleph_0}$.
    
    Now let $x_1=(u_1,z_1),\dots,x_k=(u_k,z_k)\in S_X$ and $\varepsilon>0$. Keeping in mind the notation as in Example \ref{ex: ell_infty rigid ASQ}, find $n\in\mathbb{N}$ such that $1/n<\varepsilon$ and define $y:=(1-\varepsilon)e_m\in B_{c_0}$, where $m\in A_n$. Then a similar calculation as in the proof of \cite[Theorem~2.3]{BLR2016} yields that the element $(y,0)\in (1-\varepsilon)B_X$ and that $\|x_i+ (y,0)\|\leq 1$ holds for every $i\in\{1,\dots,k\}$. Hence, $X$ is $\rsSQ{1}{<1}{\aleph_0}$.
    
    For the converse recall that every Banach space with $\rsSQ{1}{<1}{\aleph_0}$ norm is ASQ and every ASQ space is known to contain $c_0$ by \cite{ALL2016}.
\end{proof}

Notice that, if $\kappa$ is an infinite cardinal and a Banach space $X$ is $\rsSQ{1}{<1}{\kappa}$, then a simple transfinite induction proves that $X$ contains an isomorphic copy of $c_0(\kappa)$.
Thus, in the case when $\kappa$ is uncountable, the condition $\rsSQ{1}{<1}{\kappa}$ is different from $\ASQ{\kappa}$ due to Example \ref{theo:kappaASQnotconc0}.

Before giving more examples, let us prove a variation of Theorem~\ref{thm: ell_infty sum} that we will need later.

\begin{theorem}\label{thm: (r,s)-SQ under ell_infty sum}
    Let $\{X_\alpha:\alpha\in\mathscr A\}$ be a family of Banach spaces and $\kappa$ an infinite cardinal. If for every $r\in(0,1)$ there are infinitely many $\alpha\in\mathscr A$ such that $X_\alpha$ is $\rsSQ{r}{r}{\kappa}$, then $\ell_\infty(X_\alpha)$ is $\rsSQ{<1}{1}{\kappa}$.
\end{theorem}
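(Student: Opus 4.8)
The plan is to argue in direct analogy with the first half of Theorem~\ref{thm: ell_infty sum}, but tracking the parameters carefully so that the loss incurred in the $\ell_\infty$-sum only degrades the first parameter $r$ (not the second, $s=1$). Fix $r\in(0,1)$; I need to produce, for every set $A\in\mathcal P_{<\kappa}(S_{\ell_\infty(X_\alpha)})$, an element $y\in S_{\ell_\infty(X_\alpha)}$ with $\|rz\pm y\|_\infty\le1$ for all $z\in A$. By hypothesis there are infinitely many $\alpha$ with $X_\alpha$ being $\rsSQ{r}{r}{\kappa}$; pick one such $\beta$. Since $|A|<\kappa$, the set $\{z(\beta)/\|z(\beta)\| : z\in A,\ z(\beta)\neq0\}$ lies in $\mathcal P_{<\kappa}(S_{X_\beta})$, so there is $w\in S_{X_\beta}$ with $\|r\,z(\beta)/\|z(\beta)\| \pm r w\|\le1$ for all such $z$. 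By Lemma~\ref{lem: (r,s)-SQ} (using $\|z(\beta)\|\le1$, i.e. scaling the first coefficient down) this gives $\|r\,z(\beta)\pm rw\|\le1$, hence a fortiori $\|r\,z(\beta)\pm w\|$ is what I must control; here is the key point I need to get right.

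The subtlety is that I want the \emph{second} coordinate multiplier to be $1$, not $r$. So instead I should use the $\rsSQ{r}{r}{\kappa}$ property directly in the form: there is $w\in S_{X_\beta}$ with $\|r\,u \pm r\,w\|\le1$ for all $u$ in the relevant subset of $S_{X_\beta}$; equivalently $\|u\pm w\|\le 1/r$. Setting $y:=w\cdot e_\beta\in S_{\ell_\infty(X_\alpha)}$, for any $z\in A$ I compute
\begin{equation*}
    \|rz\pm y\|_\infty = \bigvee_{\alpha\neq\beta}\|r\,z(\alpha)\| \ \vee\ \|r\,z(\beta)\pm w\|.
\end{equation*}
The first term is $\le r<1$. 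For the second, if $z(\beta)=0$ it equals $\|w\|=1$; if $z(\beta)\neq0$, write $z(\beta)=\|z(\beta)\|u$ with $u\in S_{X_\beta}$, and then $\|r\|z(\beta)\|u \pm w\|$; since $r\|z(\beta)\|\le r$, Lemma~\ref{lem: (r,s)-SQ} applied to the known inequality $\|r u\pm r w\|\le1$ (scaling \emph{the first summand} from coefficient $r$ down to $r\|z(\beta)\|$, keeping the second at... ) — here I must be careful, because Lemma~\ref{lem: (r,s)-SQ} only lets me scale both coefficients \emph{down}, and I need the second to stay at $1\ge r$. The cleanest fix is: from $\|ru\pm rw\|\le1$ and Lemma~\ref{lem: (r,s)-SQ} with $r'=r\|z(\beta)\|\le r$ and $s'=r$, get $\|r\|z(\beta)\|u \pm r w\|\le1$, i.e. $\|r z(\beta)\pm rw\|\le1$; this is not yet $\|rz(\beta)\pm w\|\le1$. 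So the honest route is to keep the second multiplier equal to $r$ throughout and conclude $\ell_\infty(X_\alpha)$ is $\rsSQ{r}{r}{\kappa}$ for every $r<1$ — but that is exactly $\rsSQ{<1}{<1}{\kappa}=\ASQ{\kappa}$, weaker than claimed.

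Therefore the real argument must be sharper, and I expect the main obstacle to be precisely extracting the second coordinate $=1$. The resolution: use \emph{two} indices. Fix $r<1$; choose $r<r'<1$; pick \emph{two} distinct indices $\beta_1,\beta_2$ with $X_{\beta_i}$ being $\rsSQ{r'}{r'}{\kappa}$. In $X_{\beta_1}$ find $w_1$ with $\|r' u\pm r' w_1\|\le1$ for the relevant $u$'s, and similarly $w_2\in X_{\beta_2}$. Now define $y$ to have $y(\beta_1)=w_1$ and $y(\beta_2)=w_2$ and $y(\alpha)=0$ otherwise, so $\|y\|_\infty=1$. For $z\in A$ and each $i$: on the $\beta_i$ coordinate, $\|r\,z(\beta_i)\pm w_i\|$; since $\|w_i\|=1$ and $\|r z(\beta_i)\|\le r < r'$, I can invoke the $\rsSQ{r'}{r'}{\kappa}$ inequality rewritten as $\|u\pm w_i\|\le 1/r'$ for $u\in r' B_{X_{\beta_i}}$, applied to $u=r\,z(\beta_i)$ (note $\|u\|\le r\le r'$, so $u\in r'B_{X_{\beta_i}}$), giving $\|r z(\beta_i)\pm w_i\|\le 1/r'$... still not $\le1$. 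Hmm — so in fact I suspect the correct statement and proof genuinely need the hypothesis ``for every $r\in(0,1)$ there are infinitely many $\alpha$'' in a way that exploits summing over \emph{many} such coordinates, à la the $c_0$-type argument, where spreading a norm-$1$ perturbation across coordinates with coefficients $\to 1$ is what lets the second multiplier reach $1$ in the limit. So the real plan: given $r<1$ and $A$, choose a strictly increasing sequence $r=r_0<r_1<r_2<\cdots\to1$, and infinitely many distinct indices $\beta_k$ with $X_{\beta_k}$ being $\rsSQ{r_k}{r_k}{\kappa}$; in each $X_{\beta_k}$ find $w_k\in S_{X_{\beta_k}}$ witnessing the property for the (size-$<\kappa$) set $\{z(\beta_k)/\|z(\beta_k)\|:z\in A\}$; set $y(\beta_k):=(\text{something like } \tfrac{1}{r_k}\cdot)$ — no, $y$ must have norm $1$, so put $y(\beta_k):=w_k$ and accept $\|y\|_\infty=1$. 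Then on coordinate $\beta_k$, $\|rz(\beta_k)\pm w_k\|\le \|r_k z(\beta_k)\pm r_k w_k\|/1\cdot(\cdots)$ — the point being that as $k\to\infty$, $r_k\to1$, and $\|r_k\,z(\beta_k)\pm r_k w_k\|\le1$ gives, after dividing by $r_k$, $\|z(\beta_k)\pm w_k\|\le 1/r_k\to1$; but we only have finitely many ``bad'' coordinates where $1/r_k$ exceeds $1+\varepsilon$, and on a $c_0$/$\ell_\infty$ sum the $\sup$ is attained from the good tail only if $\|z(\beta_k)\|$ is small there — which is NOT automatic in $\ell_\infty$.

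So, to keep this proposal honest and bounded: I would follow the \textbf{two-index trick combined with Lemma~\ref{lem: (r,s)-SQ} scaling}, checking that it suffices to get $\rsSQ{r}{1}{\kappa}$ for each $r<1$ — which is the claim. Concretely: fix $r<1$, pick $r<t<1$ and an index $\beta$ with $X_\beta$ being $\rsSQ{t}{t}{\kappa}$; find $w\in S_{X_\beta}$ with $\|t\,u\pm t\,w\|\le1$ for all relevant $u\in S_{X_\beta}$; set $y:=w\cdot e_\beta$, $\|y\|_\infty=1$. For $z\in A$: the off-$\beta$ terms give $\le r<1$; on $\beta$, I claim $\|r\,z(\beta)\pm w\|\le1$. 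Indeed $\|t\,u\pm t\,w\|\le1$ means the midpoint estimate; by Lemma~\ref{lem: (r,s)-SQ}, since $r\|z(\beta)\|\le r<t$, we get $\|r\,z(\beta)\pm t\,w\|\le1$, and — this is the crux — I need $\|r\,z(\beta)\pm w\|\le1$ with the full $w$. Since this does not follow from Lemma~\ref{lem: (r,s)-SQ} alone, the author's proof must use something extra about $\ell_\infty$-sums, most plausibly: \emph{modify $w$ to have a coordinate equal to $1$ times a bump that is supported where $z(\beta)$ is small} — i.e. replicate the $C_0(X)$/$c_0$ argument \emph{inside} $X_\beta$ via its $\rsSQ{}{}{}$-structure. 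Given the length constraints, my plan is to \textbf{set up the reduction to producing, for each $r<1$, a norm-one $y$ with $\|rz\pm y\|_\infty\le1$; localize everything to a single well-chosen coordinate $\beta$ with a strong enough parameter $t$ close to $1$; and then close the gap using Lemma~\ref{lem: (r,s)-SQ} together with the observation that $\|r\,z(\beta)\| \le r$, iterating the $\rsSQ{t}{t}{\kappa}$ property along an infinite set of indices if a single application is insufficient} — with the main obstacle, as emphasized above, being the passage from the symmetric parameter pair $(t,t)$ to the asymmetric $(r,1)$, which I expect to require the "infinitely many $\alpha$" hypothesis in an essential, $c_0$-flavoured way.
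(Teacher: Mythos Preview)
You correctly diagnose that a single-coordinate argument only yields $\rsSQ{r}{r}{\kappa}$ and that the ``infinitely many $\alpha$'' hypothesis must be used to spread $y$ over an infinite family of coordinates with parameters approaching $1$. You even write down the right skeleton: pick $r<r_1<r_2<\cdots\to1$, distinct indices $\beta_k$ with $X_{\beta_k}$ being $\rsSQ{r_k}{r_k}{\kappa}$, and witnesses $w_k\in S_{X_{\beta_k}}$. The gap is in how you assemble $y$. You set $y(\beta_k):=w_k$ and then try to bound $\|r\,z(\beta_k)\pm w_k\|$, which indeed fails. The trick you are missing --- and which you briefly contemplate but discard by looking in the wrong direction --- is to scale \emph{down}: put $y(\beta_k):=r_k w_k$. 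Then $\|y\|_\infty=\sup_k r_k=1$ (this is exactly where $r_k\to1$ is used), while on coordinate $\beta_k$ you need only
\[
\|r\,z(\beta_k)\pm r_k w_k\|\le 1,
\]
which follows directly from $\|r_k u\pm r_k w_k\|\le1$ (for $u=z(\beta_k)/\|z(\beta_k)\|$) and Lemma~\ref{lem: (r,s)-SQ} with $r'=r\|z(\beta_k)\|\le r<r_k$ and $s'=r_k$. Off the $\beta_k$'s, $\|r\,z(\alpha)\|\le r<1$. Hence $\|rz\pm y\|_\infty\le1$ for all $z\in A$.

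This is precisely the paper's argument (phrased there with a continuum of parameters $s\in(r,1)$ rather than a sequence, but the content is identical). Your attempts to push the second coefficient all the way to $1$ on a \emph{fixed} coordinate via Lemma~\ref{lem: (r,s)-SQ} cannot work, since that lemma only lets you decrease coefficients; the point is that you never need the second coefficient to equal $1$ on any single coordinate --- only in the supremum.
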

\begin{proof}
    Fix $r\in(0,1)$ and $A\in\mathcal{P}_{<\kappa}(S_{\ell_\infty(X_\alpha)})$. For every $s\in(r,1)$ we can find $\alpha(s)\in\mathscr A$ and $y_s\in S_{X_{\alpha(s)}}$ satisfying
    \begin{equation*}
        \|sx(\alpha(s))+ sy_s\|\le1\text{ for all }x\in A.
    \end{equation*}
    By our assumption, we can assume that, if $s\not=s'$, then $\alpha(s)\not=\alpha(s')$. Define $y\in S_{\ell_\infty(X_\alpha)}$ by
    \begin{equation*}
        y(\alpha):=\bigg\{\begin{array}{ll}
            sy_s & \text{if }\alpha=\alpha(s)\text{ for some }s\in(r,1),\\
            0 & \text{otherwise.}
        \end{array}
    \end{equation*}
    Thanks to Lemma~\ref{lem: (r,s)-SQ}, we conclude that
    \begin{equation*}
        \|rx+ y\|_\infty=1\vee\bigvee_{s\in(r,1)}\|rx(\alpha(s))+ sy_s\|\}\le1
    \end{equation*}
    holds for every $x\in A$.
\end{proof}

Eventually we can present more examples of $\rsSQ{r}{s}{\kappa}$ spaces that will also show that these properties are actually distinct from the regular $\AandSQ{\kappa}$.

\begin{example}
    There exists a Banach space $X$ which is $M$-embedded and strictly convex \cite[P. 168]{hww}. Therefore $X$ is ASQ \cite[Corollary~4.3]{ALL2016}, however it is not $\rsSQ{1}{<1}{\aleph_0}$ nor $\rsSQ{<1}{1}{\aleph_0}$ by Lemma~\ref{lem: SQ does not have extreme points nor is strictly convex}.
\end{example}

\begin{example}
    In the proof of Example \ref{theo:kappaASQnotconc0}, it is shown that the Banach space $\ell_n(\kappa)$ is $\rsSQ{2^{-1/n}}{2^{-1/n}}{\kappa}$. Thus, $X:=\ell_\infty(\ell_n(\kappa))$ is $\rsSQ{<1}{1}{\kappa}$, thanks to Theorem~\ref{thm: (r,s)-SQ under ell_infty sum}, but it is not $\rsSQ{1}{<1}{\aleph_0}$, by Lemma~\ref{lem: SQ does not have extreme points nor is strictly convex}, since it is a dual space.
\end{example}

\section*{Acknowledgment}
The research of A. Avil\'es and A. Rueda Zoca was supported by MTM2017-86182-P (funded by MCIN/AEI/10.13039/501100011033 and ``ERDF A way of making Europe'') and by Fundaci\'{o}n S\'{e}neca [20797/PI/18].

The research of S. Ciaci, J. Langemets and A. Lissitsin was supported by the Estonian Research Council grants (PSG487) and (PRG1901).

The research of A. Rueda Zoca was also supported by MCIN/AEI/10.13039/\\ 501100011033 (Spain) Grant PGC2018-093794-B-I00, by Junta de Andaluc\'ia Grant A-FQM-484-UGR18 and by Junta de Andaluc\'ia Grant FQM-0185.


\begin{thebibliography}{99}

\bibitem {AHT2020} T.~A. Abrahamsen, P. H\'ajek and S. Troyanski, \textit{Almost square dual Banach spaces}, J. Math. Anal. Appl. \textbf{487}(2) (2020), 124003.

\bibitem {ANP2019} T.~A. Abrahamsen, O. Nygaard and M. P\~oldvere, \textit{New applications of extremely regular functions spaces}, Pacific J. Math. \textbf{301}, 2 (2019), 385--394.

\bibitem {ALL2016} T.~A. Abrahamsen, J. Langemets and V. Lima, \textit{Almost square Banach spaces}, J. Math. Anal. Appl. \textbf{434}(2) (2016), 1549--1565.

\bibitem{AH1974} P.~Alexandroff and H.~Hoff, \textit{Topologie I}, Springer, Berlin-Heidelberg-New York, 1974.

\bibitem {AMRZ2021} A. Avil\'es, G. Mart\'inez-Cervantes and A. Rueda Zoca, \textit{A renorming characterization of Banach spaces containing $\ell_1(\kappa)$}, accepted in Publ. M\'ath.

\bibitem {AMR2022} A. Avil\'es, G. Mart\'inez-Cervantes and A. Rueda Zoca, \textit{Banach spaces containing $c_0$ and elements in the fourth dual}, J. Math. Anal. Appl. \textbf{508}(2) (2022), 125911.


\bibitem {BLR2016} J. Becerra Guerrero, G. L\'opez-P\'erez and A. Rueda Zoca, \textit{Some results on almost square Banach spaces}, J. Math. Anal. Appl. \textbf{438}(2) (2016), 1030--1040.

\bibitem{CLL2021} S. Ciaci, J. Langemets and A. Lissitsin, \textit{A characterization of Banach spaces containing $\ell_1(\kappa)$  via ball-covering properties}, accepted in Isr. J. Math.

\bibitem{CLL2022} S. Ciaci, J. Langemets and A. Lissitsin, \textit{Attaining strong diameter two properties for infinite cardinals}, J. Math. Anal. Appl. \textbf{513}(1) (2022), 126185.

\bibitem {FHHMPZ2001} M. Fabian, P. Habala, P. H\'ajek, V. Montesinos, J. Pelant and  V.
Zizler, Functional Analysis and Infinite-dimensional Geometry, {\it CM Books in Mathematics.} Springer-Verlag. Berlin 2001.

\bibitem {GarKu2011} J. Garbuli\'nska and W. Kubis, \textit{Remarks on Gurarii spaces}, Extracta Math. \textbf{26} (2011), 235--269.

\bibitem {GLRZ2017} L. Garc\'ia-Lirola and A. Rueda Zoca, \textit{Unconditional almost squareness and applications to spaces of Lipschitz functions}, J. Math. Anal. Appl. \textbf{451}(1) (2017), 117--131.

\bibitem {Gode1989} G.~Godefroy, \textit{Metric characterization of first Baire class linear forms and octahedral norms,} Studia Math. {\bf 95} (1989), 1-15.

\bibitem {G1966} V.~I.~Gurarii, \textit{Spaces of universal placement, isotropic spaces and a problem of Mazur on rotations of Banach spaces (Russian),} Sibirsk. Math. Ž. {\bf 7} (1966), 1002-1013.

\bibitem {HLLN2019} R. Haller, J. Langemets, V. Lima and R. Nadel, \textit{Symmetric strong diameter two property}, Mediterr. J. Math. 16, No. 2, Paper No. 35, 17 p. (2019).

\bibitem {hww} P.~Harmand,  D.~Werner and W.~Werner, {\it $M$-ideals in Banach
spaces  and Banach  algebras}, Lecture  Notes  in Math. {\bf
1547}, Springer-Verlag, Berlin-Heidelberg, 1993.

\bibitem {Hardtke2018} J. D. Hardtke, \textit{Summands in locally almost square and
locally octahedral spaces}, Acta Comment. Univ. Tartu. Math. \textbf{22}, 1 (2018), 149--162.

\bibitem {LLR2017} J. Langemets, V. Lima and A. Rueda Zoca, \textit{Almost square and octahedral norms in tensor product of Banach spaces}, RACSAM \textbf{111} (2017), 841--853.

\bibitem{LLR20172} J. Langemets, V. Lima and A. Rueda Zoca, \textit{Octahedral norms in tensor products of Banach spaces}, Quarterly J. Math. \textbf{68} (2017), 1247--1260.

\bibitem {LiTza} J. Lindenstrauss and L. Tzafriri, \textit{Classical Banach spaces I}, Springer Verlag. Berlin (1977).

\bibitem{OSZ2020} E. Oja, N. Saealle and I. Zolk, \textit{Quantitative versions of almost squareness and diameter 2 properties}, Acta Comment. Univ. Tartu. Math. \textbf{24}, No. 1 (2020), 131--145.



\bibitem {Rosenthal1970} H. P. Rosenthal, \textit{On injective Banach spaces and the spaces $L_\infty(\mu)$ for finite measures $\mu$}, Acta Math. \textbf{124} (1970), 205--247. 

\bibitem {RZ2020} A. Rueda Zoca, \textit{Almost squareness and strong diameter two property in tensor product spaces}, RACSAM \textbf{114} (2020), article 84.

\bibitem {Ryan2001} R.~A.~Ryan, \textit{Introduction to tensor products of Banach spaces}, Springer Monographs in Mathematics, Springer-Verlag, London, 2002.

\bibitem {Tuk1940} J. W. Tukey, \textit{Convergence and uniformity in topology}, Pricenton Univ. Press IX 90 p, 1940.

\end{thebibliography}
\end{document}